\pgfplotsset{compat=1.13}
 	\definecolor{debianred}{rgb}{0.84, 0.04, 0.33}
\DeclareMathOperator*{\argmax}{argmax}
\DeclareMathOperator*{\argmin}{argmin}
\newtheorem{remark}{Remark}%[section]  
\newcommand{\trace}{{\rm trace}}
\newcommand{\spalm}{sPALM }
\newcommand{\spalmtt}{sPALM-DL-TT }
\newcommand{\spalmdot}{sPALM}
\newcommand{\spalmttdot}{sPALM-DL-TT}
\newcommand{\DL}{DL}
\newcommand{\palmtt}{PALM-DL-TT }
\newcommand{\palmttdot}{PALM-DL-TT}
\newcommand{\bipalmdot}{iPALMbt}
\definecolor{greenf}{rgb}{.054, .5, .005}
\begin{document}

\bibliographystyle{siam}

\title{%A spectral proximal alternating linearized minimization (\spalmdot) algorithm for tensor-train based dictionary learning
%**** OPPURE  **** 
A spectral PALM algorithm for matrix and tensor-train based Dictionary Learning
%A Tensor-Train Dictionary Learning algorithm based on
% Spectral Proximal Alternating Linearized Minimization
%Tensor-train based dictionary learning and a spectral PALM algorithm 
\thanks{Version of \today. The first and the third authors are members of the INdAM Research Group GNCS that partially supported this  work.} }{}
%\title{A new tensor-train PALM algorithm for dictionary learning}
\date{\today}
\author{Domitilla Brandoni\thanks{Dipartimento di Matematica, AM$^2$,
        Alma Mater Studiorum - Universit\`a di Bologna, Piazza di Porta San Donato 5, 
        40126 Bologna, Italia. Emails: 
{\tt  \{domitilla.brandoni2,margherita.porcelli,valeria.simoncini\}@unibo.it}}
\and Margherita Porcelli$^{\dagger,}$\thanks{ISTI--CNR, Via Moruzzi 1, Pisa, Italia}
\and Valeria Simoncini$^{\dagger,}$\thanks{IMATI-CNR, Via Ferrata 5/A, Pavia, Italia}}

\maketitle

\begin{abstract} 
Dictionary Learning (DL) is one of the leading
sparsity promoting techniques in the context of image classification, where the ``dictionary'' matrix
$D$ of images and the sparse matrix $X$ are determined so as to represent a redundant image dataset. 
The resulting constrained optimization problem is nonconvex and non-smooth, providing several
computational challenges for its solution.
To preserve multidimensional data features, various tensor DL formulations 
have been introduced, adding to the problem complexity. 
We develop a new alternating algorithm for the solution of
the DL problem both in the matrix and tensor frameworks; in the latter case
a new formulation based on Tensor-Train decompositions is also proposed.
The new method belongs to the 
Proximal Alternating Linearized Minimization (PALM) algorithmic family, with the inclusion of
second order information to enhance efficiency. We discuss a rigorous convergence analysis, and
report on the new method performance on the image classification of several benchmark datasets.

\end{abstract}

\begin{keywords}
Dictionary Learning, Tensor-Train decomposition, Nonconvex-nonsmooth minimization, Alternating minimization,  Proximal gradient algorithms, Spectral gradient method
\end{keywords}

\begin{AMS}
 65F30, 15A23,  15A69, 65K05, 90C06.
\end{AMS}

%%%%%%%%%%%%%%%%%%%%%%%%%
\section{Introduction}
Sparse representation of data has become an important tool in a variety of 
contexts such as image classification and compression,
observation denoising and equation solving. In the context of image classification, 
Dictionary Learning (DL) is among the leading
sparsity promoting techniques, and we refer to \cite{Dumitrescu2018} and  \cite{Mairal2014a}
for an overview of all applications of dictionary learning in image processing in general.

Given an array of data $Y$, DL aims to find a matrix $D$ called dictionary and a sparse 
matrix $X$ to represent $Y$ as $Y \approx D X$, under certain constraints on
$D$ and $X$. 
Each column of the dictionary $D$ can be seen as a compressed representation of the 
redundant information contained in $Y$. 
A distinctive feature of this approximate factorization
is that the number of columns of the dictionary $D$, called atoms, is greater than the number 
of rows. A rich number of atoms may have advantages in terms of invariance 
of the dictionary under specific geometric transformations, such as translations or rotations, so 
called ``shiftability'' \cite{shiftability2,Simoncelli1992}.

While originally the problem was formulated so that only $X$ was an unknown data, 
the seminal {works \cite{Aharon2006,olshausen1997}} introduced a different perspective, where both $D$ and $X$ are 
to be determined. From a computational view point, this nonlinearity creates big challenges, especially
when constraints are included. 
To cope with nonlinearity, most DL algorithms rely on {\it alternating} optimization:
the minimization in $X$ is known as \emph{sparse coding} and it is often performed
via the so called Orthogonal Matching Pursuit (OMP), while the minimization in $D$ 
is known as \emph{dictionary update} and various approaches have been proposed. 
%For the sparse coding step, a common approach is 
%Orthogonal Matching Pursuit (OMP) which, despite the lack of convergence guarantees, has good practical behaviour. 
%We now give a brief overview of some popular dictionary update techniques. 
The Method of Optimal Directions (\cite{MOD}) computes the dictionary by setting to 
zero the partial gradient in $D$ of the objective function. 
The K-SVD (\cite{Aharon2006}) updates each dictionary atom separately by 
using the Singular Value Decomposition to sequentially obtain a series of best rank-one approximations in
each mode.
%of an error matrix. 
%Notice that in K-SVD the non-zero elements of the matrix $X$ are 
%also updated in the dictionary update step. 
To avoid the computation of several 
singular value decompositions, the Approximate K-SVD (AK-SVD) was 
proposed in \cite{AKSVD}. 
%There, in the dictionary update step, the atoms of 
%$D$ and the  sparse matrix $X$ are not updated simultaneously as in K-SVD, but sequentially. 
An exhaustive overview of the matrix DL algorithms can be found in \cite{Dumitrescu2018}.

%  \begin{figure}
%   \centering
%   \includegraphics[width=.45\textwidth]{} %qui 1705
%  %\caption{Dictionary $\mathcal{D}$ after 10 iterations for the Fashion MNIST with $k=1024$ using PALM. 
%   \caption{Dictionary $\mathcal{D}$ after 10 iterations for the Fashion MNIST using PALM. 
%\label{fig:palm_final_dict_fashion1}  {\color{magenta} togliere}}
%   \end{figure}

Due to the increased need to analyze multidimensional data, various tensor formulations of
the \DL\ problem have been introduced with the aim of preserving data structure and feature
heterogeneity.   For instance, Tensor SVD (tSVD), the Canonical Polydiac Decomposition (CPD) and the High Order SVD
have been used, see, e.g., \cite{Duan2012,Roemer2014,Zhang2015}.
%In \cite{Zhang2015,Duan2012}  and  \cite{Roemer2014} the K-SVD approach is extended to the multidimentional setting using the Tensor SVD (tSVD), the Canonical Polydiac Decomposition (CPD) and the High Order SVD, respectively. 
{Each atom of the dictionary is updated separately using the corresponding tensor decomposition.}
In \cite{Zubair2013} the tensor data $\mathcal{Y}$ is modeled using a sparse Tucker Decomposition, the sparsity 
is imposed on the core tensor $\mathcal{X}$, while the dictionary is replaced by the factor matrices in the 
Tucker decomposition. The decomposition is determined by an alternating iteration using a gradient descent method, where
the sparse tensor $\mathcal{X}$ is obtained using a greedy algorithm, named Tensor Orthogonal Matching Pursuit 
(TOMP). 
% In \cite{Roemer2014} the signal is modeled using a sparse core $\mathcal{X}$ and a separable dictionary $D=D_1\otimes D_2$. The dictionary update scheme follows the K-SVD and each column of $D$ is updated separately using the HOSVD. Notice that since the HOSVD is non optimal in a least squares sense, the algorithm does not have the same guarantee of K-SVD.
Another interesting and more recent tensor formulation can be found in \cite{Dantas2019} where the dictionary is represented as a sum of Kronecker products of smaller subdictionaries. 
This is equivalent to imposing a CP structure on the corresponding dictionary tensor. 
{An analogous Kronecker structure was considered in \cite{bajwa1,bajwa2} where the authors 
show that,  given a sufficient number of noisy training columns in $Y$, and 
under certain conditions on the problem parameters, 
the structured dictionary is locally identifiable with high probability.

%also provided sufficient conditions on several DL parameters, 
%{\color{red}e.g., the number of samples}, that guarantee the local recovery of the structured dictionary with high probability.}

%In this setting, the dictionary update is done by first gathering together all the subdictionaries corresponding to the same mode and then using an alerternating minimization procedure.
%
%
% In  \cite{Duan2012} the K-CPD is introduced. The goal is to represent a data tensor using as a sparse combination of rank one tensors called atom tensors. The coefficients of this combination are stored in a sparse matrix $X$. Both the atoms $\mathcal{D}_k$  and the sparse coefficients are updated using the CPD and following the scheme of the K-SVD. Notice that if the underlying dictionary $\mathcal{D}$ does not have any particular structure, the number of atom tensors needed to approximate $\mathcal{D}$ can be arbitrarily large and thus many CP Decompositions have to be computed at each step of the ALS procedure.
% In \cite{Zhang2015} the data tensor $\mathcal{Y}$ is factorized in a tensor dictionary $\mathcal{D}$ and a sparse tensor $\mathcal{X}$ through the K-TSVD algorithm. The dictionary update follows the K-SVD scheme where instead of computing the truncated SVD of the error matrices, the t-SVD of the error tensors is considered.
%
%{\st{
%Unfortunately,
%the tensor-based minimization procedures available in the literature are not usually supported
%by a theoretical analysis providing convergence guarantees,
%thus limiting this methodology to a purely exploratory setting.}
{When nonconvex and non-smooth models for the DL problem are considered, the tensor-based minimization procedures available in the literature are not usually supported by a theoretical analysis providing global convergence guarantees,
thus limiting this methodology to a purely exploratory setting.}

We advance the algorithmic developments by proposing 
a new {nonconvex and non-smooth} 
theoretically  founded alternating algorithm for the
matrix and tensor formulations of the DL problem. In the tensor case, we
propose a novel application of the Tensor-Train (TT) decomposition leading to a
multi-dimensional dictionary, called TT-DL.
%, together with a new theoretically  founded alternating algorithm for its solution.
%{\color{blue} aggiungere qualcosa su perche' proprio la TT, troncamento?}
% with a stopping criterion strictly associated to the obtained approximation.
%
%To overcome the lack of convergence while keeping a tensor approach, 
%We propose a new tensor formulation of the \DL\ problem 
%using a (truncated) Tensor-Train (TT) Decomposition of $\mathcal{D}$ 
%together with new  theoretically founded alternating algorithm for its solution.
% 
The new algorithm belongs to the class of proximal alternating linearized minimization (PALM) algorithms and is named spectral PALM (\spalmdot).  The original PALM algorithm was presented in \cite{Bolte2014} and several variants were later proposed \cite{Udell,Gao2020,TITAN,iPALM}
and applied to various problems, including standard matrix based  
\DL\ problem formulations as shown in \cite{Bao2014,palmDL,Zhu2020}.
These methods, called PALM-type algorithms, perform a gradient step in each variable
 and take into account the constraints using proximal maps. 
Remarkably, convergence to critical points is ensured for these algorithms for a large class of 
nonconvex non-smooth problems where the variable vector is split into several blocks of variables. 
PALM-type algorithms are generally based on the use  of Lipschitz constants %(or their approximation) 
that may be unavailable or hard to estimate, possibly leading to low performance. 
Our new method \spalm differs in the choice of the stepsize as it implicitly embeds second 
order information  of the objective function, so as to take longer steps than 
using the more conservative Lipschitz-based stepsizes. 
As a consequence, \spalm is a PALM-type algorithm with the same 
convergence properties but with an expected better practical performance. 
{The use of enriched proximal steps for solving composite optimization problems, that is problems where the objective is the sum of
a smooth and a non-smooth function, is not new, see, e.g., \cite{Sparsa} and the 
more recent advances using inexact variable metric in \cite{Bonettini2} and Newton-like steps  
in \cite{kanzow2021}. The novelty of our approach consists in constructing spectral stepsizes
that use information from the previous iteration of the alternating algorithm allowing to relate them with local second order information of the smooth part of the objective function.
In particular, when \spalm is applied to \DL, we provide new explicit bounds for the spectral stepsizes that generalize known results for strictly convex quadratic problems \cite{Zanni}.}

In this work we show that PALM-type algorithms (including \spalmdot) can be naturally
 applied to the proposed Tensor-Train \DL\ formulation, yielding 
convergent schemes. Moreover, in the matrix and tensor setting, 
the proposed spectral variant yields better performance in the solution of \DL\ image classification problems.
To the best of our knowledge, these are the first {globally} convergent tensor-based algorithms in the \DL\ literature.

{%\color{blue} 
This paper is organized as follows. In section~\ref{sec:dl_problem} we describe the matrix DL problem and its use in image classification, and provide new formulations based of the TT decomposition of the dictionary.
Then we illustrate the main algorithmic framework of this work, i.e. the PALM methods, in Section \ref{sec:palm}, and propose sPALM  in Section  \ref{sec:sPALM} where the theoretical analysis is carried-out. 
The application of PALM algorithms to the proposed DL formulation is described in Section~\ref{sec:palm_todl} where their convergence is also proved.  Section~\ref{sec:numerical_expe} is devoted to numerical tests and conclusions are drawn in Section~\ref{sec:conclusions}.  
}

% To have convergence guarantees also for similar nonsmooth and nonconvex problem, in \cite{Bolte2014} a proximal alternating linearized minimization (PALM) algorithm is proposed. A gradient step is performed in each variable and then the constraints are taken into account by using proximal maps. This algorithm is able to ensure the convergence to critical points under mild assumptions on the objective function. Thus it can be applied in a dictionary learning setting as explained in \cite{Li2018} and \cite{Bao2014}.
% The overmentioned PALM algorithm uses the Lipschitz constants of the smooth part of the objective function for the gradient step. However, in several applications, these constants are not the best choice either because they are hard to compute or they slow down the convergence process. Thus we propose the PALM-LS algorithm where a different choice of the constants for gradient step is performed and we extend the convergence results of PALM to our setting. 

%%%%%%%%%%%%%%%%%%%%%%%%%%%%%%%%%%%%%%%%%%%%%%%%%1

 \subsection*{Notation}
%Let $a_i$ indicate the $i$th column of a matrix $A$, norma 2 vettoriale}
Vectors  and scalars are denoted by lowercase letters $(a,b,\dots)$, matrices are 
denoted by capital letters $(A, B,\dots)$ and higher-order tensors by calligraphic letters $(\mathcal{A}, \mathcal{B},\cdots)$. 
Capital Greek letters (e.g., $\Omega, \Gamma, \Theta$) indicate specific sets of real matrices.
%such as the set of real matrices with unit norm columns or with a given number of non-zero elements per column.
%
%{ tolto il vec che non si usa piu'}%The ``vec'' operator stacks the columns of a given $n\times m$ matrix one after the other, yielding a vector of $nm$ components.
In the following, $\|\cdot \|_F$ indicates the matrix Frobenius norm, {$\|A\|_2$} denotes the matrix norm induced by
the Euclidean vector norm, while $\|x\|_0$ denotes the zero-norm of a vector or tensor, defined as the number of its nonzero entries.
%The zero norm of any order tensor, defined as the number of its non-zero entries, is indicated as $\|\cdot \|_0$.\\The Euclidean norm and the Frobenius norm are denoted as $\|\cdot \|_2$ and $\|\cdot \|_F$ respectively.
%defined as $\displaystyle\|x\|_2=\sqrt{\sum_{i=1}^n x_i^2}$.

\section{The \DL\ problem and a new tensor formulation}
\label{sec:dl_problem}
For a {\it training} set of data, Dictionary Learning consists of solving a two variable optimization problem. 
We are interested in the following formulation:
Given the array %a dataset of $p$ $n$-dimensional training signals 
$Y = [y_1, \dots ,y_p] \in \mathbb{R}^{n\times p} $
and a sparsity threshold $\tau>0$, solve
% \DL\ can be formulated as the following minimization problem
\begin{equation} \label{eq:dl_matrix}
 \min_{D,X}\|Y-DX\|_F^2 \hspace{5mm} s.t.  \hspace{5mm} D\in\Omega_{n,k}, \hspace{3mm} X\in\Gamma_{k,p}^{(\tau)} ,
\end{equation}
where $k>n$ and
\begin{eqnarray}
\Omega_{n,k} &=& \{ D=[d_1, \ldots, d_k] \in \mathbb{R}^{n\times k}:\ \|d_i\|_2 = 1, i = 1,\dots,k\}
 \label{eq:omega} \\
\Gamma_{k,p}^{(\tau)} &=& 
\{ X=[x_1, \ldots, x_p] \in \mathbb{R}^{k\times p}: \|x_i\|_0 \le \tau,  i = 1,\dots,p \}. \label{eq:gamma} 
\end{eqnarray}
Other formulations, not necessarily equivalent, are possible \cite{Bao2014,Fan21,Mairal2014a}. 
%The dictionary learning problem is inherently difficult due to the nonconvex constraints on the variables. A complete analysis on its properties can be found in \cite{Dumitrescu2018} and  \cite{Mairal2014a}. (dire qualcosa di piu')
This minimization problem is NP-hard, see e.g. \cite{Dumitrescu2018,Mairal2014a}, and nonconvex. Nonconvexity comes 
from two sources: the sparsity promoting functional $l_0$-norm
and the bi-linearity between the dictionary $D$ and the sparse representation $X$. In addition, the $l_0$-norm
makes the problem non-smooth. 

To preserve the multidimentional structure of the data, a more general {\it tensor} form of the
\DL\ problem can also be used. % in the following.
%The notation and the tensor tools needed for this formulation can be found in Appendix \ref{sec:tt_tools}.
%{ Dire che in Appendice C ci sono i tool di base per i tensori}.
More precisely, consider for instance
a fourth-dimensional array $\mathcal{Y}\in\mathbb{R}^{n_1\times n_2 \times n_e \times n_p}$
consisting of $n_1\times n_2$ images of $n_p$ persons in $n_e$ expressions. %This set of data can be stored in a 
%fourth-order tensor $\mathcal{Y}\in\mathbb{R}^{n_1\times n_2 \times n_e \times n_p}$. 
Using the ${m}\choose {n}$-mode product (see this and related definitions in Appendix \ref{sec:tt_tools})
 and the constraint sets defined in (\ref{eq:omega}) and (\ref{eq:gamma}), the tensor \DL\ problem can be formulated as
\begin{equation}
  \min_{\mathcal{D},\mathcal{X}}\|\mathcal{Y}-\mathcal{D}\times_3^1\mathcal{X}\|_F^2 \hspace{5mm} 
s.t.  \hspace{5mm} \mathcal{D}_{[2]}\in\Omega_{n_1n_2,k}, \hspace{3mm} \mathcal{X}_{[1]}\in\Gamma_{k,n_en_p}^{(\tau)} ,
 \label{eq:dl_tensor}
\end{equation}
where 
 $\mathcal{D}\in\mathbb{R}^{n_1\times n_2\times k}$ is a third-order tensor with unit norm frontal slices,
 and $\mathcal{X}\in\mathbb{R}^{k\times n_e \times n_p}$ is a sparse tensor with at most $\tau$ nonzero elements per column 
fiber, and $k>n_1n_2$. 
%Notice that the constraint in (\ref{eq:omega}) on the columns of $D$ 
%results here in a constraint on the slices of the tensor $\mathcal{D}$ or on the 
%columns of its matricization  $\mathcal{D}_{[2]}$ (see  (\ref{eq:tt_unfolding})). Analogously, 
%the sparsity constraint in the tensor formulation reduces to a constraint either on the maximum number 
%of nonzero elements per column fiber of $\mathcal{X}$ or, equivalently, on the maximum number of nonzero elements 
%per column of the unfolding of $\mathcal{X}$, $\mathcal{X}_{[1]}$. 
%Hence, 
All the constraints proper of the matrix setting can be reformulated on the tensors themselves or on their matricizations. 
The formulation (\ref{eq:dl_tensor}) is equivalent to (\ref{eq:dl_matrix}) since
%\begin{equation*}
$\|\mathcal{Y}-\mathcal{D}\times_3^1\mathcal{X}\|_F=
\|\left(\mathcal{Y}-\mathcal{D}\times_3^1\mathcal{X}\right)_{[2]}\|_F=
\|\mathcal{Y}_{[2]}-\mathcal{D}_{[2]}\mathcal{X}_{[1]}\|_F$.
%\end{equation*}
To reduce memory requirements, instead of considering the whole tensor $\mathcal{D}$, we  propose
its Tensor-Train (TT) Decomposition (see Definition \ref{def:tt}). 
For third order tensors the TT decomposition can be written using either the 
$m\choose n$-mode product or the $n$-mode product (\ref{def:nmodeprod}) as {
%\begin{equation} \label{eq:tt}
$\mathcal{D}=G_1\times_2^1 \mathcal{G}_2 \times_3^1 G_3
=\mathcal{G}_2 \times_1 G_1 \times_3 G_3^T,$
%\end{equation}
where $G_1\in\mathbb{R}^{n_1\times r_1}$, $\mathcal{G}_2\in\mathbb{R}^{r_1\times n_2\times r_2}$, $G_3\in\mathbb{R}^{r_2\times k}$ are the TT-cores and $r_1$, $r_2$ are the TT-ranks. }
% We note that in the TT Decomposition, the first dimension of 
% $\mathcal{G}_1$ and the last dimension of $\mathcal{G}_3$ are equal to one, i.e.,
% these two TT-cores are indeed matrices. Thus, we can also denote them as $G_1$ and $G_3$, respectively. %commented in the second version for arxiv
%{\st{We recall that such decomposition requires that the columns of $G_1$ and of $(\mathcal{G}_2)_{[2]}$ are orthonormal.}}
Following the original TT-SVD algorithm, we require  the columns of $G_1$ and of $(\mathcal{G}_2)_{[2]}$ to be orthonormal. 
This orthogonality property makes
the computation of the Lipschitz constants more convenient (see Proposition \ref{prop:lipschitz_PALMTT}),
and the constraint on $\mathcal{D}$ easier to handle.
 %as will be clear from Proposition \ref{prop:lipschitz_PALMTT}, and in the formulation of the constraint on $\mathcal{D}$. % However, if needed, the constraints on the first two TT-cores can be avoided. }
%{\st {Therefore, to properly define the Tensor-Train formulation of the DL problem the additional
%constraint set of matrices with orthonormal columns is employed,}}
To properly define the Tensor-Train formulation of the \DL\ problem the following additional
constraint set of matrices with orthonormal columns is introduced,
%$\Theta_{m,n}$ of matrices having orthonormal columns needs to be considered
%for the block variables $G_1$ and $(\mathcal{G}_2)_{[2]}$.
% needs to be considered, that is
 \begin{equation}\label{eq:theta}
 \Theta_{m,n} = \{ G \in \mathbb{R}^{m\times n}: G^TG = I_n\}.
 \end{equation}
Then, the TT formulation of the \DL\ problem takes the form
%\begin{equation}

\begin{eqnarray} \label{eq:dl_tt}
 \min_{G_1, \mathcal{G}_2 ,G_3,\mathcal{X}}
\|\mathcal{Y}-(G_1\times_2^1 \mathcal{G}_2 \times_3^1 G_3)\times_3^1\mathcal{X}\|_F^2 \hspace{3mm}  
s.t.&& \hspace{3mm}  \mathcal{X}_{[1]}\in\Gamma_{k,n_en_p}^{(\tau)} \hspace{2mm} G_1\in\Theta_{n_1, r_1}  \nonumber
 \\
 && \hspace{3mm} (\mathcal{G}_2)_{[2]}\in\Theta_{r_1n_2,r_2} \hspace{2mm} G_3\in\Omega_{r_2,k}.
\end{eqnarray}
%where the constraint sets are defined in  (\ref{eq:omega}), (\ref{eq:gamma})  and (\ref{eq:theta}).
By tensor unfolding
$\|\mathcal{Y}-(G_1\times_2^1 \mathcal{G}_2 \times_3^1 G_3)\times_3^1\mathcal{X}\|_F=
\|\mathcal{Y}_{[2]}-\left(I_{n_2}\otimes G_1\right)\left(\mathcal{G}_2\right)_{[2]} G_3\mathcal{X}_{[1]}\|_F$.
%{By unfolding the tensors, (\ref{eq:dl_tt}) is equivalent to the following matrix formulation,
%\begin{eqnarray*}
%\min_{\mathcal{G}_1, \mathcal{G}_2 ,\mathcal{G}_3,\mathcal{X}}\left\|\mathcal{Y}_{[2]}-\left(I_{n_2}\otimes G_1\right)\left(\mathcal{G}_2\right)_{[2]} G_3\mathcal{X}_{[1]}\right\|_F^2 \hspace{3mm} s.t.&& \hspace{5mm}  \mathcal{X}_{[1]}\in\Gamma_{k,n_en_p}^{(\tau)} \hspace{2mm} G_1\in\Theta_{n_1, r_1}  \nonumber
% \\
% && \hspace{5mm} (\mathcal{G}_2)_{[2]}\in\Theta_{r_1n_2,r_2} \hspace{2mm} G_3\in\Omega_{r_2,k} ,
%\end{eqnarray*}
%}
%The constraints on $G_1$ and $(\mathcal{G}_2)_{[2]}$ are due to the particular tensor decomposition used, while 
The constraints on $G_3$ and $\mathcal{X}_{[1]}$ are inherited from the \DL\ formulation. 
In particular, using the TT formulation, the constraint on the columns of $\mathcal{D}_{[2]}$ in (\ref{eq:dl_tensor}) becomes a constraint on the columns of $G_3$. This can be easily proved using the orthogonality of $G_1$ and $(\mathcal{G}_2)_{[2]}$.

The TT formulation described above can be extended to a multiway tensor $\mathcal{Y}\in\mathbb{R 
}^{n_1\times\dots\times n_q\times\dots \times n_s}$ with $q<s$,
% be the tensor obtained by storing $n_{q+1}\times\dots \times n_s$ tensors 
%{of dimension $n_1\times\dots \times n_q$}. Then, the TT formulation can be written 
as
\begin{eqnarray} \label{eq:dl_tensor_N}
&&\min_{G_1, \dots ,G_{q+1},\mathcal{X}}\left\|\mathcal{Y}-\left(G_1\times_2^1 \mathcal{G}_2 \times_3^1 \dots \times_3^1 G_{q+1}\right)\times_{q+1}^1\mathcal{X}\right\|_F^2  s.t. \\
&&   \mathcal{X}_{[1]}\in\Gamma_{k,n_{q+1}\dots n_s}^{(\tau)},
 \hspace{2mm} G_1\in\Theta_{n_1, r_1} \hspace{2mm} G_{q+1}\in\Omega_{r_q,k} ,\nonumber
% \min_{\mathcal{G}_1, \dots ,\mathcal{G}_{q+1},\mathcal{X}}\left\|\mathcal{Y}-\left(\mathcal{G}_1\times_2^1 \mathcal{G}_2 \times_3^1 \dots \times_3^1 \mathcal{G}_{q+1}\right)\times_{q+1}^1\mathcal{X}\right\|_F^2 \hspace{5mm}  s.t.&& \hspace{5mm}  \mathcal{X}_{[1]}\in\Gamma_{k,n_{q+1}\dots n_s}^{(\tau)} \hspace{2mm} G_1\in\Theta_{n_1, r_1} \hspace{2mm} G_{q+1}\in\Omega_{r_q,k}\nonumber
 \\
 && (\mathcal{G}_j)_{[2]}\in\Theta_{r_{j-1}n_j,r_{j+1}} \hspace{2mm} \textup{for}\quad j=2,\dots,q  . \nonumber
\end{eqnarray}
 This formulation can be used when either the dimensionality of the database or the dimensionality of 
the single data is higher than 2; see, e.g., Section~\ref{sec:4D}.
%For example, consider a database of $n_1\times n_2\times n_3$ RGB images of $n_p$ subjects in $n_e$ expressions, $n_{ill}$ illumination and $n_v$ view angles. This can be represented as a 7th order tensor $\mathcal{Y}\in\mathbb{R}^{n_1\times n_2\times n_3\times n_e \times n_{ill}\times n_v \times n_p}$. In this contest $\mathcal{X}\in\mathbb{R}^{k\times n_e \times n_{ill}\times n_v \times n_p}$ and $\mathcal{D}\in\mathbb{R}^{n_1\times n_2\times n_3\times k}$ can be represented using four TT-cores. Thus, we can use (\ref{eq:dl_tensor_N}) with $q=3$ and $s=7$. 
% oppure:
%For example, consider a database of $n_1\times n_2\times n_3$ RGB images of $n_p$ subjects in $n_e$ expressions. This can be represented as a 5th order tensor $\mathcal{Y}\in\mathbb{R}^{n_1\times n_2\times n_3\times n_e \times n_p}$. In this contest $\mathcal{X}\in\mathbb{R}^{k\times n_e \times n_p}$ and $\mathcal{D}\in\mathbb{R}^{n_1\times n_2\times n_3\times k}$ can be represented using four TT-cores. Thus, we can use (\ref{eq:dl_tensor_N}) with $q=3$ and $s=5$. Quale tra le due opzioni mettere? Quella legata all'esempio numerico? In tal caso bisogna mettere la seconda.}

%%%%%%%%%%%%%%%%%%%%%%%%%
\subsection{The \DL\ classification problem} \label{sec:classification}
Classification is one of the major tasks within data mining;
we refer the reader to \cite[Chapter 8]{Dumitrescu2018} for an overview of different \DL\ classification algorithms. 
Among them, two approaches seem to be highly rated in the \DL\ literature.
%In the \DL\ literature there are two main kinds of classification algorithms.}
In the first, the dictionary $D$ and 
the sparse matrix $X$ are learnt from data by solving (\ref{eq:dl_matrix}) and a 
classifier matrix $W\in\mathbb{R}^{n_p\times k}$  is computed a posteriori  as the solution of the following problem:
\begin{equation} \label{eq:W}
 \min_W \|C-WX\|_F^2+\beta \|W\|_F^2,
\end{equation}
 where $\beta$ is a positive small constant, and the matrix
$C\in\mathbb{R}^{n_p\times n_en_p}$ contains the labels of the images stored in $Y$.
In particular, if the column $i$ of $Y$ contains the person $\ell$, then %all the entries of 
the $i$th column of $C$ is equal to $e_\ell$, the $\ell$th canonical basis vector. % are zero except the $\ell$th entry that is equal to one.  
Different values of $\beta$ were investigated in our computational experiments. Since this
stage of the computation is not our main algorithmic concern, 
we will report numerical results only for $\beta=0$.

In the second category  of classification algorithms 
the classifier $W$ is learnt from the data together with $X$ and $D$:
%and the following minimum problem is solved in the three variable blocks $X,D,W$:
%
\begin{equation} \label{eq:classification_DWX}
\min_{D,X,W}\|Y-DX\|_F^2+\gamma \|C-WX\|_F^2 \hspace{5mm} s.t.  \hspace{5mm} D\in\Omega_{n,k}, \hspace{3mm} X\in\Gamma_{k,p}^{(\tau)},
\end{equation}
where $\gamma$ is a positive constant. 
This formulation aims to enforce a representative 
strength together with a discriminative action. %, thanks to the presence of $W$ in the minimization procedure. 
For this reason this classification algorithm has been named ``discriminative \DL''; see, e.g., \cite[sec.8.5.2]{Dumitrescu2018} and
references therein. To maintain the presentation sufficiently concise, we will not further discuss this
second formulation, although all results can be adapted to this setting. We refer to
\cite{Brandoni.PhD.22} for a more detailed presentation.
%In other words $X$ not only represents the data contained in $Y$, but it is also able to cluster and classify them.}

Once the classifier has been computed, the classification task proceeds as follows:
%In both settings the classification procedure is the same. 
Given a new image $y\in\mathbb{R}^{n}$ to be classified, 
its sparse representation $x\in\mathbb{R}^{k}$ is computed, e.g., by an OMP-type 
algorithm\footnote{See, e.g., {\tt http://www.cs.technion.ac.il/$\,\tilde{\,}$\,ronrubin/software.html}.
In the tensor case, we derived a new tensor-train version of OMP  
{named OMP-TT where the sparse solution is computed using the TT-cores of the dictionary without 
explicitly creating $\mathcal{D}$. For further details see  \cite[Chapter 7]{Brandoni.PhD.22}}.},
so that $y$ is assigned to class $\hat{\ell}={\rm argmax}_i |Wx|_i$.
 
%As already discussed for the \DL\ problem (\ref{eq:dl_matrix}), 
Preserving the multidimensional structure of the data can be extremely useful also 
in classification contexts. 
As in the previous section, the optimization problem  (\ref{eq:classification_DWX}) involving the classifier matrix can be generalized to the (multi-order) tensor setting.

%%%%%%%%%%%%%%%%%%%%%%%%%%%%%%%%%%%%
\section{The general PALM framework}\label{sec:palm} % and the new \spalm algorithm}
We first review the main properties of the original 
PALM algorithm as a general platform for \DL\ oriented PALM-type algorithms. Then we
introduce the new spectral PALM method (hereafter \spalmdot) and its convergence properties.
To this end, we also need to recall certain general aspects of non-smooth nonconvex optimization and
to fix our assumptions.

The Proximal Alternating Linearized Minimization (PALM) algorithm \cite{Bolte2014} provides
a general setting for solving non-smooth nonconvex optimization problems of the form
 \begin{equation} \label{eq:nnpb}
  \min_{x,y} \Psi(x,y) \hspace{5mm} \textup{with} \hspace{5mm}  \Psi(x,y):= H(x,y)+f_1(x)+f_2(y) ,
 \end{equation}
 where the functions $f_1$ and $f_2$ are extended valued (i.e., allowing the inclusion of constraints) and $H$ is a smooth 
 coupling function, only required to have partial Lipschitz continuous gradients
 $\nabla_x H$ and $\nabla_y H$  (see more precise definitions later on).
This optimization problem has become a reference tool in many machine learning and image processing methodologies, see, e.g., the examples mentioned in \cite{iPALM}.
%Many practical problems frequently used in the machine learning and image processing
%communities are inherently nonconvex and can be formulated using the class of problems (\ref{eq:nnpb}). Relevant examples are sparse nonnegative matrix factorizations (NMF) \cite{}, blind image deconvolution (BID) \cite{}, matrix completion \ref{} and 
%\DL\ as discussed in the previous section, just to name a few.  
% 
For each block of coordinates in  (\ref{eq:nnpb}), PALM performs one gradient step 
on the smooth part, followed by a proximal step on the non-smooth part.
The method belongs to the class of Gauss-Seidel proximal schemes, also known as
alternating minimization schemes, and generalizes to the nonconvex non-smooth case well-known and widely used
alternating algorithms \cite{Bau2011,beck2017first,Bonettini,GrippoSciandro,iPALM}.

An important contribution to the success of PALM was the convergence proof strategy obtained
{in \cite{Attouch2010,Bolte2014}}. This allowed the design of new convergent alternating minimization algorithms, consisting of
a sequence
%The main contribution of \cite{Bolte2014} is providing a convergence proof recipe
%(see Appendix \ref{app:recipe}) that allows to design new convergent alternating minimization algorithms, i.e. is which generate sequence 
converging to critical points of (\ref{eq:nnpb}).
%. In \cite{Bolte2014} the function $\Psi$ is assumed to satisfy the so-called Kurdyka-\L{}ojasiewicz property, while in our setting it is only required to be semialgebraic.

%, though 
%\footnote{Results are proved for more general functions which satisfy the so-called Kurdyka-Lojasiewicz property \cite{Bolte2014}. {\color{blue}Mettere?}}. {\color{blue} citare anche \cite{Udell}}

The PALM algorithm relies on the knowledge of the partial Lipschitz moduli of $\nabla_x H$ and $\nabla_y H$
or of some upper estimates, and was 
applied to sparse non-negative matrix factorizations in \cite{Bolte2014},
 for which partial Lipschitz moduli are explicitly available, though its practical behavior was not investigated. 
Inertial variants of PALM have been later proposed with the aim of accelerating the convergence of 
the original algorithm \cite{Gao2020,TITAN,iPALM}.
%An first version, named iPALM \cite{iPALM}  combines the last two iterates using the so-called heavy-ball method, and has numerically applied to solve sparse NMF, BID and image deconvolution using dictionary learning. 
%A slightly different combination of the last two iterates yield the GiPALM version \cite{GiPALM} that was applied to 
%image recovery and sparse NMF. Very recently, a further inertial version based on the use of surrogate functions was developed in \cite{TITAN} and has been used for solving sparse NMF and matrix completion problems.  
All these variants enjoy the convergence properties of the original PALM, and are based on the 
Lipschitz constants explicitly available for all the addressed applications. 
%\footnote{\color{blue} GiPALM usa anche la costante di Lip globale $M$}
When these constants are not explicitly known, then a backtracking scheme can be employed 
to approximate their action \cite{beck2017first,FISTA}, so that convergence results still hold.
%{\color{blue} A sketch of the algorithm with constant stepsize is reported in Algorithm \ref{alg:PALM}. L'ALGO deve essere spostato dopo le Assumption}
%or still too difficult to compute, then they can be 
%approximated by employing a backtracking scheme as in \cite{FISTA,beck2017first}  %\cite{beck2017first,FISTA}
%that, in principle, can be incorporated in these algorithms and the convergence results remain true.%  In fact, only iPALM reports on experiments on BID and image denoising with dictionary learning where the (partial) Lipschitz constants are estimated with a backtracking rule. 
%
%
%
Our {\it nonconvex-nonsmooth} setting provides significant challenges. We
consider problems of the form (\ref{eq:nnpb}), for which we assume that
 the functions $H$, $f_1$ and $f_2$ satisfy the following minimal assumptions set.
 
\vskip 0.05in
%\begin{ass} \label{ass:fg} 
{\sc Assumption A}
 \begin{description}
  \item(A1) $f_1: \mathbb{R}^n \rightarrow (-\infty,+\infty]$ and $f_2: \mathbb{R}^m  \rightarrow (-\infty,+\infty]$ are proper and lower semicontinuous functions such that $\inf_{\mathbb{R}^{n}}{f_1}>-\infty$ and $\inf_{\mathbb{R}^{m}}{f_2}>-\infty$.
\item(A2) $H:  \mathbb{R}^{n\times m} \rightarrow  \mathbb{R}$ is continuously differentiable and $\inf_{\mathbb{R}^{n\times m}} \Psi>-\infty$.
 \item{(A3) $\nabla H$ is Lipschitz continuous on bounded subsets of $\mathbb{R}^{n\times m}$.}
%, i.e.  for each bounded subsets $B_1 \times  B_2$ of $\mathbb{R}^n \times \mathbb{R}^m$ there exists $M>0$ such that for all $(x_i, y_i) \in B_1 \times B_2$, $i = 1, 2$ such that
% \begin{equation*}
% \left\|\left(\nabla_x H(x_1,y_1)-\nabla_x H(x_2,y_2) , \nabla_y H(x_1,y_1)-\nabla_y H(x_2,y_2)\right)\right\|_2\leq M \left\|\left(x_1-x_2,y_1-y_2\right)\right\|_2
%\end{equation*}}
 \item(A4) {The partial gradients $\nabla_x H(x,y)$ and $\nabla_y H(x,y)$  are  globally Lipschitz continuous}, i.e. there exist nonnegative $L'_1(y)$ and $L'_2(x)$ such that
 \begin{equation*}
 \mbox{ fixed } y, \ \  \left\|\nabla_x H(u,y)-\nabla_x H(v,y)\right\|_2\leq L'_1(y)\left\|u-v\right\|_2, \ \forall u,v \in \mathbb{R}^{n}  ,
 \end{equation*}
 \begin{equation*}
 \mbox{ fixed } x, \ \   
\left\|\nabla_y H(x,u)-\nabla_y H(x,v)\right\|_2\leq L'_2(x)\left\|u-v\right\|_2, \ \forall u,v \in \mathbb{R}^{m} .
 \end{equation*}
 \item(A5) There exist  $\lambda_{i}^{+}>0$, $i=1,2$ such that 
% \begin{equation}
  %\label{eq:limit_lipschitz_inf}
  %\inf \{L_1(y^k); \, k\in\mathbb{N}\}\geq \lambda_1^{-} , \hspace{8mm}  \inf \{L_2(x^k); \, k\in\mathbb{N}\}\geq \lambda_2^{-} ,
% \end{equation}
\begin{equation}
  \label{eq:limit_lipschitz_sup}
   \sup \{L_1(y^k); \, k\in\mathbb{N}\}\leq \lambda_1^{+} , \hspace{8mm}  \sup \{L_2(x^k); \, k\in\mathbb{N}\}\leq \lambda_2^{+}.
 \end{equation}

 \end{description}
%\end{ass}
\vskip 0.1in
{We call  {\em partial smoothness parameters} the constants $L'_1(y)$ and $L'_2(x)$ in 
Assumption~A4.
From the definition of Lipschitz continuity  it follows that if a function is Lipschitz continuous with smoothness 
parameter $L$ then it is also  Lipschitz continuous with any $\bar L \ge L$. }
As usual, we will call {\em Lipschitz constant} the smallest possible smoothness parameter of a given function;
for the partial gradients of the function $H$ it will be denoted as  $L_1(y)$ and $L_2(x)$.

\begin{remark}
{%\color{blue}
In \cite{Bolte2014} a further assumption is made to ensure that 
the constants $L'_1(y)$ and $L'_2(x)$ 
are uniformly bounded away from zero, i.e. that  there 
exist $\lambda_{i}^{-}>0$, $i=1,2$ such that
\begin{equation}
  \label{eq:limit_lipschitz_inf}
\inf \{L_1'(y^k); \, k\in\mathbb{N}\}\geq \lambda_1^{-} , \hspace{8mm}  
\inf \{L_2'(x^k); \, k\in\mathbb{N}\}\geq \lambda_2^{-}.
 \end{equation}
In our description we avoid this assumption by choosing 
the partial smoothness parameters $L'_1$ and $L'_2$ safely bounded away from 
zero, as suggested in \cite[Remark~3]{Bolte2014}. 
}
\end{remark}

%%%%%%%%%%%%%%%%%%%%%%%%
%\vskip 10pt

% Let $L_1(y)$ and $L_2(x)$ be the global Lipschitz constants of $\nabla_x \Psi(x,y) = \nabla_x \Psi_1(x,y)$ and $\nabla_y \Psi(x,y) = \nabla_y \Psi_2(x,y)$, fixed $y$ and $x$ respectively.
 
%Moreover, let $\Psi_1(x,y) := H(x,y)+f_1(x)$ and  $\Psi_2(x,y) := H(x,y)+f_2(y)$.
The PALM algorithm with constant stepsize is reported in Algorithm \ref{alg:PALM}.
In there, the standard Moreau proximal mapping  is employed (\cite{beck2017first}):
%it is convenient to  recall the definition of the Moreau proximal mapping \cite{beck2017first}.  
Given a lower and semicontinuos function $\sigma \, : \mathbb{R}^{m}\rightarrow (-\infty, \infty]$ and a scalar $t>0$, the proximal map is defined as follows
 \begin{equation} \label{eq:prox_map}
  \textup{prox}_t^{\sigma} (x) =\argmin_{w\in\mathbb{R}^{m}}\left\{ \sigma (w)+\frac{t}{2} \|w-x\|^2\right\}.
 \end{equation}
%

%%%%%%%%%%%%%%%%%%%%%%%%%%%%%%%%%%%%%%%%%%%
\begin{algorithm}[ht]
\caption{PALM (with constant stepsize) \label{alg:PALM}}
\begin{algorithmic}[1]
\STATE {\bf{Input}}: 
$(x_0,y_0)\in \mathbb{R}^{n} \times \mathbb{R}^{m}$, $\eta_1,\eta_2>1$,  $\mu_1, \mu_2 >0$
\FOR {$k = 0,1,\dots, $}% generate a sequence $\{x_k,y_k\}$ as follows}
\STATE {\emph{Update $x$}:
Set  {$L''_1(y_k) = \max \{\eta_1 L'_1(y_k), \mu_1\}$ and $\bar \alpha_{k,1} = 1/L''_1(y_k)$} and compute
 \begin{equation}\label{stepx}
 x_{k+1} = \textup{prox}^{f_1}_{1/\bar \alpha_{k,1}}\left (x_k - \bar \alpha_{k,1} \nabla_{x} H(x_k,y_k) \right)
\end{equation}}
\STATE {\emph{Update $y$}:
Set  {$L''_2(x_{k+1}) = \max \{\eta_2 L'_2(x_{k+1}), \mu_2\}$ and $\bar \alpha_{k,2} = 1/L''_2(x_{k+1})$} and compute
\begin{equation}\label{stepy}
y_{k+1} = \textup{prox}^{f_2}_{1/\bar \alpha_{k,2}}\left (y_k - \bar \alpha_{k,2} \nabla_{y} H(x_{k+1},y_k) \right)
\end{equation}}
\ENDFOR
\end{algorithmic}
\end{algorithm}
%%%%%%%%%%%%%%%%%%%%%%%%%%%%%%%%%%%%%%%%%%%%%%

%As clear from Algorithm \ref{alg:PALM}, 
PALM alternates the minimization on the two blocks $(x,y)$ and makes explicit use of the 
smoothness parameters $L''_1(y_k)$ and  $L''_2(x_k)$.
%. Given some partial smoothness 
%parameter $L_1'(y_k)$ and $L'_2(x_k)$ for $H$, the costants $L''_1(y_k)$
%and  $L''_2(x_k)$ defined at Line 3 and 4 of  Algorithm \ref{alg:PALM} respectively, are still partial smoothness parameters for $H$ and satisfy (\ref{eq:limit_lipschitz_inf}) .}
%Lipshitz constants. 
When these parameters are not available, they can be approximated by using a backtracking strategy.
%Moreover, let $\Psi_1(x,y) := H(x,y)+f_1(x)$ and  $\Psi_2(x,y) := H(x,y)+f_2(y)$.
Indeed, setting $\Psi_1(x,y) := H(x,y)+f_1(x)$ and $L_{0,1}=1$, at each iteration $k>1$,  
 the procedure starts with  $L_{k,1}=L_{k-1,1}$ and then $L_{k,1}$ is increased  by a 
constant factor, typically doubled, until the following  {\it sufficient decrease condition} is met
\begin{equation}\label{eq:Psi}
\Psi_1(x_{k+1}, y_k)  \le \Psi_1(x_k,y_k) + \langle \nabla_x H(x_k,y_k),x_{k+1}-x_k  \rangle + \frac{L_{k,1}}{2} \| x_{k+1}-x_k\|_2^2.
\end{equation}
Then, the stepsize $\bar \alpha_{k,1}$ is taken as the reciprocal of the found  
value (analogously for $\bar \alpha_{k,2}$, where we set $\Psi_2(x,y) := H(x,y)+f_2(y)$).
%The above {\it sufficient decrease condition} is motivated by the descent lemma reported 
%in Lemma~\ref{lem:decLip} in Appendix \ref{app:ns}.
%
%
%
The following theorem reports the main convergence result proved for the original PALM algorithm and
successively extended to all PALM-type algorithms
\footnote{The result can be generalized to the case
of a function $\Psi$ that satisfies the so-called Kurdyka-\L{}ojasiewicz property, as done for PALM in \cite{Bolte2014}.
}, see \cite{Bolte2014,TITAN,Gao2020,iPALM}.%\cite{Bolte2014,iPALM,GiPALM,TITAN?,Udell} 

\begin{theorem}
\label{thm:convergence_PALM}
Suppose that $\Psi$ is semi-algebraic such that Assumptions~A hold. Let $\{z_k\} =\{(x_k,y_k)\}$ be a bounded sequence generated by PALM in Algorithm~\ref{alg:PALM}. Then the sequence $\{z_k\}$ has finite length, that is
$\sum_{k=1}^\infty \left \| z_{k+1}-z_k\right \| < \infty,$ and converges to a critical point $z^*$ of $\Psi$.
\end{theorem}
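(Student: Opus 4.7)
The plan is to follow the now-classical three-ingredient recipe of Attouch--Bolte--Svaiter, combined with the Kurdyka--\L{}ojasiewicz (KL) property that every semi-algebraic function enjoys. Concretely, I would establish (i) a \emph{sufficient decrease} property of the form $\Psi(z_k)-\Psi(z_{k+1}) \ge a\,\|z_{k+1}-z_k\|_2^2$, (ii) a \emph{relative error} bound of the form $\mathrm{dist}(0,\partial \Psi(z_{k+1})) \le b\,\|z_{k+1}-z_k\|_2$, and (iii) a \emph{continuity} property for the limit points. Once these are in place, invoking the KL inequality at any cluster point of the bounded sequence gives the summability $\sum_k \|z_{k+1}-z_k\|_2 <\infty$, hence the finite-length statement, and in turn convergence of $\{z_k\}$ to a single critical point of $\Psi$.

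For (i), I would apply the standard descent lemma (a consequence of Assumption~A4) to the $x$-block: since $\nabla_x H(\cdot,y_k)$ is $L_1(y_k)$-Lipschitz and $\bar\alpha_{k,1}=1/L''_1(y_k)$ with $L''_1(y_k)\ge \eta_1 L_1(y_k)$ and $\eta_1>1$, combining the descent inequality with the optimality of $x_{k+1}$ as the proximal minimizer in \eqref{stepx} yields
\begin{equation*}
\Psi(x_{k+1},y_k)-\Psi(x_k,y_k)\ \le\ -\tfrac{1}{2}\bigl(L''_1(y_k)-L_1(y_k)\bigr)\|x_{k+1}-x_k\|_2^2.
\end{equation*}
The same argument on the $y$-block at the updated iterate $(x_{k+1},y_k)$ produces an analogous bound. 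Using the uniform upper estimates $\lambda_i^+$ from (A5) and the fact that $\eta_i>1$ and $\mu_i>0$ keep $L''_i$ safely above $L_i$, one obtains the sufficient decrease with a \emph{constant} $a>0$ independent of $k$. A byproduct of telescoping this inequality is $\sum_k \|z_{k+1}-z_k\|_2^2<\infty$.

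For (ii), I would read off the optimality conditions from the proximal steps: \eqref{stepx} gives $-\frac{1}{\bar\alpha_{k,1}}(x_{k+1}-x_k)-\nabla_x H(x_k,y_k)\in\partial f_1(x_{k+1})$, and similarly for the $y$-update. Adding the partial gradients $\nabla_x H(x_{k+1},y_{k+1})$ and $\nabla_y H(x_{k+1},y_{k+1})$ and using the local Lipschitz continuity of $\nabla H$ on bounded sets (Assumption~A3)---which applies because $\{z_k\}$ is assumed bounded---one constructs elements of $\partial\Psi(z_{k+1})$ whose norm is controlled by a constant multiple of $\|z_{k+1}-z_k\|_2$. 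The constants here depend on $\lambda_1^+,\lambda_2^+$ and on the Lipschitz modulus of $\nabla H$ on the bounded trajectory. For (iii), the standard closedness of the limiting subdifferential together with the continuity of $H$ and lower semicontinuity of $f_1,f_2$ (Assumptions~A1--A2) show that every cluster point $z^*$ of $\{z_k\}$ satisfies $0\in\partial\Psi(z^*)$ and that $\Psi(z_k)\to \Psi(z^*)$.

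To close the argument, the semi-algebraicity of $\Psi$ implies that $\Psi$ satisfies the KL property at every point, and in particular at any cluster point of the bounded sequence. Applying the abstract convergence result of Attouch--Bolte--Svaiter, which takes (i)--(iii) together with the KL inequality as its sole hypotheses, yields the finite length property and convergence to a unique critical point $z^*$. The main technical obstacle is step (ii): one must carefully combine the two one-sided optimality conditions into a single element of the full subdifferential $\partial \Psi(z_{k+1})$, and then bound it using the Lipschitz continuity of $\nabla H$ between the mismatched points $(x_k,y_k)$, $(x_{k+1},y_k)$, and $(x_{k+1},y_{k+1})$; the boundedness of $\{z_k\}$ and Assumption~A3 are exactly what is needed to make this bound uniform in $k$.
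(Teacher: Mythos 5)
Your proposal is correct and coincides with the argument the paper relies on: the paper does not prove Theorem~\ref{thm:convergence_PALM} itself but cites \cite{Bolte2014}, whose proof is exactly the sufficient-decrease / relative-error / continuity recipe closed by the Kurdyka--\L{}ojasiewicz property of semi-algebraic functions that you reconstruct (and which the paper reuses as conditions C1)--C3) in Lemma~\ref{lem:C2} for sPALM). Your handling of the two delicate points --- keeping the decrease constant uniformly positive via $\eta_i>1$, $\mu_i>0$, and bounding the subgradient at $z_{k+1}$ using A3 and A5 on the bounded trajectory --- matches the cited proof.
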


%%%%%%%%%%%%%%%%%%%%%%%%%%%%%%%%%%%%%%%
\section{The spectral PALM algorithm (\spalmdot)}\label{sec:sPALM}
Starting from the PALM framework, we propose \spalm that, for each coordinate block,  
employs  a {\em spectral} gradient step in the smooth part of the operator, while maintaining a proximal step for the
non-smooth part.
More precisely, \spalm uses a spectral stepsize in each variable bock in combination with an Armijo-type 
backtracking strategy ensuring the overall convergence.
Spectral\footnote{The denomination ``spectral'' refers to the property that the steplength is related to the spectrum of the 
average Hessian matrix (when well-defined).}
%{\color{blue} la nostra H e' C2, quindi e' tutto coerente, in generale se il grad H e' solo Lip si perde questa interpretazione ma dovrebbe restare ben def.}}
gradient methods are well-known optimization strategies for the solution of large scale  unconstrained 
and constrained optimization problems \cite{surveyPBB,PBB}. These
algorithms are rather appealing for their simplicity, low-cost per iteration (gradient-type algorithms) and good practical performance due to a clever choice of the step length.
The key to the success of these approaches, also known as Barzilai-Borwein methods from the pioneering work \cite{BB88}, 
lies in the explicit use of first-order information of the cost function
on the one hand and, on the other hand, in the implicit use of second-order information 
embedded in the step length through a rough approximation of the cost function Hessian.
While spectral gradient methods were first proposed for convex quadratic problems, they have
have been widely used in a large variety of more general contexts \cite{Zanni,RBB,treni}.

Spectral stepsizes have also been used in \cite{Bonettini} 
 in the context of {\it convex} constrained optimization problems.
% for the solution of problem (\ref{eq:nnpb}) when $f_1$ and $f_2$ are indicator functions {\color{red} TO BE DEFINED} 
%over {\it convex} sets. 
Indeed the proposed Cyclic Block Coordinate Gradient Projection algorithm in \cite{Bonettini} makes use of spectral stepsizes  when applied to non-negative matrix factorizations.
  However, these steps are used to determine an 
approximate solution of the minimization problem for each variable block,  and 
they do not use information from the previous iteration of the alternating algorithm. 
To the best of our knowledge, the use of spectral stepsizes embedded in an alternating algorithm for non-convex non-smooth problems of the form (\ref{eq:nnpb}) has remained so far unexplored.  We contribute to fill this gap.

{%\color{blue}
Classical spectral stepsizes, also known as BB stepsizes {from the initials of Barzilai 
and Borwein}, are motivated by the quasi-Newton approach, 
where the  inverse of the Hessian matrix is replaced by a multiple of the identity 
matrix \cite{BB88}. Consider the case of the partial  Hessian 
$\nabla_{xx} H$ (the case of  $\nabla_{yy} H$ is analogous).
For a given iteration $k$, let $s_k=x_{k+1}-x_k $ and $g_k=\nabla_x H(x_{k+1},y_{k})-\nabla_x H(x_{k},y_k) $ be the difference between two consecutive iterates and corresponding gradient values.
Then 
$\nabla_{xx} H(x_{k+1},y_{k})$ is approximated by ${\alpha_{k+1}^{-1}}I$ where the positive scalar $\alpha_{k+1}$ is defined by 
either of the following BB values 
$$
\alpha_{k+1}^{BB1} = \argmin_{\alpha} \| {\alpha^{-1}} s_k - g_k\| \quad \mbox{ or } \quad \alpha_{k+1}^{BB2} = \argmin_{\alpha} \| s_k - {\alpha}g_k\|
$$
that is,
\begin{equation}\label{eq:bb12}
\alpha_{k+1}^{BB1} = \frac{ \langle s_{k},s_{k} \rangle }{\langle s_{k},g_{k}\rangle} \quad \mbox{ or } \quad  \alpha_{k+1}^{BB2} = \frac{\langle s_{k},g_{k}\rangle}{\langle g_{k},g_{k}\rangle}.
\end{equation}
}
%The formal selection of the spectral stepsize is summarized in Algorithm~\ref{alg:BBstep}. 
A variety of different rules based on suitable adaptive combinations of  
$\alpha_{k+1}^{BB1}$ and $\alpha_{k+1}^{BB2}$
in (\ref{eq:bb12}) have been proposed in the literature  {in the solution of ``single block variable"  nonlinear optimization problems   and  it was observed experimentally that alternating the two stepsizes along iterations is  beneficial for the performance, 
see \cite{Zanni,treni} and references therein}. 
We report in Algorithm \ref{alg:BBstep}  a simple alternating rule based on \cite{GS}
that gives the best results in our numerical experiments (see 
Section~\ref{sec:numerical_expe}). Other rules can be equally {adapted} within \spalmdot. 
The inclusion of threshold values in Algorithm~\ref{alg:BBstep} ensures that the $\alpha$'s remain bounded. 
The overall \spalm scheme is reported in Algorithm~\ref{alg:sPALM}.

\begin{algorithm}[htb]
\caption{Computation of the spectral stepsize $\alpha_{k+1,i}$, $i = 1$ or $i=2$ \label{alg:BBstep}}
\begin{algorithmic}[1]
\STATE {\bf{Input}}: $s_{k,i}, g_{k,i}$,  $0 < \alpha_{\min} \le \alpha_{\max}$.%,  $\epsilon \in (0,1)$, $m_{\alpha}$ nonnegative integer.

\IF { $ \langle s_{k,i},g_{k,i} \rangle  > 0 $}
\IF{$k$ is odd}
\STATE {$\alpha_{k+1,i}= \max \left\{\alpha_{\min}, \min \left\{\alpha_{k+1,i}^{BB1},\alpha_{\max} \right\} \right\}$ with $\alpha_{k+1,i}^{BB1}=  \frac{\langle s_{k,i},s_{k,i}\rangle }{\langle s_{k,i},g_{k,i}\rangle } %%BB1
$}\label{eq:bb1}
\ELSE
\STATE {$
\alpha_{k+1,i} = \max \left\{\alpha_{\min}, \min \left\{\alpha_{k+1,i}^{BB2},\alpha_{\max} \right\} \right\}$ with $\alpha_{k+1,i}^{BB2}=\frac{ \langle s_{k,i},g_{k,i} \rangle }{ \langle g_{k,i},g_{k,i} \rangle }$}  % BB2
 \label{eq:bb2}
\ENDIF
\ELSE
 \STATE$  \alpha_{k+1,i} = 1$

\ENDIF
\end{algorithmic}
\end{algorithm}

\begin{algorithm}[ht]
\caption{\spalmdot \label{alg:sPALM}}
\begin{algorithmic}[1]
\STATE {\bf{Input}}: 
$(x_0,y_0)\in \mathbb{R}^{n} \times \mathbb{R}^{m}$, $\rho_1,\delta_1,\rho_2,\delta_2  \in (0,1)$,
%$M \ge 1$ ($\bar M = M/2-1$),
$0 < \alpha_{\min} \le \alpha_{\max}$,  $\alpha_{0,1}, \alpha_{0,2}\in [\alpha_{\min}, \alpha_{\max}]$.
%, $m_{\alpha}$ nonnegative integer.
%$0 < \beta_{min} \le \beta_{max}$,  $\beta_0\in [\beta_{min}, \beta_{max}]$,
%$M$ PARI
%\STATE Set $\Psi_{ref} = \Psi(x_0,y_0)$;
\FOR {$k = 0,1,\dots, $}% generate a sequence $\{x_k,y_k\}$ as follows}
\STATE {\emph{Update $x$}:
Set \begin{equation}\label{stepx_spalm}
 x_{k+1} = \textup{prox}^{f_1}_{1/\bar \alpha_{k,1}}\left (x_k - \bar \alpha_{k,1} \nabla_{x} H(x_k,y_k) \right)
\end{equation}}
where $\bar \alpha_{k,1} = \rho_1^{i_k} \alpha_{k,1}$ and $i_{k}$ is the smallest nonnegative integer for which the following condition is satisfied,
\begin{equation}\label{lsx}
\Psi_1(x_{k+1}, y_k)  \le \Psi_1(x_k,y_k) - \frac{\delta_1}{2 \bar \alpha_{k,1}} \| x_{k+1}-x_k\|_2^2
\end{equation}
%\STATE Update $\Psi_{ref} = \max\{\Psi_{k-j}, \Psi_{k-j,+}, \   0\le j \le M_1 \} $ with $M_1 = \min(k,\bar M)$.
\STATE Compute $\alpha_{k+1,1} \in [\alpha_{\min}, \alpha_{\max}]$ using Algorithm \ref{alg:BBstep} with  $s_{k,1} = x_{k+1}-x_k$ and $g_{k,1} =  \nabla_{x} H(x_{k+1},y_k) -  \nabla_{x} H(x_k,y_k)$. \label{alpha1}
%\STATE Set $\alpha_{k+1} = \min\{ \alpha_{max} \max \{\frac{s_k^Ts_k}{s_k^Tw_k},  \alpha_{min} \}\} \in [\alpha_{min}, \alpha_{max}]$ 
%where  $s_k = x_{k+1}-x_k$ and $w_k =  \nabla_{x} H(x_{k+1},y_k) -  \nabla_{x} H(x_k,y_k)$.
\STATE {\emph{Update $y$}:
Set \begin{equation}\label{stepy_spalm}
y_{k+1} = \textup{prox}^{f_2}_{1/\bar \alpha_{k,2}}\left (y_k - \bar \alpha_{k,2} \nabla_{y} H(x_{k+1},y_k) \right)
\end{equation}
where $\bar \alpha_{k,2} = \rho_2^{j_k} \alpha_{k,2}$  and $j_{k}$ is the smallest nonnegative integer for which the following condition 
is satisfied,
\begin{equation}\label{lsy}
\Psi_2(x_{k+1}, y_{k+1})  \le \Psi_2(x_{k+1},y_k) - \frac{\delta_2}{ 2 \bar \alpha_{k,2}} \|y_{k+1}-y_k\|_2^2
\end{equation}}
%\STATE Update $\Psi_{ref} = \max\{\Psi_{k+1}, \Psi_{k-j}, \Psi_{k-j,+}, \   0\le j \le M_2, \Psi_{M_1,+}\}$ with $M_1 = \min(k,\bar M), M_2 =  \min(k,\max(\bar M -1,0)$.
%\STATE Choose $\beta_{k+1} \in [\beta_{min}, \beta_{max}]$. 
\STATE Compute $\alpha_{k+1,2} \in [\alpha_{\min}, \alpha_{\max}]$ using Algorithm \ref{alg:BBstep} with  $s_{k,2} = y_{k+1}-y_k$ and $g_{k,2} =  \nabla_{y} H(x_{k+1},y_{k+1}) -  \nabla_{y} H(x_{k+1},y_k)$. \label{alpha2}
\ENDFOR
\end{algorithmic}
\end{algorithm}

%\begin{remark}\label{rem:finite}
% Conditions (\ref{lsx}) and (\ref{lsy}) in Algorithm~\ref{alg:sPALM} are satisfied in a finite number of backtracking  steps.
%For instance, letting $L_1'(y_k)$ be a partial smoothness parameter of {$\nabla_x H$} bounded away from zero, 
%from {the {\em descent lemma}, see e.g. \cite[Lemma 1]{Bolte2014},} %Lemma~\ref{lem:decLip}
% for any positive $t$ such that $t < 1/ L_1(y_k)$ we have
%$$
%\Psi_1(x_{k+1}, y_k)  \le \Psi_1(x_k,y_k) - \frac{1}{2}\left(\frac{1}{t} -L_1(y_k)\right ) \| x_{k+1}-x_k\|_2^2.
%$$
%Taking $t = \rho_1^{i_k}\alpha_{k,1}$ with $i_k$ such that $t \le \frac{1-\delta_1}{L_1(y_k)}$, the above condition implies condition (\ref{lsx}) is satisfied. Analogously for (\ref{lsy}).
%\end{remark}
\begin{remark}\label{rem:finite}
{%\color{magenta}
Under Assumption A, conditions (\ref{lsx}) and (\ref{lsy}) in Algorithm~\ref{alg:sPALM} are satisfied in a finite number of backtracking  steps.
For instance,  from {the {\em descent lemma}, see e.g. \cite[Lemma 1]{Bolte2014},} %Lemma~\ref{lem:decLip}
 for any $x$ defined by $  x = \textup{prox}^{f_1}_{1/ \alpha}\left (x_k - \alpha \nabla_{x} H(x_k,y_k) \right)$
we have that 
$$
\Psi_1(x, y_k)  \le \Psi_1(x_k,y_k) - \frac{1}{2}\left(\frac{1}{\alpha} -L_1(y_k)\right ) \| x-x_k\|_2^2,
$$
and condition (\ref{lsx}) is satisfied for $\alpha \le  \frac{1-\delta_1}{L_1(y_k)}$.
 Therefore, backtracking terminates with $\bar \alpha_{k,1}\! \ge\! \min\! \left\{\alpha_{k,1},\frac{\rho_1(1-\delta_1)}{L_1(y_k)}\right \} 
\!\ge \! \min \left\{\alpha_{\min},\frac{\rho_1(1-\delta_1)}{\lambda_1^+}\right \}$. Similarly for condition (\ref{lsy}).
}
\end{remark}
\vskip 0.1in

%\begin{remark}
%The original PALM algorithm can be retrieved from Algorithm \ref{alg:PALM_tt} by choosing
%at Lines \ref{alpha1} and \ref{alpha2}  $\alpha_{k,1} = \eta_1 L_1(y_k)$ and 
%$\alpha_{k,2} = \eta_2 L_2(x_{k+1})$ for $\eta_1, \eta_2 >1$, respectively, and by 
%skipping the sufficient decrease conditions (\ref{lsx}) and (\ref{lsy}).
%\end{remark}

%Let $z_k := (x_k, y_k)$

In the following we set up the theoretical tools for proving a convergence result analogous 
to that of Theorem~\ref{thm:convergence_PALM} {by exploiting the proof of methodology
introduced in \cite{Bolte2014} (see also \cite[Section 3]{iPALM})}.
Standard notation and definitions of non-smooth analysis will be used, see, e.g., \cite{nonsmooth}.
% \ref{app:ns} for completeness.
%We are using standard notation and definitions of nonsmooth analysis which can be found in any
%classical book, e.g. \cite{}, and which are reported in Appendix \ref{app:ns} for completeness.
The convergence of \spalm is then a consequence of the convergence analysis carried 
out in \cite{Bolte2014} and relies on the following lemma. 
 
 \begin{lemma}\label{lem:C2}
Suppose   {Assumption~A  holds}. Let $\{z_k\} =\{(x_k,y_k)\}$ be a bounded sequence generated 
by \spalm from a starting point $z_0$ and let $\omega(z_0)$  be the set of all limit points of $\{z_k\}$. 
Then the following conditions hold.
\begin{description}
\item[C1)] There exists a positive scalar $\gamma_1$ such that
%\begin{equation}\label{C1}
$\gamma_1 \|z_{k+1}-z_k\|_2^2 \le \Psi(z_k) - \Psi(z_{k+1})$;
%\end{equation}
 \item[C2)] There exists a positive scalar $\gamma_2$ such that
for some $w_k \in \partial \Psi(z_k)$ we have 
%\begin{equation}\label{C2}
$\|w_k\|_2 \le \gamma_2 \|z_k - z_{k-1}\|, \mbox{ for } k = 0,1, \dots$;
%\end{equation}
\item[C3)] Each limit point in the set $\omega(z_0)$ is a critical point for $\Psi$. 
\end{description}
\end{lemma}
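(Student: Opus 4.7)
The plan is to verify the three conditions in sequence, adapting the standard three-step strategy of Bolte, Sabach and Teboulle \cite{Bolte2014} to accommodate the fact that in \spalm the stepsizes $\bar\alpha_{k,i}$ arise from Armijo backtracking rather than from a fixed Lipschitz estimate. The backbone of the analysis is the uniform two-sided bound on $\bar\alpha_{k,i}$: by construction $\bar\alpha_{k,i}\le\alpha_{k,i}\le\alpha_{\max}$, while Remark~\ref{rem:finite} gives $\bar\alpha_{k,i}\ge\underline\alpha_i:=\min\{\alpha_{\min},\rho_i(1-\delta_i)/\lambda_i^{+}\}>0$.

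For C1, I would subtract $f_2(y_k)$ from (\ref{lsx}) and $f_1(x_{k+1})$ from (\ref{lsy}), using $\Psi=\Psi_1+f_2=\Psi_2+f_1$, to obtain
$\Psi(x_{k+1},y_k)-\Psi(x_k,y_k)\le -\tfrac{\delta_1}{2\bar\alpha_{k,1}}\|x_{k+1}-x_k\|_2^2$
and the analogous inequality in the $y$-block. Summing the two and using $1/\bar\alpha_{k,i}\ge 1/\alpha_{\max}$ produces C1 with $\gamma_1=\min(\delta_1,\delta_2)/(2\alpha_{\max})$.

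For C2, the idea is to read off a subgradient at $z_k$ from the first-order optimality conditions of the two proximal steps executed at iteration $k-1$. The definition (\ref{eq:prox_map}) of the prox yields
$\bar\alpha_{k-1,1}^{-1}(x_{k-1}-x_k)-\nabla_x H(x_{k-1},y_{k-1})\in\partial f_1(x_k)$
and its $y$-analogue involving $\nabla_y H(x_k,y_{k-1})$. Adding $\nabla_x H(z_k)$ and $\nabla_y H(z_k)$, respectively, produces an element $w_k\in\partial\Psi(z_k)$ whose $x$-block is bounded by $\bar\alpha_{k-1,1}^{-1}\|x_k-x_{k-1}\|_2+\|\nabla_x H(z_k)-\nabla_x H(z_{k-1})\|_2$. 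The first term is controlled by $\underline\alpha_1^{-1}$, and the second by the Lipschitz constant $M$ of $\nabla H$ on the bounded set containing $\{z_k\}$ (Assumption A3 plus the boundedness hypothesis), so it is $\le M\|z_k-z_{k-1}\|_2$. The $y$-block is treated analogously, but only the partial Lipschitz property A4--A5 is needed since both evaluations share the same $x_k$, giving the bound $\lambda_2^{+}\|y_k-y_{k-1}\|_2$. Combining yields $\|w_k\|_2\le\gamma_2\|z_k-z_{k-1}\|_2$.

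For C3, C1 implies that $\Psi(z_k)$ is nonincreasing and, by A1--A2, bounded below, hence convergent; telescoping gives $\sum_k\|z_{k+1}-z_k\|_2^2<\infty$, so $\|z_k-z_{k-1}\|_2\to 0$ and, through C2, $w_k\to 0$. For any $z^*\in\omega(z_0)$ with $z_{k_j}\to z^*$, the conclusion $0\in\partial\Psi(z^*)$ will follow from the closedness of the graph of the limiting subdifferential of a lower semicontinuous function, provided one also has $\Psi(z_{k_j})\to\Psi(z^*)$. This continuity step is where I expect the main technical difficulty, since $f_1,f_2$ are only lower semicontinuous; I plan to handle it exactly as in \cite[Lemma~3 and Lemma~5]{Bolte2014}, obtaining $\limsup_j f_1(x_{k_j})\le f_1(x^*)$ by plugging $x^*$ as a competitor in the minimization that defines the prox at iteration $k_j-1$ and passing to the limit using continuity of $H$ together with $\|z_{k_j}-z_{k_j-1}\|_2\to 0$, and analogously for $f_2$. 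The matching $\liminf$ comes from lower semicontinuity of $\Psi$, so $\Psi(z_{k_j})\to\Psi(z^*)$ and C3 follows.
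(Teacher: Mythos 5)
Your proposal is correct and follows essentially the same route as the paper: the uniform two-sided bound on $\bar\alpha_{k,i}$ from Remark~\ref{rem:finite}, the summation of (\ref{lsx}) and (\ref{lsy}) yielding $\gamma_1=\min\{\delta_1,\delta_2\}/(2\alpha_{\max})$ for C1, and the prox optimality conditions combined with the Lipschitz assumptions for C2--C3. The only difference is that the paper delegates C2 and C3 to the arguments of \cite[Lemmas 4 and 5]{Bolte2014}, whereas you spell out those steps (subgradient construction, use of A3 on the bounded set for the $x$-block and A4--A5 for the $y$-block, and the $\limsup$ argument via the prox competitor), all of which are consistent with the cited proofs.
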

 \begin{proof}
{We first observe that the stepsizes $\bar \alpha_{k,1}$ and
 $\bar \alpha_{k,2}$ remain bounded for all $k$. Indeed,  since $\alpha_{k,1}\in [\alpha_{\min}, \alpha_{\max}]$ we have that  $\bar \alpha_{k,1} = \rho_1^{i_k} \alpha_{k,1} \le \alpha_{\max}$ as $\rho_1 \in (0,1)$ and $i_k \ge0$.
Moreover, $\bar \alpha_{k,1}$ and $\bar \alpha_{k,2}$ are 
uniformly bounded from below, see Remark \ref{rem:finite}.

Item C1) can be proved as follows. Fix $k \ge0$ and sum  the 
inequalities (\ref{lsx}) and (\ref{lsy}), so as to obtain
 \begin{eqnarray*}
\Psi_1(x_{k+1}, y_k) +  \Psi_2(x_{k+1}, y_{k+1})  &\le &\Psi_1(x_k,y_k) + \Psi_2(x_{k+1},y_k) \\
& & \quad - \frac{\delta_1}{2 \bar \alpha_{k,1}} \| x_{k+1}-x_k\|_2^2
- \frac{\delta_2}{ 2 \bar\alpha_{k,2}} \|y_{k+1}-y_k\|_2^2.
\end{eqnarray*}
Recalling the definition of $\Psi_1$ and $\Psi_2$ around (\ref{eq:Psi}) we obtain
 \begin{eqnarray*}
\Psi(x_{k+1}, y_{k+1}) & \le & \Psi(x_k,y_k) - \frac{\delta_1}{2 \bar \alpha_{k,1}} \| x_{k+1}-x_k\|_2^2 - \frac{\delta_2}{ 2 \bar\alpha_{k,2}} \|y_{k+1}-y_k\|_2^2\\
 & \le & \Psi(x_k,y_k) - \frac{\delta_1}{2 \alpha_{\max}} \| x_{k+1}-x_k\|_2^2 - \frac{\delta_2}{ 2 \alpha_{\max}} \|y_{k+1}-y_k\|_2^2 ,
\end{eqnarray*}
from which  Item C1) follows, with $\gamma_1 = \frac{1}{2 \alpha_{\max}}\min \left \{ \delta_1,\delta_2 \right \}$.

The proofs of items C2) and C3) follow the lines of the proofs of \cite[Lemmas 4 and 5]{Bolte2014}.
Indeed,  from the definition of the proximal 
map and the iterative steps (\ref{stepx_spalm}) and (\ref{stepy_spalm}), we have that
$$
 x_{k} = \argmin_{x\in \mathbb{R}^n} \left\{\langle \nabla_x H(x_{k-1}, y_{k-1}),  x-x_{k-1}\rangle +  \frac{1}{2 \bar \alpha_{k,1}} \|x- x_{k-1}\|_2^2 + f_1(x)\right\}
$$
and
$$
 y_{k} = \argmin_{y\in \mathbb{R}^m} \left\{ \langle \nabla_y H(x_{k}, y_{k-1}),y-y_{k-1}\rangle +  \frac{1}{2 \bar \alpha_{k,2}} \|y- y_{k-1}\|_2^2 + f_2(y)\right\} .
$$
Using the fact that $\bar \alpha_{k,1}$ and  $\bar \alpha_{k,2}$ are bounded for all $k$ the results follow.}
 \end{proof}
%  Let $k$ be a positive integer. From the definition of the proximal map and the iterative step (\ref{stepx}), we have that
%  $$
%  x_{k} = \argmin_{x\in \mathbb{R}^n} \left\{ (x-x_{k-1})^T \nabla_x H(x_{k-1}, y_{k-1}) +  \frac{1}{2 \bar \alpha_k} \|x- x_{k-1}\|_2^2 + f(x)\right\}
%  $$
%  and writing down the optimality conditions yields 
%  $$
% \nabla_x H(x_{k-1}, y_{k-1}) +  \frac{1}{\bar \alpha_{k-1}} (x_k- x_{k-1}) + u_k =0,
%  $$
%  where $u_k \in\partial f(x_k)$. Hence, since 
%  $$
% \nabla_x H(x_{k-1}, y_{k-1}) +  u_k = \frac{1}{\bar \alpha_{k-1}} (x_{k-1}- x_{k}),
%  $$
% the vector $(w_k)_1 =  \frac{1}{\bar \alpha_{k-1}} (x_{k-1}- x_{k}) + \nabla_x H(x_{k}, y_{k}) - \nabla_x H(x_{k-1}, y_{k-1}) $ belongs to  $ \partial \Psi_1(z_k)$
% using \cite[Proposition 1]{Bolte2014}.
% Analogously, the vector 
% $(w_k)_2 =  \frac{1}{\bar \beta_{k-1}} (y_{k-1}- y_{k}) + \nabla_x H(x_{k}, y_{k}) - \nabla_x H(x_{k}, y_{k-1}) $  belongs to  $ \partial \Psi_2(z_k)$.
% We now prove that $w_k = ((w_k)_1, (w_k)_2)$ satisfies (\ref{C2}).

We can now state a convergence result for \spalmdot.
\begin{theorem}
\label{thm:convergence_PALM_LS}
Suppose that $\Psi$ is semi-algebraic such that   {Assumption~A  holds}. 
Let $\{z_k\} =\{(x_k,y_k)\}$ be a bounded sequence generated by \spalmdot. Then the sequence 
$\{z_k\}$ has finite length
and converges to a critical point $z^*$ of $\Psi$.
\end{theorem}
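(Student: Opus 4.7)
The plan is to reduce the statement to the abstract convergence framework of \cite{Bolte2014}, using Lemma~\ref{lem:C2} as the main engine. That framework shows that for any sequence satisfying three ingredients (a sufficient decrease property, a subgradient bound in terms of successive iterates, and the fact that limit points are critical), boundedness together with the Kurdyka--\L{}ojasiewicz (KL) property of $\Psi$ is enough to conclude finite length and convergence to a single critical point. Since semi-algebraic functions are well-known to satisfy the KL property (with a desingularising function of the form $\varphi(s)=c s^{1-\theta}$ for some $\theta\in[0,1)$), the semi-algebraicity hypothesis on $\Psi$ gives us the KL inequality at every point.

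Concretely, I would proceed as follows. First, invoke Lemma~\ref{lem:C2} under Assumption~A: this directly supplies conditions C1, C2 and C3 for the sequence $\{z_k\}=\{(x_k,y_k)\}$ generated by \spalmdot. In particular, C1 yields monotonic decrease of $\{\Psi(z_k)\}$, and together with the assumed boundedness of $\{z_k\}$ and the lower semicontinuity of $\Psi$ it gives that $\{\Psi(z_k)\}$ converges to some value $\Psi^{*}$. Next, combine C1 with C2 and C3 in the KL setting: by boundedness, $\omega(z_0)$ is a nonempty compact set on which $\Psi$ is constant equal to $\Psi^{*}$ (a standard consequence of C1 and C3), so the uniformised KL property on a neighbourhood of $\omega(z_0)$ applies.

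The core quantitative step is then the standard telescoping argument: using C2 to control $\partial\Psi(z_k)$ by $\|z_k-z_{k-1}\|$, the KL inequality to turn a concave estimate of $\varphi(\Psi(z_k)-\Psi^{*})$ into a bound on $\|z_{k+1}-z_k\|$, and C1 to convert this into summability, one obtains $\sum_{k}\|z_{k+1}-z_k\|<\infty$. This finite-length property makes $\{z_k\}$ a Cauchy sequence, hence convergent to some $z^{*}\in\omega(z_0)$, and C3 ensures $z^{*}$ is a critical point of $\Psi$. Since all the steps of this argument are written out verbatim in \cite[Thm.~1 and Lemmas~4--6]{Bolte2014} (and reproduced in \cite[Sec.~3]{iPALM}), the theorem follows.

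The only non-routine point, and the one I would emphasise, is checking that the machinery of \cite{Bolte2014} indeed tolerates our inexact, backtracking-based stepsizes rather than the Lipschitz-based ones originally used there. This reduces to verifying that $\bar\alpha_{k,1}$ and $\bar\alpha_{k,2}$ remain in a bounded interval $[\alpha_{\min}',\alpha_{\max}]$ with $\alpha_{\min}'>0$; this was already established in the proof of Lemma~\ref{lem:C2} via Remark~\ref{rem:finite}, so no additional work is required. Hence the proof is essentially a citation of \cite{Bolte2014} applied to Lemma~\ref{lem:C2}.
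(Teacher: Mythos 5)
Your proposal is correct and follows essentially the same route as the paper: the paper's proof of Theorem~\ref{thm:convergence_PALM_LS} consists precisely of invoking Lemma~\ref{lem:C2} and then applying the proof methodology of \cite{Bolte2014}, which is exactly the KL/telescoping argument you spell out. Your additional emphasis on the uniform boundedness of the backtracking stepsizes is the right point to check, and it is already handled in the paper via Remark~\ref{rem:finite} inside the proof of Lemma~\ref{lem:C2}.
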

\begin{proof}
The proof follows by using Lemma~\ref{lem:C2} and applying the proof methodology proposed in \cite{Bolte2014}.
\end{proof}

\vskip 0.1in
\begin{remark}
Algorithm \ref{alg:PALM} and Algorithms \ref{alg:BBstep}-\ref{alg:sPALM} can be extended to 
the general setting involving $p>2$ blocks, that is problems of the form
  \begin{equation}\label{multiterm}
  \min_{x_i\in \mathbb{R}^{n_i}} \Psi(x_1, \dots, x_p) := H(x_1,\dots, x_p)+\sum_{i=1}^p f_i(x_i),
 \end{equation}
 for which Theorems \ref{thm:convergence_PALM} and  \ref{thm:convergence_PALM_LS} hold. 
 When variable blocks are matrices, all PALM-type algorithms can be extended 
to the matrix optimization setting by 
using the trace matrix scalar product and the Frobenius norm in place of the 
vector scalar product and the vector 2-norm, respectively.
\end{remark}
\vskip 0.1in

%%%%%%%%%%%%%%%%%%%%%%%%%%%%%%%%%%%%%%
\section{Application of the PALM framework to \DL} \label{sec:palm_todl}
%In the following we show that
%the PALM framework can be applied to the \DL\ problem both in the matrix and in the 
%tensor setting in order to get a convergent scheme.
%
The PALM methodology can be applied to the \DL\ problem  in the matrix setting; see, e.g.,
\cite{Bao2014,Zhu2020,palmDL}. We provide a general framework for \DL\ leading to
convergent schemes, that can also be employed in the case of the tensor formulation.
%We first briefly review the matrix case that has already been addressed in 
%\cite{Bao2014,Zhu2020,palmDL}. 

In the matrix case, problem (\ref{eq:dl_matrix}) can be equivalently formulated as
\begin{equation}\label{eq:dl_ind}
 \min_{D,X}\|Y-DX\|_F^2+\delta_{\Omega_{n,k}}(D)+\delta_{\Gamma_{k,p}^{(\tau)}}(X),
\end{equation}
where  $\delta_{\Omega_{n,k}}$and $\delta_{\Gamma_{k,p}^{(\tau)}}$ are  indicator functions over the sets $\Omega_{n,k}$ and $\Gamma_{k,p}^{(\tau)}$ defined in (\ref{eq:omega}) and (\ref{eq:gamma}), respectively. 
Given a non-empty and closed set $\Omega\subseteq \mathbb{R}^{m\times n}$, we recall that
the indicator function $\delta_{\Omega}:\mathbb{R}^{m \times n}\rightarrow (-\infty,+\infty]$ is given by
 \begin{equation}
 \delta_{\Omega}(A)=
    \begin{cases}
      0 & \text{if $A\in\Omega$}\\
      +\infty & \text{otherwise} .
    \end{cases}       
\end{equation}

The formulation (\ref{eq:nnpb}) has clearly the form (\ref{eq:dl_ind})  with $H(D,X)=\|Y-DX\|_F^2$, $f_1(D)=\delta_{\Omega_{n,k}}(D)$ and $f_2(X)=\delta_{\Gamma_{k,p}^{(\tau)}}(X)$. 
For these functions, it has been proved  in \cite{Bao2014,palmDL}
that Assumptions~A1-A4 hold and that the Lipschitz moduli for the partial
gradients $\nabla_X H$ and $\nabla_D H$ are given, respectively, by
\begin{equation}\label{eq:lipmat}
L_X = 2\|D^TD\|_2 \quad \mbox{ and } \quad  L_D =2 \|X X^T\|_2.
\end{equation}
%respectively. 

The overall objective function is semi-algebraic: $H$ is a real polynomial function, while 
$f_1$ and $f_2$ 
are indicator functions of semi-algebraic sets, and thus semi-algebraic as well.
Assuming that the sequence of iterates generated by PALM is bounded and being $H$  twice continuously differentiable, PALM 
is thus guaranteed to converge to 
a critical point of problem (\ref{eq:dl_ind}),  see Theorem~\ref{thm:convergence_PALM}. The same argument 
can be applied to
\spalm invoking Theorem~\ref{thm:convergence_PALM_LS}, and to the other PALM variants.
We next give the explicit form of the BB stepsizes,
where $(X_k,D_k)$ play the role of $(x_k,y_k)$ in Algorithms \ref{alg:BBstep}-\ref{alg:sPALM}, and  show  some key bounds.

{%\color{blue}
\vskip 0.1in
\begin{proposition}\label{rem:alfaL}
Let $S_{k} = X_{k+1} - X_{k}$ and $T_k = D_{k+1} - D_{k}$ be computed at the $k$th iteration of  
sPALM applied to problem (\ref{eq:dl_ind}). Assume that 
$\langle S_k, (D_k^T D_k) S_k \rangle \neq0 $ and $\langle T_k,  T_k (X_{k+1} X_{k+1}^T)\rangle\neq0$.
%Given the objective $H(D,X)=\|Y-DX\|_F^2$, 
Then the BB stepsizes take the form
\begin{eqnarray*}
 \alpha_{X_{k+1}}^{BB1}  = & \frac{1}{2}\frac{ \langle S_k,S_k \rangle }{\langle S_k, (D_k^T D_k) S_k \rangle}  \ \mbox{ and }  \
\alpha_{X_{k+1}}^{BB2}  = &   \frac{1}{2} \frac{\langle S_k, (D_k^T D_k) S_k\rangle}{\langle  (D_k^T D_k) S_k, (D_k^T D_k) S_k \rangle}, \\
 \alpha_{D_{k+1}}^{BB1}  = & \frac{1}{2} \frac{ \langle T_k,T_k \rangle }{\langle T_k, T_k (X_{k+1} X_{k+1}^T) \rangle} \ \mbox{ and } \
\alpha_{D_{k+1}}^{BB2}  = &  \frac{1}{2} \frac{\langle T_k, T_k (X_{k+1} X_{k+1}^T) \rangle}{\langle  T_k (X_{k+1} X_{k+1}^T) ,T_k (X_{k+1} X_{k+1}^T)  \rangle },
\end{eqnarray*}
and the following bounds hold
%in (\ref{eq:bb12}) 
%$$
%\frac 1 {L_D} \le \alpha_X^{BB1}, \qquad
%\frac 1 {L_D} \le \alpha_X^{BB2} \le \frac {1}{\sigma_{\min}^2(D)},
%$$
%and $ \alpha_X^{BB2}\le \alpha_X^{BB1}$, where $\sigma_{\min}(D)$ is the smallest nonzero singular value of $D$.
$$%\begin{equation}
{\frac 1 {L_{X_k}} \le \alpha_{X_{k+1}}^{BB2} \le  \min \left \{ \alpha_{X_{k+1}}^{BB1}, \frac {1}{2\sigma_{\min}^2(D_k)}\right \} } %\label{ineq_X}
$$%\end{equation}
and
\begin{equation}
{
\frac 1 {L_{D_{k+1}}} \le \alpha_{D_{k+1}}^{BB2} \le \min \left \{ \alpha_{D_{k+1}}^{BB1}, \frac {1}{2\sigma_{\min}^2({X_{k+1}})} \right \} \label{ineq_D}
}
\end{equation}
where $\sigma_{\min}(D_k)$ and  $\sigma_{\min}({X_{k+1}})$ are the smallest nonzero singular values of $D_k$ and ${X_{k+1}}$, respectively, and $L_{D_k}$ and $L_{X_{k+1}}$ are given in (\ref{eq:lipmat}).
\end{proposition}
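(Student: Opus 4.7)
The plan is to derive the explicit BB formulas by direct substitution and then to establish the stated chain of inequalities by a compact spectral argument based on the positive semidefiniteness of $A:=D_k^T D_k$ and $B:=X_{k+1}X_{k+1}^T$. Since $H(D,X)=\|Y-DX\|_F^2$, direct differentiation gives $\nabla_X H(X,D)=2D^T(DX-Y)$ and $\nabla_D H(X,D)=2(DX-Y)X^T$, so that the gradient differences fed into Algorithm~\ref{alg:BBstep} reduce to $g_{k,1}=\nabla_X H(X_{k+1},D_k)-\nabla_X H(X_k,D_k)=2AS_k$ and $g_{k,2}=\nabla_D H(X_{k+1},D_{k+1})-\nabla_D H(X_{k+1},D_k)=2T_k B$. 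Plugging these into (\ref{eq:bb12}) with the Frobenius inner product in place of $\langle\cdot,\cdot\rangle$ yields the four stated identities at once.

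For the bounds on $\alpha_{X_{k+1}}^{BB2}$, I would diagonalize $A=U\,\mathrm{diag}(\sigma_i^2(D_k))\,U^T$ and set $w_i:=\|(U^TS_k)(i,:)\|_2^2\ge 0$, so that, by trace cyclicity, $\langle S_k, AS_k\rangle_F=\sum_i\sigma_i^2(D_k)\,w_i$ and $\|AS_k\|_F^2=\sum_i\sigma_i^4(D_k)\,w_i$. The lower bound $1/L_{X_k}\le\alpha_{X_{k+1}}^{BB2}$ is then the elementary inequality $\sum_i(\sigma_{\max}^2(D_k)-\sigma_i^2(D_k))\sigma_i^2(D_k)w_i\ge 0$, true termwise since $\sigma_{\max}^2(D_k)=\|A\|_2=L_{X_k}/2$ is the largest eigenvalue of $A$, while the middle step $\alpha_{X_{k+1}}^{BB2}\le\alpha_{X_{k+1}}^{BB1}$ is the Cauchy--Schwarz inequality applied to the two sequences $(\sqrt{w_i})$ and $(\sigma_i^2(D_k)\sqrt{w_i})$.

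The delicate step, and the one I expect to be the main obstacle, is the upper bound $\alpha_{X_{k+1}}^{BB2}\le 1/(2\sigma_{\min}^2(D_k))$: because $k>n$ in the DL setting forces $A$ to be singular, $\sigma_{\min}^2(D_k)$ denotes the smallest \emph{positive} eigenvalue of $A$ and the naive estimate $A\preceq(\sigma_{\min}^2)^{-1}A^2$ fails globally. The key observation is that any index $i$ with $\sigma_i(D_k)=0$ contributes zero to both sums, so the ratio is governed only by the indices with $\sigma_i^2(D_k)\ge\sigma_{\min}^2(D_k)>0$; the inequality then reads $\sum_i(\sigma_i^2(D_k)-\sigma_{\min}^2(D_k))\sigma_i^2(D_k)w_i\ge 0$, which is termwise nonnegative on that effective set, and the standing hypothesis $\langle S_k,AS_k\rangle\ne 0$ guarantees that this effective set is nonempty so the ratio is well-defined. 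The three inequalities for $\alpha_{D_{k+1}}^{BB2}$ follow by the same scheme after diagonalizing $B=V\,\mathrm{diag}(\sigma_i^2(X_{k+1}))\,V^T$ and using $\|T_k B\|_F^2=\mathrm{trace}((T_k^T T_k)B^2)$, with the role of $w_i$ played by the nonnegative diagonal entries of $V^T(T_k^T T_k)V$; the hypothesis $\langle T_k, T_k B\rangle\ne 0$ plays the same well-posedness role as in the $X$-case.
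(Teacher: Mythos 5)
Your proposal is correct and follows essentially the same route as the paper: both derive the explicit BB formulas from $g_{k,1}=2(D_k^TD_k)S_k$ and $g_{k,2}=2T_k(X_{k+1}X_{k+1}^T)$ and then bound the resulting Rayleigh quotients of the positive semidefinite partial Hessians. Your explicit eigenbasis expansion with weights $w_i$ (termwise inequalities plus Cauchy--Schwarz, and discarding the zero eigenvalues for the $\sigma_{\min}$ bound) is a concrete rendering of the paper's argument, which phrases the same steps via the identity $\alpha_{X_{k+1}}^{BB2}=\alpha_{X_{k+1}}^{BB1}\cos^2\phi_k$ and the observation that $(D_k^TD_k)^{1/2}S_k$ lies in the range of $D_k^TD_k$.
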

\begin{proof}
From the definitions of $\alpha_{k+1}^{BB1}$ and $\alpha_{k+1}^{BB2}$ in (\ref{eq:bb12}) 
and from observing that $\nabla_X H = - 2D^T(Y-DX)$ and $\nabla_D H =- 2(Y-DX)X^T$, we get the form
of the BB stepsizes $ \alpha_{X_{k+1}}^{BB1},  \alpha_{X_{k+1}}^{BB2},  \alpha_{D_{k+1}}^{BB1}$ and $\alpha_{D_{k+1}}^{BB2}$. Moreover, the partial Hessians of $H$ have the form $\nabla_{XX}H = I \otimes2(D^TD)$, $\nabla_{DD}H =2(XX^T) \otimes I$ and are positive semidefinite. Therefore the 2-norm  Lipschitz constants are $L_X = \lambda_{\max} (\nabla_{XX}H)$ and $L_D = \lambda_{\max}(\nabla_{DD}H).
$

Let us consider the stepsizes $\alpha_{X_{k+1}}^{BB1}$ and $\alpha_{X_{k+1}}^{BB2}$.
%and omit for simplicity the subscript $k$. 
We observe that they are the reciprocal of Rayleigh quotients 
for $2(D_k^T D_k)$ and $2(D_k D_k^T)$, using $S_k$ and $D_k S_k$, respectively, as %$\alpha_{X}^{BB2}$ can be written as
$
\alpha_{X_{k+1}}^{BB2} = \frac{1}{2} \|D_k S_k\|_F^2/\langle  D_k S_k, (D_k D_k^T) (D_k S_k) \rangle
$.
Moreover, 
\begin{eqnarray*}
\alpha_{X_{k+1}}^{BB2} & = &\frac{1}{2} \frac{\langle S_k,  S_k\rangle}{\langle S_k, (D_k^T D_k) S_k \rangle}\frac{\langle S_k, (D_k^T D_k) S_k\rangle^2}{\langle  (D_k^TD_k) S_k, (D_k^T D_k) S_k \rangle\langle S_k,  S_k\rangle}\\
& = & \alpha_{X_{k+1}}^{BB1}  \frac{\| S_k\|_F^2 \| (D_k^T D_k) S_k\|_F^2 \cos^2\phi_k}{\| S_k\|_F^2 \|( D_k^T D_k) S_k\|_F^2} = \alpha_{X_{k+1}}^{BB1}\cos^2\phi_k
\end{eqnarray*}
where $\phi_k$ is the angle between {the vectorization $\hat s_k$ of $S_k$ and  $(I\otimes (D_k^TD_k))\hat s_k$}.  %$vec(S)$ and $(I\otimes D^TD)vec(S)$.
Therefore,
$$
\alpha_{X_{k+1}}^{BB1}\ge  \alpha_{X_{k+1}}^{BB2} \ge \frac{1}{\lambda_{\max}(2 (D_k^TD_k))} = \frac{1}{L_{X_k}}.
$$
Finally, let ${\tt D}_k = D_k^TD_k\ge 0$. Then 
{\small $ \alpha_{X_{k+1}}^{BB2} = \frac{1}{2}  \|{\tt D}_k^\frac 1 2 S_k\|_F^2/\|({\tt D}_k^\frac 1 2 ){\tt D}_k^\frac 1 2 S_k\|_F^2 \le 1/(2\sigma^{2}_{\min}({D_k}))$},
%{\color{blue}ho aggiunto le norme di Frob, ci dovremmo mettere $\sigma_{\min}({\tt D})$?}
where the last inequality follows from the fact that ${\tt D}_k^\frac 1 2 S_k$ belongs to the range of ${\tt D}_k$.

The inequalities for $ \alpha_{D_{k+1}}^{BB1} $ and $ \alpha_{D_{k+1}}^{BB2} $ in (\ref{ineq_D})  can be derived analogously.
\end{proof}
}
\vskip 0.1in
%Analogously, we have that 
%$\alpha_D^{BB1}\ge  \alpha_D^{BB2} \ge \frac{1}{\lambda_{max}(XX^T)} = \frac{1}{L_D}$.
%{\color{magenta}ADD upper bound?}

Proposition~\ref{rem:alfaL} {gives bounds on 
the BB stepsizes for the case of semidefinite Hessian and shows that  longer steps are made {than with the} 
reciprocal of the Lipschitz constants.
These bounds generalize to the singular case known bounds for strictly quadratic functions, see,
 e.g., \cite{Zanni}.
In the strictly convex quadratic case the BB stepsizes are able to sweep the spectrum 
of the Hessian matrix yielding faster convergence than using a standard steepest-descent method.
  An analogous faster convergence is expected in practice in the (only) convex case.}

%versione vecchia
%Proposition~\ref{rem:alfaL} shows that 
%longer steps are made using the BB stepsizes than using the reciprocal of the Lipschitz constants.
%Moreover, in the strictly convex quadratic case the 
%BB stepsizes are able to sweep the spectrum of the Hessian matrix yielding faster convergence 
%than using a standard steepest-descent method, see e.g \cite{Zanni}.  
%Therefore, faster convergence is expected in practice using the BB stepsizes.
%\end{remark}

%\vskip 0.1in

Similarly, this approach can be applied  to the classification problem (\ref{eq:classification_DWX}),
see \cite{Brandoni.PhD.22}.
% writing the problem as
%\begin{equation}
%  \min_{D,X,W}\|Y-DX\|_F^2+\gamma \|C-WX\|_F^2+\delta_{\Omega_{n,k}}(D)+\delta_{\Gamma_{k,p}^{(\tau)}}(X),
%  \label{eq:DWX_palm}
%\end{equation}
%and computing the associated   Lipschitz moduli 
%$$
%L_X = 2\|D^TD\|_2+2\gamma  \|W^TW\|_2,  \quad L_D = 2\|X X^T\|_2, 
%\quad \mbox{ and } \quad  L_W =2 {\gamma} \|X X^T\|_2 .
%$$
 
We next illustrate the main contribution of this section by showing the
 application of the PALM framework to the tensor formulation (\ref{eq:dl_tt}), which
can be rewritten as

{\footnotesize
\begin{equation}
\min_{G_1, \mathcal{G}_2 ,G_3,\mathcal{X}} H(G_1, \mathcal{G}_2 ,G_3,\mathcal{X}) +\delta_{\Gamma_{k,n_en_p}^{(\tau)}}(\mathcal{X}_{[1]}) + \delta_{\Theta_{n_1,r_1}}(G_1) + \delta_{\Theta_{r_1 n_2,r_2}}((\mathcal{G}_2)_{[2]})+ \delta_{\Omega_{r_2, k}}(G_3),
 \label{eq:PALM_tt}
\end{equation}
}
where 
%\begin{equation}\label{H_dltt}
$H(G_1, \mathcal{G}_2 ,G_3,\mathcal{X})=
\|\mathcal{Y}-(G_1\times_2^1 \mathcal{G}_2 \times_3^1 G_3)\times_3^1\mathcal{X}\|_F^2$. 
%\end{equation}
%
%Similarly, for the tensor classification, we can rewrite (\ref{eq:classification_TT}) as
%\begin{eqnarray} \label{eq:PALM_classification_tt}
%& & \min_{G_1, \mathcal{G}_2 ,G_3,\mathcal{X},\mathcal{W}} H_C(G_1, \mathcal{G}_2 ,G_3,\mathcal{X},\mathcal{W}) 
%+\delta_{\Gamma_{k,n_en_p}^{(\tau)}}(\mathcal{X}_{[1]}) \\
%& & \hskip 0.8in % +\delta_{\Gamma_{k,n_en_p}^{(\tau)}}(\mathcal{X}_{[1]})
% + \delta_{\Theta_{n_1,r_1}}(G_1) + \delta_{\Theta_{r_1 n_2,r_2}}((\mathcal{G}_2)_{[2]})+ \delta_{\Omega_{r_2, k}}(G_3),
%\nonumber
%\end{eqnarray}
%where
%%\begin{equation}\label{eq:H_tt_W}
%$H_C(G_1, \mathcal{G}_2 ,G_3,\mathcal{X},\mathcal{W})=
%\|\mathcal{Y}-(G_1\times_2^1 \mathcal{G}_2 \times_3^1 G_3)\times_3^1\mathcal{X}\|_F^2+
%\gamma \|\mathcal{C}-\mathcal{X}\times_1 W\|_F^2$.
%%\end{equation}
%
%We now focus on the tensor \DL\ formulation (\ref{eq:PALM_tt}) and postpone comments on the classification case  (\ref{eq:PALM_classification_tt}) to the end of this section.
%
%In the following 
We determine explicit values\footnote{If $G_1, G_2$ did not have orthonormal
columns, upper bounds for the Lipschitz constants could still be obtained.}
for the Lipschitz constants of the partial gradient of %the smooth function 
$H$ in  (\ref{eq:PALM_tt}), where 
$L_S$ is the Lipschitz constant of $H$ corresponding to the variable $S$ in the $\|\cdot\|_2$ norm.
 \begin{proposition}
 \label{prop:lipschitz_PALMTT}
 Set $G_2=\left(\mathcal{G}_2\right)_{[2]}$, $X=\mathcal{X}_{[1]}$, $Y=\mathcal{Y}_{[2]}$ and consider 
  \begin{equation*}
  H(G_1, G_2 ,G_3,X)=\left\|Y-\left(I_{n_2}\otimes G_1\right)G_2 G_3X\right\|_F^2.
  \end{equation*}
 Then the partial gradients of $H$ are globally Lipschitz. For $G_1, G_2$ having orthonormal columns,
 the Lipschitz constants satisfy
%Moreover, the Lipschitz constants for $G_1 \in \Theta_{n_1,r_1}$ and $G_2 \in \Theta_{r_1 n_2,r_2}$  are:
$$
L_X = 2\|G_3\|_2^2, \quad L_{G_2} = 2 \|G_3X\|_2^2, \quad L_{G_3} =  2 \|X\|_2^2 ,
$$
and $L_{G_1} =  2\|\sum_{i=1}^p \left(A_i A_i^T\right)\|_2$, where $A_i\in\mathbb{R}^{r_1\times n_2}$ is the 
matricization of the $i$th column of  $A=G_2 G_3 X$.
%  \begin{enumerate}[label=(\roman*)]
%  \item $L_X = 2\|G_3\|_2^2$  %\left\|G_3^T G_3\right\|_2$
%  \item $L_{G_1} =  2\left\|\displaystyle\sum_{i=1}^p \left(A_i A_i^T\right)\right\|_2$, where $A_i\in\mathbb{R}^{r_1\times n_2}$ is the matricization of the $i$th column of  $A=G_2 G_3 X$,
%   \item $L_{G_2} = 2 \|G_3X\|_2^2$ ,  % \|G_3 X X^T G_3^T\|_2$,\\ 
%  \item $L_{G_3} =  2 \|X\|_2^2$ . % \left\|X X^T\right\|_2$
%  \end{enumerate}
%
 \end{proposition}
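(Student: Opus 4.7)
Since $H$ is quadratic in each block variable when the others are fixed, every partial gradient is an affine map, and its Lipschitz constant is exactly the operator 2-norm of the (constant) partial Hessian. My plan is to compute each partial Hessian as a linear operator on the ambient matrix space, then take its spectral norm, exploiting the orthonormality of $G_1$ and of the columns of $G_2$ to collapse the inner factors.

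For the three ``easy'' blocks $X, G_3, G_2$, set $B = (I_{n_2}\otimes G_1)G_2$. The key observation is that $B$ has orthonormal columns: indeed $B^T B = G_2^T (I_{n_2}\otimes G_1^T G_1) G_2 = G_2^T G_2 = I_{r_2}$ using the orthonormality hypotheses, and by the mixed-product rule for Kronecker products. Therefore:
\begin{itemize}
\item For $X$: $\nabla_X H = -2 (BG_3)^T(Y-BG_3 X)$, whose linear part is $X\mapsto 2 G_3^T(B^T B) G_3 X = 2 G_3^T G_3 X$, with operator 2-norm $2\|G_3\|_2^2$.
\item For $G_3$: $\nabla_{G_3} H = -2 B^T(Y-BG_3 X)X^T$, linear part $G_3\mapsto 2 G_3 (XX^T)$, operator norm $2\|X\|_2^2$.
\item For $G_2$: $\nabla_{G_2} H = -2(I_{n_2}\otimes G_1)^T(Y-(I_{n_2}\otimes G_1)G_2 G_3 X)(G_3 X)^T$, linear part $G_2 \mapsto 2 G_2 (G_3 X)(G_3 X)^T$, operator norm $2\|G_3 X\|_2^2$.
\end{itemize}
In each case I would vectorize and identify the Hessian with a Kronecker product whose spectral norm is the product of the spectral norms of the factors.

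The only real obstacle is the $G_1$ block, since $G_1$ sits inside a Kronecker product and the other orthonormality assumption is not exploited here. My approach is to use the identity $(I_{n_2}\otimes G_1)\mathrm{vec}(A_i)=\mathrm{vec}(G_1 A_i)$, where $A_i\in\mathbb{R}^{r_1\times n_2}$ denotes the reshape of the $i$th column $a_i$ of $A = G_2 G_3 X$, and similarly reshape the $i$th column of $Y$ as $Y_i\in\mathbb{R}^{n_1\times n_2}$. This reduces the objective to the decoupled sum
\begin{equation*}
H = \sum_{i=1}^{p}\|Y_i - G_1 A_i\|_F^2 .
\end{equation*}
Differentiating then yields $\nabla_{G_1} H = -2\sum_i Y_i A_i^T + 2 G_1 \sum_i A_i A_i^T$, so the linear part is $G_1 \mapsto 2 G_1 \bigl(\sum_i A_i A_i^T\bigr)$. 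Its operator 2-norm is $2\bigl\|\sum_i A_i A_i^T\bigr\|_2$, giving $L_{G_1}$.

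Finally, to conclude I would remark that each of these bounds is independent of the argument of the corresponding partial gradient (they depend only on the \emph{other} blocks), so that global Lipschitz continuity in the chosen variable holds with the stated constant. No tightness argument is needed since the statement claims only upper bounds of the Lipschitz moduli; however, in each case above the bound is in fact the exact Lipschitz constant because the Hessian operator is realized by matrices/vectors saturating the spectral norm.
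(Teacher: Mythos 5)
Your proposal is correct and follows essentially the same route as the paper: compute each partial gradient explicitly, use the orthonormality of $G_1$ and $G_2$ (equivalently, of $(I_{n_2}\otimes G_1)G_2$) to collapse the inner factors, and read off the Lipschitz constant as the norm of the resulting linear map — your partial-Hessian/operator-norm phrasing is just the affine-gradient restatement of the paper's difference-of-gradients computation, and your column-wise reshaping for the $G_1$ block is exactly how the paper obtains $\nabla_{G_1}H=2\sum_i(-Y_i+G_1A_i)A_i^T$ and $L_{G_1}=2\|\sum_i A_iA_i^T\|_2$.
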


 \begin{proof}
  By direct computation the following expressions for the partial gradients of $H$ hold:
\begin{eqnarray*}
\nabla_X H &= & {2\, \left(\left(I_{n_2}\otimes G_1\right)G_2 G_3\right)^T(-Y + \left(I_{n_2}\otimes G_1\right)G_2 G_3 X)} \\
%\nabla_X H &= &-2 \left(\left(I_{n_2}\otimes G_1\right)G_2 G_3\right)^T Y +2 \left(\left(I_{n_2}\otimes G_1\right)G_2 G_3\right)^T \left(I_{n_2}\otimes G_1\right)G_2 G_3 X \\
\nabla_{G_1} H &=& 2 \sum_{i=1}^p (Y_i+G_1 A_i)A_i^T \\
% &=&{ 2\, \sum_{i=1}^p 
%  \left(y_i^T + vec(G_1)^T\left(A_i\otimes I_{n_1}\right)\right) \left(A_i^T\otimes I_{n_1}\right)}\\
%\nabla_{G_1} H &=&2\, \sum_{i=1}^p 
%  \left(y_i^T\left(A_i^T\otimes I_{n_1}\right)+ vec(G_1)^T\left(A_i\otimes I_{n_1}\right)\left(A_i^T\otimes I_{n_1}\right)\right)\\
%   \left(a_i^T\left(mat(y_i)^T\otimes I_{n_1}\right)T_{n_1 r_1}+y_i^T\left(A_i^T\otimes I_{n_1}\right)+2 vec(G_1)^T\left(A_i\otimes I_{n_1}\right)\left(A_i^T\otimes I_{n_1}\right)\right)$\\
  %where $T_{n_1r_1}\in\mathbb{R}^{n_1r_1\times n_1r_1}$ is a permutation matrix that transforms $vec(G_1)$ in $vec(G_1^T)$.\\
  \nabla_{G_2}H &=&{2 \left(I_{n_2}\otimes G_1\right)^T \left(-Y + \left(I_{n_2}\otimes G_1\right)  G_2 G_3 X\right)  (G_3 X)^T} \\
%  \nabla_{G_2}H &=& -2 \left(I_{n_2}\otimes G_1\right)^T Y (G_3 X)^T+ 2 \left(I_{n_2}\otimes G_1\right)^T \left(I_{n_2}\otimes G_1\right)  G_2 G_3 X  (G_3 X)^T \\
 \nabla_{G_3}H &=&{2 G_2^T \left(I_{n_2}\otimes G_1\right)^T \left (-Y  + \left(I_{n_2}\otimes G_1\right)G_2 G_3 X\right )  X^T}
% \nabla_{G_3}H &=&-2G_2^T \left(I_{n_2}\otimes G_1\right)^T Y X^T + 2 G_2^T\left(I_{n_2}\otimes G_1\right)^T \left(I_{n_2}\otimes G_1\right)G_2 G_3 X X^T\\
\end{eqnarray*}
where % $y_i$ denotes the $i$th column of $Y$ and 
$A_i\in\mathbb{R}^{r_1\times n_2}$ {and  $Y_i\in\mathbb{R}^{n_1\times n_2} $ are the matricization of the $i$th column of  $A=G_2 G_3 X$ and $Y$, respectively}.
 
Using these expressions we show that each partial gradient is globally Lipschitz.  
In doing so, we derive Lipschitz constants % where this is possible, or $L$-smoothness constants,
by exploiting the orthonormality of the columns of $G_1$ and $G_2$.
For any $\hat{X}$ and $\tilde{X}$ we get 
  \begin{eqnarray}
  \label{eq:nabla_x}
 &&\|\nabla_X H(G_1, G_2 ,G_3,\hat{X})- \nabla_X H(G_1, G_2 ,G_3,\tilde{X}) \|_2 \nonumber \\
  &&=2\|G_3^TG_2^T(I_{n_2}\otimes G_1)^T(I_{n_2}\otimes G_1)G_2 G_3 (\hat{X}-\tilde{X})\|_2 \\
  &&=2\|G_3^T G_3 (\hat{X}-\tilde{X})\|_2 
%  && \leq 2\|G_3^TG_2^T(I_{n_2}\otimes G_1)^T (I_{n_2}\otimes G_1)G_2 G_3\|_2\,\,\|\hat{X}-\tilde{X}\|_2
\le L_X \|\hat{X}-\tilde{X}\|_2, \nonumber 
  %\leq 2 \left\|G_3^T G_3\right\|_2 \left\|\hat{X}-\tilde{X}\right\|_2
  \end{eqnarray}
%Using the orthonormality of the columns of $G_1$ and $G_2$, we get from (\ref{eq:nabla_x}) that 
%$L_X = 2 \left\|G_3^T G_3\right\|_2=2\|G_3\|_2^2$, where $L_X$ is the Lipschitz constant of $\nabla_X H$.
with $L_X = 2 \left\|G_3^T G_3\right\|_2=2\|G_3\|_2^2$. %where $L_X$ is the Lipschitz constant of $\nabla_X H$.
% Thus the Lipschitz constant $L_X$ of $\nabla_X f$ is bounded by $\tilde{L}_X=2\left\|G_3^T G_3\right\|_2$.
In a similar manner we obtain the following inequalities for $\nabla_{G_i}H$, $i=1,2,3$. 

In particular, for any $\hat{G_1}$ and $\tilde{G_1}$ we get 
 \begin{eqnarray*}
  &&\|  \nabla_{G_1} H(\hat{G}_1, G_2 ,G_3,X)- \nabla_{G_1} H(\tilde{G}_1, G_2 ,G_3,X)\|_2\\
&&= 2 \|\sum_{i=1}^p (\hat{G}_1-\tilde{G}_1)A_i A_i^T \| \le L_{G_1} \|(\hat{G}_1-\tilde{G}_1)\| , 
%  &&={ 2\|(vec(\hat{G}_1)-vec(\tilde{G}_1))^T\sum_{i=1}^p (A_i\otimes I_{n_1})(A_i\otimes I_{n_1})^T\|_2}\\
 % &&\leq{ 2 {\|\sum_{i=1}^p A_i A_i^T\|_2 \|vec(\hat{G}_1)-vec(\tilde{G}_1)\|_2 }}  %(controllare)
  \end{eqnarray*}
with $L_{G_1}= 2\|\sum_{i=1}^p A_i A_i^T\|_2$. Moreover, for any   $\hat{G_2}$ and $\tilde{G_2}$,  we have
 % Using the orthogonality of $G_1$, for any $\hat{G_2}$ and $\tilde{G_2}$,  we obtain
  \begin{eqnarray}
  \label{eq:nabla_g2}
   &&\|  \nabla_{G_2} H(G_1, \hat{G}_2 ,G_3,X)-
  \nabla_{G_1} H(G_1, \tilde{G}_2 ,G_3,X) \|_2\nonumber  \\
  &&= 2 \|(I_{n_2}\otimes G_1)^T (I_{n_2}\otimes G_1)  (\hat{G}_2-\tilde{G}_2) G_3 X  (G_3 X)^T\|_2\\
  &&= 2 \|(\hat{G}_2-\tilde{G}_2) G_3 X  (G_3 X)^T\|_2 \le 
    L_{G_2} \|(\hat{G}_2-\tilde{G}_2)\|_2,  \nonumber
%\\
%  && \leq 
%{2 \|(I_{n_2}\otimes G_1)^T (I_{n_2}\otimes G_1)\|_2\|G_3 X  (G_3 X)^T\|_2 }
%\|\hat{G}_2-\tilde{G}_2\|_2 \nonumber.
  \end{eqnarray}
%Thus, imposing the orthogonality of $G_1$ in (\ref{eq:nabla_g2}), 
where $L_{G_2} = 2\|G_3 X X^T G_3^T\|_2=2\|G_3X\|_2^2$.
Finally, for any $\hat{G_3}$ and $\tilde{G_3}$ we have
%, using the orthogonality of $G_1$ and $G_2$, we have
\begin{eqnarray} \label{eq:nabla_g3}
   &&\|  \nabla_{G_3} H(G_1, G_2 ,\hat{G}_3,X)- \nabla_{G_3} H(G_1, G_2 ,\tilde{G}_3,X) \|_2 \nonumber \\
  && =2 \left\| G_2^T\left(I_{n_2}\otimes G_1\right)^T \left(I_{n_2}\otimes G_1\right)G_2 
(\hat{G}_3-\tilde{G}_3) X X^T\right\|_2 \nonumber \\
  && =2 \| (\hat{G}_3-\tilde{G}_3) X X^T\|_2 
\le
   L_{G_3} \|\hat{G}_3-\tilde{G}_3\|_2 , \nonumber 
%  && =2 \left\| G_2^T\left(I_{n_2}\otimes G_1\right)^T \left(I_{n_2}\otimes G_1\right)G_2 \left(\hat{G}_3-\tilde{G}_3\right)\right\|_2 
%\left\| X X^T\right\|_2 = L_{G_3} \left\| X X^T\right\|_2.
%  &&{ \leq 2\left\| G_2^T\left(I_{n_2}\otimes G_1\right)^T \left(I_{n_2}\otimes G_1\right)G_2 \right\|_2 \left\|X X^T\right\|_2 \left\|\hat{G}_3-\tilde{G}_3\right\|_2.}
\end{eqnarray}
%for any  $\hat{G_3}$ and $\tilde{G_3}$.
%   $L_{G_2}\leq 2\left\|\left(I_{n_2}\otimes G_1\right)^T \left(I_{n_2}\otimes G_1\right)\right\|_F \|G_3 X X^T G_3^T\|_F=2\sqrt{n_2}\|G_1^TG_1\|_F\|G_3 X X^T G_3^T\|_F$,\\ 
%   $L_{G_3}\leq 2 \left\| G_2^T\left(I_{n_2}\otimes G_1\right)^T \left(I_{n_2}\otimes G_1\right)G_2\right\|_F \|X X^T\|_F$\\
where % can use once again the orthogonality condition of the TT-cores in (\ref{eq:nabla_g3}) to obtain  
$L_{G_3} = 2 \|X X^T\|_2=2 \|X\|_2^2$.
 \end{proof}
% Notice that  the Lipschitz moduli are bounded even without the orthogonality conditions on the first two TT-cores $G_1$ and $G_2$. However in this case one can only estimate an upper bound of these constants starting from the expressions of the partial gradients.

%{\color{blue}In the following the obtained upper bounds of the Lipschitz constants will be used in 
%place of the optimal values for computational purposes, whenever the latter are unavailable. (TOGLIERE)}

As an implementation remark, we observe that the computation of the partial gradients 
can avoid the explicit calculation of the Kronecker product $I\otimes G_1$. Indeed,
 using the orthogonality of $I\otimes G_1$ and the properties of the Kronecker product, the 
partial gradient $\nabla_X H$ in Proposition~\ref{prop:lipschitz_PALMTT} above can be computed as
 $\nabla_X H = G_3^TG_2^T (-\tilde{G}^T  + G_2 G_3 X)$,
where $\tilde{G}=\mathtt{reshape}(G_1' \mathtt{reshape}(Y,[n_1,n_2 p]),[r_1 n_2,p])$.
The same applies to all the partial gradients in Proposition~\ref{prop:lipschitz_PALMTT}.

\vskip 0.1in

{Since PALM-type algorithms at each iteration require the computation of a proximal step in each variable block}, 
we now give the formal expression of proximal operators for the three indicator functions in (\ref{eq:PALM_tt}). 
We recall that the proximal map of an indicator function $\delta_{\Omega}$ over a non-empty and 
closed set $\Omega \subseteq \mathbb{R}^{m\times n} $ is the multi-valued projection $P_{\Omega}: \mathbb{R}^{m\times n}\rightrightarrows \Omega$ 
such that 
 \begin{equation}
 \label{eq:projection}
 P_{\Omega}(A)=\argmin_{B\in\Omega}{\|A-B\|_F}.
\end{equation}
Notice that if $\Omega$ is a convex set the projection map is single-valued. The following proposition gives
a closed form expression for the corresponding projection operators over the sets $\Theta_{m,n}$,  $\Omega_{m,n}$ and  $\Gamma_{m,n}^{(\tau)}$.

%
%We can notice that also in this setting Assumption \ref{ass:fg} is satisfied. Thus we can use either \spalmtt  or PALM TT algorithm to converge to a critical point of (\ref{eq:PALM_classification_tt}).

\begin{proposition}
 Let $A\in\mathbb{R}^{m\times n}$.
 \begin{enumerate}[label=\roman*.]
  \item Let  ${\Theta_{m,n}}\subseteq \mathbb{R}^{m\times n}$ be defined  in  (\ref{eq:theta}).
Then $P_{\Theta_{m,n}}(A)=UV^T$,
%  the projection operator $P_{\Theta_{m,n}}: \mathbb{R}^{m\times n}\rightrightarrows \Theta_{m,n}$ is given by 
% \begin{equation*}
% P_{\Theta_{m,n}}(A)=UV^T,
% \end{equation*}
where $U$ and $V$ are respectively the left and right singular matrices of $A$. 
\item Let $\Omega_{m,n}\subseteq \mathbb{R}^{m\times n}$ be defined  in  (\ref{eq:omega}).
Then $P_{\Omega_{m,n}}(A)=AS^{-1}$,
%the projection operator $P_{\Omega_{m,n}}: \mathbb{R}^{m\times n}\rightrightarrows \Omega_{m,n}$ is given by 
% \begin{equation*}
%  P_{\Omega_{m,n}}(A)=AS^{-1},
% \end{equation*}
where $S\in\mathbb{R}^{n\times n}$ is a diagonal matrix whose diagonal elements are the norm of the columns of $A$.
\item  Let $\Gamma_{m,n}\subseteq \mathbb{R}^{m\times n}$  be defined  in  (\ref{eq:gamma}). Then 
$P_{\Gamma_{m,n}}(A)={\tt hard}_{\tau}(A)$,
%the projection operator $P_{\Gamma_{m,n}}: \mathbb{R}^{m\times n}\rightrightarrows \Omega_{m,n}$ is given by 
% \begin{equation*}
% P_{\Gamma_{m,n}}(A)={\tt hard}_{\tau}(A),
% \end{equation*}
where  ${\tt hard}_{\tau}$ is the 
hard-thresholding function that selects the $\tau$ largest elements (in absolute value) of each column of $A$ and zeroes all the others. 
 \end{enumerate}
%  and consider the set ${\Theta_{m,n}}\subseteq \mathbb{R}^{m\times n}$ of $m\times n$ matrices with orthonormal columns , then  the projection operator $P_{\Theta_{m,n}}: \mathbb{R}^{m\times n}\rightrightarrows \Theta_{m,n}$ is given by 
%  \begin{equation*}
%  P_{\Theta_{m,n}}(A)=UV^T,
%  \end{equation*}
% where $U$ and $V$ are respectively the left and right singular vectors of $A$. 
\end{proposition}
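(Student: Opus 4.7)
The plan is to handle each of the three projections separately, exploiting in every case a form of \emph{separability} that reduces the matrix problem to a simpler one. All three cases start from the definition (\ref{eq:projection}) and the expansion $\|A-B\|_F^2 = \|A\|_F^2 - 2\langle A,B\rangle + \|B\|_F^2$, where $\langle\cdot,\cdot\rangle$ is the Frobenius inner product.

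For part (i), $\Theta_{m,n}$ is not column-separable, so I will proceed via the classical orthogonal Procrustes argument. Observe that for $B\in\Theta_{m,n}$ we have $\|B\|_F^2 = \mathrm{trace}(B^TB)=\mathrm{trace}(I_n)=n$, which is constant. Hence minimizing $\|A-B\|_F^2$ over $\Theta_{m,n}$ is equivalent to maximizing $\mathrm{trace}(A^TB)$ subject to $B^TB=I_n$. Plugging in the SVD $A=U\Sigma V^T$ (with $\Sigma\in\mathbb{R}^{m\times n}$) and setting $Z=U^TBV$, one sees $Z$ still has orthonormal columns, so $|Z_{ii}|\le 1$, giving
\begin{equation*}
\mathrm{trace}(A^TB)=\mathrm{trace}(\Sigma^T Z)=\sum_{i=1}^n \sigma_i(A)\,Z_{ii}\le \sum_{i=1}^n \sigma_i(A),
\end{equation*}
with equality attained when $Z=I_n$, i.e., $B=UV^T$. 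This is the main technical step and the place where the argument is most delicate, because one must check that $B=UV^T$ indeed belongs to $\Theta_{m,n}$ (which follows from the orthonormality of the columns of $U$ and $V$) and is well-defined regardless of possible multiplicity of singular values.

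For part (ii), the set $\Omega_{m,n}$ decouples across columns: $B=[b_1,\ldots,b_n]\in\Omega_{m,n}$ if and only if $\|b_j\|_2=1$ for each $j$. Since $\|A-B\|_F^2=\sum_{j=1}^n \|a_j-b_j\|_2^2$, the minimization splits into $n$ independent problems $\min_{\|b_j\|_2=1}\|a_j-b_j\|_2^2$. Each one is the projection of a vector onto the unit sphere, whose solution is $b_j=a_j/\|a_j\|_2$ for $a_j\neq 0$ (and any unit vector if $a_j=0$). Writing this column-wise as a matrix product yields $P_{\Omega_{m,n}}(A)=AS^{-1}$ with $S=\mathrm{diag}(\|a_1\|_2,\ldots,\|a_n\|_2)$.

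For part (iii), the same separability works: $\|B\|_0\le\tau$ column-wise is a per-column constraint, so the problem reduces to $n$ independent projections of $a_j$ onto the set $\{b\in\mathbb{R}^m:\|b\|_0\le\tau\}$. For a single vector, by enumerating possible support sets $I\subset\{1,\ldots,m\}$ of size at most $\tau$, one has $\min_{\mathrm{supp}(b)\subseteq I}\|a_j-b\|_2^2=\sum_{i\notin I}a_{ij}^2$, so the minimizing support is the set of indices corresponding to the $\tau$ largest values of $|a_{ij}|$, and on that support $b$ coincides with $a_j$. This is exactly the hard-thresholding operator. The only subtlety is ties in absolute values, where the projection becomes multi-valued but still well-defined as a selection; this is consistent with the general proximal map of (\ref{eq:prox_map}) which is multi-valued for nonconvex sets.
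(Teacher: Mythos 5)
Your proof is correct and follows essentially the same route as the paper: part (i) is the same orthogonal Procrustes argument (reduce to maximizing $\mathrm{trace}(A^TB)$, insert the SVD, bound the diagonal entries of $Z$ by $1$, attain equality at $B=UV^T$), and part (ii) reduces, exactly as in the paper, to normalizing each column. The only divergence is part (iii), where the paper defers to a citation while you supply the standard (and correct) column-separable support-enumeration argument, including the appropriate caveat about ties making the projection multi-valued.
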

{%\color{red} TOGLIERE? METTERE REF TESI DI DOMI?
\begin{proof}
%Consider (\ref{eq:projection}), if $\Omega$ is the set of $m\times n $ matrices either with orthonormal columns or with unit norm columns, 
For $B \in \Omega\subset {\mathbb R}^{m\times n}$ having unit norm columns, we have that
$\|A-B\|_F^2=\trace(A^T A)-2\trace(B^T A)+n$ and thus solving (\ref{eq:projection}) is equivalent to finding
 $B=\argmax_{\Omega} \trace(A^T B)$.

$i)$ For $\Omega=\Theta_{m,n}$, let $A=U\Sigma V^T$ be the SVD of $A$, and let $\bar{n}=\min \{m,n\}$. Then 
 \begin{equation*}
  \trace(B^T A)=\trace(B^T U\Sigma V^T)= 
\trace(V^T B^T U \Sigma)=\sum_{i=1}^{\bar{n}} z_{ii} \sigma_i \leq \sum_{i=1}^{\bar{n}} \sigma_i,
 \end{equation*}
where  $z_{ii}$ is the $i$th diagonal element of $Z=V^T B^T U$ and $\sigma_i$ are the singular values of $A$. 
In particular, note that $z_{ii}\le 1$ as $e_i^T V^T B^T U e_i\le \|Ve_i\| \|B\| \|Ue_i\|=1$.
The upper bound is reached for $Z$ equal to the identity matrix, which is obtained for $B=UV^T$.
%In particular, we can observe that $\trace(B^T A)=\sum_{i=1}^{\bar{n}} \sigma_i$ when $Z$ is the identity. 
%Thus, the solution of (\ref{eq:projection}) is given by $B=UV^T$.

$ii)$ For $\Omega=\Omega_{m,n}$ and $A=[a_1, \ldots, a_n]$, $B=[b_1, \ldots, b_n]$ with $\|b_j\|=1$, $j=1, \ldots, n$, it
holds that
\begin{equation*}
 \trace(B^TA)=\sum_{j=1}^{n} b_j^T a_j =\sum_{j=1}^{n} \cos(\theta_j) \|a_j\|_2 \leq \sum_{j=1}^n \|a_j\|_2,
\end{equation*}
where $\theta_j\in \left[0,\frac{\pi}{2}\right]$, %Let $S$ be a diagonal matrix such that $s_{ii}=\|a_i\|_2$, 
and the upper bound is reached for $b_j=a_j/\|a_j\|_2$. 

$iii)$ The statement follows from \cite[Section 4]{Bolte2014}.
\end{proof} }

Algorithm~\ref{alg:PALM_tt} describes the general PALM scheme with constant stepsize 
applied to the tensor formulation (\ref{eq:PALM_tt}), that is therefore named \palmttdot.
The Lipschitz moduli $L_X, L_{G_i}$ $i = 1,2,3$ are as introduced in Proposition~\ref{prop:lipschitz_PALMTT}. 
This algorithm will be a reference competitor for our new method in the
reported numerical experiments.
The TT algorithmic version of \spalm can be easily obtained following Algorithms \ref{alg:sPALM} and \ref{alg:BBstep} and replacing the stepsizes $\bar \alpha_1,\bar  \alpha_2,\bar  \alpha_3$ and $\bar \alpha_X$ with the spectral stepsizes based on 
Algorithm~\ref{alg:BBstep} and imposing the Armijo sufficient decrease condition.

\begin{algorithm}[t]
\caption{\palmtt}
\label{alg:PALM_tt}
\begin{algorithmic}[1]
\STATE {\bf{Input}}: 
Data matrix $\mathcal{Y}\in\mathbb{R}^{n_1\times n_2\times n_3\times n_e \times n_p}$, 
initial values for $G_1\in\mathbb{R}^{n_1\times r_1}$, 
$\mathcal{G}_2\in\mathbb{R}^{r_1 \times n_2 \times r_2}$, 
$G_3\in\mathbb{R}^{r_2\times k }$ and $\mathcal{X} \in\mathbb{R}^{k\times n_e\times n_p}$, 
maximum number $\tau$ of non-zero elements of 
each fiber of $\mathcal{X}$,  $\eta_1, \eta_2, \eta_3, \eta_X>1$, {$\mu_1, \mu_2, \mu_3, \mu_X >0$}.
%maximum number $maxit$ of iterations, tolerance $\epsilon$ for the stopping criterion, 
%\STATE Set $w_0=[vec(G_1), \, vec(\mathcal{G}_2^{(1)}), \, vec(G_3)]$
\STATE {Set $G_2\leftarrow\left(\mathcal{G}_2\right)_{[2]}$ and $X\leftarrow\left(\mathcal{X}\right)_{[1]}$}
\STATE Compute $G_1=P_{\Theta_{n_1,r_1}}(G_1)$, $G_2=P_{\Theta_{r_1n_2,r_2}} (G_2)$,  
$G_3=P_{\Omega_{r_2,k}}(G_3)$, $X=P_{\Gamma_{k,n_e n_p}^{(\tau)}} (X)$.
%\STATE Set $z_0=[vec(G_1); vec(G_2); vec(G_3); vec(X)]$
\REPEAT %{\bf{Repeat until convergence }}
%\STATE Set $(\bar G_1, \bar G_2, \bar G_3, \bar X) = (G_1, G_2,  G_3, X)$
\STATE {\emph{Update $G_1$}: Set {$L'_{G_1} = \max \{\eta_1 L_{G_1}, \mu_1\}$}  and 
$\bar \alpha_1=1 / L'_{G_1}$ and compute 
%\begin{equation*}
$G_1=P_{\Theta_{n_1,r_1}} \left(G_1-\bar \alpha_1 \nabla_{G_1} H\right)$;
%\end{equation*}
}

\STATE {\emph{Update $G_2$}: Set  {$L'_{G_2} =  \max \{\eta_2 L_{G_2}, \mu_2\}$}  and $\bar \alpha_2=1 /L'_{G_2}$ and compute 
%\begin{equation*}
$G_2=P_{\Theta_{r_1n_2,r_2}} \left(G_2-\bar \alpha_2 \nabla_{G_2} H\right)$;
%\end{equation*}
}

\STATE {\emph{Update $G_3$}: Set {$L'_{G_3} =  \max \{\eta_3 L_{G_3}, \mu_3\}$}  and $\bar \alpha_3=1 /L'_{G_3}$ and compute 
%\begin{equation*}
$G_3=P_{\Omega_{r_2,k}} \left(G_3-\bar \alpha_3\nabla_{G_3} H\right)$;
%\end{equation*}
}

\STATE {\emph{Update $X$}: Set {$L'_{X} =  \max \{\eta_X L_{X},{ \mu_X}\}$}  and $\bar \alpha_X=1 / L'_{X}$ and compute 
%\begin{equation*}
$X=P_{\Gamma_{k,n_e n_p}^{(\tau)}} \left(X-\bar \alpha_X\nabla_{X} H\right)$;
%\end{equation*}
}
%\STATE Set $z_{it}=[vec(G_1); vec(G_2); vec(G_3); vec(X)]$
%\STATE {\bf{if}} $\|\bar G_1 - G_1\|_F+\|\bar G_2 - G_2\|_F+\|\bar G_3 - G_3\|_F+\|\bar X - X\|_F \leq \epsilon$ 
%\RETURN $\mathcal{G}_1, \, \mathcal{G}_2, \, \mathcal{G}_3 \,\mathcal{X}$ CALLIGRAFICO O NO??
%\ENDFOR
\UNTIL convergence

\end{algorithmic}
\end{algorithm}

{%
\begin{remark}
The same considerations of Proposition \ref{rem:alfaL}
carry over to the BB stepsizes and the Lipschitz constants for the TT-DL problem  (\ref{eq:PALM_tt}). 
% Ok, ho controllato e lo abbiamo verificato anche sperimentalmente. Si puo' per caso dire che $A_i Ai^T$ e' def pos?
\end{remark}
}

The next theorem contains our main convergence result  for \palmtt described  in Algorithm \ref{alg:PALM_tt}. 
%The main convergence result is given in the following theorem. We remark that it is stated 
%for \palmtt described  in Algorithm \ref{alg:PALM_tt}, 
This result can be generalized to 
{\it any} PALM-type algorithm applied to the TT \DL\ formulation  (\ref{eq:PALM_tt}).

\begin{theorem} \label{thm:convergence_general}
 If the sequence generated by \palmtt in Algorithm \ref{alg:PALM_tt} is bounded, then it converges to a critical point of
the TT formulation of the \DL\ problem (\ref{eq:PALM_tt}) and it has finite length property.
\end{theorem}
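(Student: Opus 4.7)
The plan is to recognize PALM-DL-TT as an instance of the multi-block PALM scheme of (\ref{multiterm}) with $p=4$ variable blocks $G_1,\mathcal{G}_2,G_3,\mathcal{X}$, and to invoke the multi-block extension of Theorem \ref{thm:convergence_PALM} announced in the Remark following Theorem \ref{thm:convergence_PALM_LS}. Thus the proof reduces to checking that problem (\ref{eq:PALM_tt}) satisfies the multi-block analogue of Assumption A and that its objective $\Psi$ is semi-algebraic; boundedness of the iterates is a standing hypothesis of the theorem.

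\textbf{Verifying Assumption A.} The coupling function $H$ is a real polynomial in the entries of its arguments, hence $C^\infty$, and $\nabla H$ is Lipschitz continuous on bounded subsets, which gives the multi-block versions of A2 and A3. The four non-smooth terms are indicator functions of non-empty closed sets, so each is proper, lower semicontinuous, takes values in $\{0,+\infty\}$, and bounded below by $0$; together with $H\ge 0$ this gives A1 and $\inf\Psi\ge 0>-\infty$. Proposition \ref{prop:lipschitz_PALMTT} provides explicit globally Lipschitz partial gradients, establishing A4. Finally, A5 follows from the boundedness hypothesis on $\{z_k\}$: each partial modulus in Proposition \ref{prop:lipschitz_PALMTT} is a continuous function of the other blocks through squared spectral norms of $G_3$, $G_3X$, $X$, and $\sum_i A_i A_i^T$, hence uniformly bounded along a bounded sequence.

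\textbf{Verifying semi-algebraicity.} $H$ is polynomial, thus semi-algebraic. Each constraint set is semi-algebraic as well: $\Theta_{m,n}$ is the Stiefel variety cut out by the polynomial system $G^TG-I=0$; $\Omega_{m,n}$ is defined by the polynomial equalities $\|d_i\|_2^2-1=0$; and $\Gamma_{k,p}^{(\tau)}$ is the finite union over supports of size $\le\tau$ of coordinate subspaces. Indicator functions of semi-algebraic sets are semi-algebraic, and finite sums preserve semi-algebraicity, so $\Psi$ is semi-algebraic. With the multi-block PALM convergence theorem in hand, one concludes both that $\sum_k \|z_{k+1}-z_k\| <\infty$ and that $z_k$ converges to a critical point of (\ref{eq:PALM_tt}).

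\textbf{Main obstacle.} The bulk of the verification is routine bookkeeping; the genuinely delicate point is the extension of the three-pillar scheme of \cite{Bolte2014} (sufficient decrease, relative-error subgradient bound, and the Kurdyka--\L ojasiewicz-based finite-length argument, cf.\ items C1--C3 of Lemma \ref{lem:C2}) from two blocks to four. Each pillar carries over with purely notational changes: one sums the four per-block decrease inequalities over a sweep to obtain a constant $\gamma_1>0$ with $\gamma_1\|z_{k+1}-z_k\|^2\le \Psi(z_k)-\Psi(z_{k+1})$, and one assembles the optimality conditions of the four proximal subproblems together with the Lipschitz bounds on $\nabla H$ (supplied by Proposition \ref{prop:lipschitz_PALMTT} on the bounded iterate set) to exhibit $w_k\in\partial\Psi(z_k)$ with $\|w_k\|\le\gamma_2\|z_k-z_{k-1}\|$. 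The care required is in keeping the partial-gradient cross-terms between blocks under control so that the cumulative constants $\gamma_1,\gamma_2$ depend only on the uniform bounds inherited from boundedness of $\{z_k\}$, at which point the abstract machinery of \cite{Bolte2014} delivers the conclusion.
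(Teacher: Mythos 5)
Your proposal is correct and follows essentially the same route as the paper: verify that the objective of (\ref{eq:PALM_tt}) is semi-algebraic and satisfies Assumption~A (with Proposition~\ref{prop:lipschitz_PALMTT} supplying A3--A5), then invoke the multi-block extension of the PALM convergence theorem. Your write-up is in fact more detailed than the paper's one-paragraph proof, in particular in spelling out the semi-algebraicity of the constraint sets and in flagging the (routine but real) two-to-four block extension of conditions C1--C3, which the paper delegates to the remark on problem (\ref{multiterm}).
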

\begin{proof}
The result is proved by observing that {the function $H$ in 
(\ref{eq:PALM_tt}) is twice continuously differentiable}, 
the functions in the objective of problem (\ref{eq:PALM_tt}) are semi-algebraic and satisfy Assumption~A.
Indeed, all indicator functions are proper, lower semi-continuous and semi-algebraic: the sets $\Omega_{n,m}$ and  $\Gamma_{k,n_en_p}$ are semi-algebraic (see \cite{Bao2014,palmDL}) and 
$\Theta_{n,m}$ is a closed semi-algebraic set for any $n$ and $m$. Also, 
Proposition~\ref{prop:lipschitz_PALMTT} ensures that Assumptions~A3-A5 hold (see \cite[Remark 3]{Bolte2014}).
 \end{proof}

%An analogous result can be proved for the tensor formulation of the
%classification problem  (\ref{eq:PALM_classification_tt}) by generalizing Proposition \ref{prop:lipschitz_PALMTT} to the function $H_C$ in (\ref{eq:PALM_classification_tt}) as shown below. 
%All the other considerations still hold.
%%
%{%\color{blue}
%\begin{proposition} \label{prop:lipschitz_PALMTT_class}
%Consider the function $H_C$ defined below (\ref{eq:PALM_classification_tt}) and let $G_2=\left(\mathcal{G}_2\right)_{[2]}$ and   $X=\mathcal{X}_{[1]}$. 
%Then the partial gradients of $H_C$ are globally Lipschitz and, for $G_1, G_2$ having orthonormal columns,
% the Lipschitz constants $L_W$ and $L_X$, satisfy: 
%$$
% L_W = 2\, \gamma\, \|X\|_2^2, \quad L_X = 2\,\|G_3\|_2^2+ 2\, \gamma\, \|W\|_2^2,
%$$
%while  $L_{G_1}, L_{G_2}$ and $L_{G_3}$ are given in Proposition~\ref{prop:lipschitz_PALMTT}.
%\end{proposition}
%}

%%%%%%%%%%%%%%%%%%%%%%%%%%%%%%%%%%%%%%%%%%%%%%%%%%%%%
%\input{numerical_experiments_v6.tex}
\section{Numerical experiments} \label{sec:numerical_expe}
{We numerically explore the advantages of using the spectral variant of PALM  and 
the possible benefits  of using a TT based algorithm in the solution of the image classification problem.
{We report experiments using the first classification strategy described in Section \ref{sec:classification}, that is the DL problem is first solved and then the classification matrix $W$ is determined by solving (\ref{eq:W}) 
%\footnote{{\color{magenta} The interested reader can find the numerical results obtained using the second classification strategy in \cite[Chapter 7]{Domi_phd_thesis}}.}.}
%We first describe the considered databases and give details on the algorithmic implementation.
The considered algorithms are compared in terms of efficiency and classification performance 
both in the matrix and tensor settings. A truncated approach for the TT formulation is also
tested. Section~\ref{sec:4D} is devoted to the treatment of 5th order tensors, leading
to a 4D implementation of our algorithms; numerical experiments on a suitable database are reported, illustrating
the effectiveness of the TT approach.}

\subsection{Description of the databases}
We consider four different databases composed of grayscale images with $n=n_1n_2$ pixels
of $n_p$ persons or objects in $n_e$ expressions, where by expression in most cases
we mean different illuminations, view 
angles, etc. 
Each database can thus be naturally represented as a $4$th order tensor 
$\mathcal{Y}\in\mathbb{R}^{n_1\times n_2 \times n_e \times n_p}$. 
The characteristics of all databases are summarized in Table \ref{tab:car_data}.
%To apply the dictionary learning formulation the number of pixels needs to be smaller than the total number of images, that is,
% $n_i<n_en_p$. For this reason we consider a shrunk version of the 
%MIT-CBCL and Extended Yale databases, where the total number of pixels is drastically reduced.
\begin{enumerate}
 \item MIT-CBCL\footnote{Copyright 2003 -2005 Massachusetts Institute of Technology. All Rights Reserved.} 
{\cite{mit}} is composed by 3240 grayscale images of 10 persons in 324 different expressions. Each image is  
reduced\footnote{To apply the dictionary learning formulation the number of pixels needs to be smaller than the total number of images, that is, 
 $n<n_en_p$. For this reason a shrunk version of the 
MIT-CBCL and Extended Yale databases are considered, where the total number of pixels 
is drastically reduced.} to $15\times15$ pixels. 
\item Extended Yale {\cite{extended}}  consists of more than 16,000 images of 28 subjects in 585 expressions. 
Each image is reduced to $20\times 15$ pixels.
\item MNIST {\cite{mnist}} contains $28\times 28$ size images of ten handwritten digits (from 0 to 9) split in a training set 
composed by 60,000 images and a test set composed by 10,000 images. 
The number of ``expressions'' for each digit varies. In Table~\ref{tab:car_data} the minimum and maximum 
number of expressions is reported. For our experiments we use 2676 expressions for the training 
set and 892 for the test set. 
Notice that the variable expression is not well-defined, i.e., there is no correspondence with two different 
digits in the $j$th expression.
 \item Fashion-MNIST {\cite{fashion}}  contains 70,000 images of 10 different kinds of Zalando’s articles;
%The Dataset is split in a training set of 6,000 images per item, 
%and a test set of 1,000 images per article. 
3,000 images per item for the training set and 1,000 for the test set were used. 
As for MNIST, the variable ``expression'' is not well-defined.
\end{enumerate}
Each database of $n_p$ persons in $n_e$ expressions is split into 75\% training and 25\% test sets, 
%$75\%$ of each person's expressions is used as training set, while the remaining expressions of the same person
%are used as test set, 
so that
%. We store the training set in a $4-th$ order tensor $\mathcal{Y}\in\mathbb{R}^{n_1\times n_2\times \tilde{n_e}\times n_p}$, where $n_i=n_1n_2$ is the number of pixels and 
$\tilde{n}_e=0.75n_e$ is the total number of expressions used for training. 
%{ VAL: later on $\tilde{n}_e$ seems to disappear!}
%For all the algorithms we have to define intial values for the dictionary (or its TT-cores) and the sparse matrix. In particular 

 \begin{table}[hbt]
{\footnotesize
\centering
\begin{tabular}{ccccc }%{c|c|c|c|c }
%\hline
\toprule
 Database &  pixel size (original) & pixel size (resized) & $n_e$ & $n_p$  \\
\midrule% \hline
 MIT-CBCL &$200\times 200$ & $15\times 15$ & $324$ & $10$ \\ %\hline
Ext'd Yale shrunk &$640\times 480$ & $20\times 15$  & $585$ & $28$ \\ %\hline
MNIST &$28\times 28$ &$28\times 28$ & $[892, 6742]$ & $10$ \\ %\hline
Fashion MNIST &$28\times 28$ &$28\times 28$  & $7000$ & $10$ \\% \hline
\bottomrule
\end{tabular}
\caption{Pixel size, number of expressions and persons of all databases.}
\label{tab:car_data}
}
\end{table} 

%%%%%%%%%%%%%%%%%%%%%%%%%%%
\subsection{Experimental setting}
We consider different PALM-type algorithms for
%We consider the PALM-type algorithms described in Table~\ref{codes} for 
the solution of the matrix and tensor \DL\ problems in (\ref{eq:dl_matrix}) and (\ref{eq:dl_tt}), respectively. %, implemented in Matlab.
All the numerical experiments were conducted on one
%We use the HPC cluster of the Department of Mathematics composed by one 
node HPE ProLiant DL560 Gen10 with 4 Intel(R) Xeon(R) Gold 6140 CPU @ 2.30GHz and 100 of the 512 Gb of RAM using Matlab  R2019a. We consider the {\em classification rate}  as performance  measure, that is, the percentage of correctly classified 
persons or objects over the total number of test images.

\begin{table}
{\footnotesize
\begin{center}
{%\color{magenta}
\begin{tabular}{ll}
\toprule % \hline
       & \multicolumn{1}{c }{matrix \DL\ problem (\ref{eq:dl_matrix})} \\
        \midrule
 PALM-DL& PALM  using computation of the exact Lipschitz constant  \cite{Bolte2014}  \\
 iPALM-DL& iPALM using computation of the exact Lipschitz constant \cite{iPALM}\\
 iPALMbt-DL& iPALM  using backtracking to estimate the Lipschitz constant \cite{iPALM}\\
\spalmdot-DL& \spalm based on Algorithms ~\ref{alg:BBstep}-\ref{alg:sPALM}  \\
\bottomrule \\

\toprule % \hline
       & \multicolumn{1}{c }{TT-\DL\ problem (\ref{eq:dl_tt})} \\
        
\midrule %\hline
\palmtt& PALM using computation of the exact Lipschitz constant as in Algorithm \ref{alg:PALM_tt}\\
\spalmtt& \spalm based on Algorithms ~\ref{alg:BBstep}-\ref{alg:sPALM} \\
% \midrule%\hline
% PALM-CDL & PALM for the matrix classification \DL\ problem (\ref{eq:classification_DWX})\\
% sPALM-CDL & \spalm for the matrix classification \DL\ problem (\ref{eq:classification_DWX})\\
% \midrule %\hline
% PALM-CDL-TT& PALM  for the  classification TT-\DL\ problem (\ref{eq:classification_TT}) \\
% sPALM-CDL-TT & \spalm for the classification TT-\DL\ problem (\ref{eq:classification_TT})\\
\bottomrule%\hline
\end{tabular}
}
\caption{All methods employed in our experiments.\label{codes}}
\end{center}
}
\end{table}
%\vskip 0.1in
As starting approximations for the tested algorithms, a random dictionary $\mathcal{D}\in\mathbb{R}^{n_1\times n_2\times k}$ with unit norm frontal slices and a  sparse random tensor $\mathcal{X}\in\mathbb{R}^{k\times n_e\times n_p}$ with at most $\tau$ non-zero elements  per column fiber were used. 
Then, for the matrix formulation we set $Y=\mathcal{Y}_{[2]}$,
 $D=\mathcal{D}_{[2]}$ and $X=\mathcal{X}_{[1]}$. A similar choice is made also for the Tensor-Train setting where $G_1$, $\mathcal{G}_2$ and $G_3$ are initialized as the TT-cores of $\mathcal{D}$.
%The initial guess for the classification matrix $W$, when needed, is computed by solving (\ref{eq:W}) with $\beta=0$. 
The parameter $k$ is set such that  {$n<k<\tilde{n}_e n_p$} as is common in dictionary learning.  More precisely, we set $k=441$ for the face databases, $k=1225$ for the Fashion  
MNIST and $k=1600$ for the MNIST.  The sparsity parameter $\tau$ depends on the number of classes, $n_p$, of the database and is set to $4n_p$ for all the databases except MNIST for which $\tau=2n_p$. Other choices of these parameters have been explored. However, we just report the results for these values of $k$ and $\tau$ which seem to exhibit higher classification rate. 
%Several values for the $\beta$ and $\gamma$ in problems 
%(\ref{eq:W})-(\ref{eq:classification_TT}) have been tested and then fixed to the values 0 and 1, respectively.

{%\color{magenta}
We consider the PALM-type algorithms described in Table~\ref{codes}.
The iPALM algorithm is a variant of PALM, where inertial steps are
computed to accelerate convergence \cite{iPALM}.  More precisely, given some positive scalars $\xi_k^1$ and  $\xi_k^2$, 
 the iterate updates in (\ref{stepx}) and (\ref{stepy}) in Algorithm \ref{alg:PALM} are modified as
\begin{eqnarray*}
 \tilde x_k & = & x_k + \xi^1_k (x_k-x_{k-1}), \\
  x_{k+1} & = & \textup{prox}^{f_1}_{1/\bar \alpha_{k,1}}\left (\tilde x_k - \bar \alpha_{k,1} \nabla_{x} H(\tilde x_k,y_k) \right)  ,
\end{eqnarray*}
and analogously for the update of the $y$ block  using $\xi_k^2$. 
The choice of $\xi_k^1$ and  $\xi_k^2$ is crucial both for the convergence and the
acceleration speed of iPALM; setting $\xi_k^1=\xi_k^2=0$ corresponds to the original PALM. Following  the analysis in \cite{iPALM}  for the nonconvex case, also the value of $\eta_1$ and $\eta_2$ should suitably
depend on $\xi_k^1$ and  $\xi_k^2$, but we found that in practice the choice  $\eta_1=\eta_2=1$ yields
much better performance. These values have also been adopted for PALM-DL and PALM-DL-TT based on Algorithm \ref{alg:PALM} while the $\mu$'s are set equal to $10^{-10}$.
Different values for the $\xi_k$'s  have been tested based on the experience in \cite{iPALM} and we report here results for the best performing ones, that is $\xi_k^1=\xi_k^2=0.2$. 
The iPALMbt-DL implementation\footnote{The implementation of iPALMbt-DL  
is an adaptation to the DL problem of the code provided by the courtesy of the authors of \cite{iPALM} for the solution of sparse nonnegative factorizations.} is a variant of iPALM where the Lipschitz constants are approximated using the backtracking strategy (doubling the attempted values) and decreasing the approximated value if a step yielded sufficient decrease
(see also \cite{iPALM-algo}).}

Regarding  implementations based on the spectral variant in  Algorithms~\ref{alg:BBstep}-\ref{alg:sPALM} 
we set: $\alpha_{\min}= 10^{-10}$, $\alpha_{\max}= 10^{10}$,
the initial stepsizes $\alpha_0$'s are set equal to 1, $\rho$'s are set equal to $0.5$ and $\delta$'s equal to $10^{-4}$.

%{\color{blue} Finally, for the \spalmtt and sPALM-CDL-TT algorithms, the computation of the partial gradients avoids the explicit calculation $I\otimes G_1$ as explained in the following remark.
%\begin{remark}
% Using the orthogonality of $I\otimes G_1$ and the properties of the Kronecker product, the partial gradient $\nabla_X H$ in Proposition \ref{prop:lipschitz_PALMTT} can be computed as 
% \begin{equation}
%  \nabla_X H = G_3^TG_2^T (-\tilde{G}^T  + G_2 G_3 X),
% \end{equation}
%where $\tilde{G}=\mathtt{reshape}(G_1' \mathtt{reshape}(Y,[n_1,n_2*p]),[r_1*n_2,p])$.
%\end{remark}
%Notice that the Remark can be applied for the computation of all the partial gradients in Proposition \ref{prop:lipschitz_PALMTT}.
%}

%
%\vskip 0.1in

% VERSIONE SU SIMAX
% % Deriving a reliable criterion for terminating the iteration is a crucial step towards
% % the development of a robust method. Most DL implementations in the literature seem to
% % merely rely on the number of iterations as stopping criterion. 
% % In addition to a safeguard
% % strategy on a maximum (loose) number of iterations, %unless explicitly stated otherwise, 
% % in our implementation the following stopping criterion based on  the TT iterates variation {\color{magenta} and on possible different block variable scaling was used, that is
% % $
% % \|\bar G_1 - G_1\|_F \le {\tt tol_1}, \ \|\bar G_2 - G_2\|_F\le {\tt tol_2}, \ \|\bar G_3 - G_3\|_F\le {\tt tol_3},\ \|\bar X - X\|_F \leq {\tt tol_X} ,
% % $}
% % where the upper bar denotes the approximate solution from the previous iteration.

{%\color{magenta} 
Deriving a reliable criterion for terminating the iteration is a crucial step towards
the development of a robust method. Most DL implementations in the literature rely on the number of iterations as stopping criterion. 
We have also implemented this choice by setting the maximum number 
of iterations equal to 50 in the forthcoming experiments, where the focus is on discussing the ability of the algorithms in classifying images. Nonetheless,  we have further investigated the use of a problem-driven
 stopping criterion in the analysis of the convergence history of the PALM-type algorithms and report the obtained results in  Appendix \ref{app:test}.}

%%%%%%%%%%%%%%%%%%%%%%%
\subsection{Preliminary tests on the matrix \DL\ {classification problem}}
In this section we want to explore the potential of the 
spectral gradient step compared to that based on the Lipschitz constants in the solution of  the matrix \DL\ classification problem. 
 To this end we compare \spalmdot-DL with the original PALM-DL 
and the inertial variants iPALM-DL and  \bipalmdot-DL on the four datasets described in Table~\ref{tab:car_data}.

% VERSIONE SU SIMAX
%and the tolerances ${\tt tol_D}$ and ${\tt tol_X}$ in the stopping criterion
%$
%\|\bar D -D\|_F\leq {\tt tol_D},\ \|\bar X - X\|_F  \leq {\tt tol_X}
%$
%equal to ${\tt tol_D}=10^{-3}\sqrt{n_1n_2k}$ and ${\tt tol_X}=10^{-3}\sqrt{kn_en_p}$,
%where once again, the upper bar denotes the approximate solution from the previous %iteration.

{%\color{magenta}
For the considered methods and all datasets, we report in Table \ref{tab:classbipalm2}  the classification performance after 50 iterations and we plot in Figure \ref{fig:res_bipalm_maxit} the value of $H(D,X)=\|Y-DX\|_F$ as the 
CPU time proceed.

Focusing on the existing PALM variants, we observe that PALM-DL, iPALM-DL
and \bipalmdot-DL reached similar classification performance 
but \bipalmdot-DL is more time consuming. The only exception
is in the classification of the MNIST data set for which \bipalmdot-DL gains roughly the 10\% of classification rate but still at the cost of a higher CPU time (see Figure \ref{fig:res_bipalm_maxit}).
On the other hand sPALM obtains similar percentages for  MIT-CBCL
and Extended Yale  but higher classification performance for the largest data sets, i.e. MNIST and Fashion MNIST, and being in fact the fastest PALM variant.
}

 \begin{table}[hbt]
\centering{%\color{magenta}
\begin{tabular}{lcccc}%{|c|c|c|c|c|c|c|}
\toprule%\hline
        &  \multicolumn{1}{c }{PALM-DL}  &  \multicolumn{1}{c }{iPALM-DL}  & \multicolumn{1}{c}{\bipalmdot-DL}  & \multicolumn{1}{c}{\spalmdot-DL}   \\ \midrule%\hline
% Database &  PALM $\%$ & PALM it & \bipalm $\%$ & \bipalm it & \spalm $\%$ & \spalm it  \\ \hline
 MIT-CBCL & $100\%$ &   $100\%$  &$100\%$  & $100\%$ \\ %\hline
Ext'd Yale shrunk & $93.7\%$ & $93.8\%$   & $94.2\%$  & $93.4\%$ \\% \hline
MNIST & $64.6\%$ & $65.3\%$  & $76.6\%$ & $80.4\%$ \\ %\hline
Fashion MNIST &$70.0\%$ &  $70.7\%$ &  $71.6\%$ & $74.3\%$ \\ %\hline
\bottomrule
\end{tabular}}
\caption{Successful classification rates of  PALM-DL, iPALM-DL, \bipalmdot-DL, \spalmdot-DL on four different databases.}
\label{tab:classbipalm2}
\end{table}
%
%
%%%%%%%%%%%%%%%%%%%%%%%
\begin{figure}[htb]
\centering
\begin{tikzpicture}
%\begin{semilogyaxis} %per scala logaritmica
		\begin{axis}[width=.5* \textwidth, 
		   title = {\small MIT-CBCL}, 
		   legend columns=4,
           legend entries={ \small PALM-DL,  \small iPALM-DL, \small \bipalmdot-DL, \small \spalmdot-DL },
           legend to name=named9,
           %y tick label style={/pgf/number format/sci},
		   xlabel = {\small  CPU time (secs)}, 
		   ylabel = {\scriptsize $\|Y-DX\|_F$},
		   height = .2 \textheight,
		   xmin = -.1,
           %xmax = 4
		 ]
		  \addplot+[mark=none,OrangeRed,line width=1.5,loosely dotted,mark options={ fill=OrangeRed}] table[x index = 1, y index = 2] {data_rev/MIT_DX_time_res_maxit50_tutti};
        \addplot+ [mark=none,PineGreen,line width=1.5,dashed,mark options={ fill=PineGreen}]table[x index = 3, y index = 4] {data_rev/MIT_DX_time_res_maxit50_tutti};
        \addplot+ [mark=none,Blue,line width=1.5,dashdotted,mark options={ fill=Blue}]table[x index = 5, y index = 6] {data_rev/MIT_DX_time_res_maxit50_tutti};
		   \addplot+ [mark=none,Peach,line width=1.8,mark options={ fill=Peach}]table[x index = 7, y index = 8] {data_rev/MIT_DX_time_res_maxit50_tutti};	    
		\end{axis}
\end{tikzpicture}	
\begin{tikzpicture}
%\begin{semilogyaxis} %per scala logaritmica
		\begin{axis}[width=.5* \textwidth, 
		   title = {\small Extended Yale}, 
		    xlabel = {\small  CPU time (secs)},  
y tick label style={/pgf/number format/sci},
		   height = .2 \textheight,
		   		   xmin = -.1,
           %xmax = 50
		 ]
		 \addplot+[mark=none,OrangeRed,line width=1.5,loosely dotted,mark options={ fill=OrangeRed}] table[x index = 1, y index = 2] {data_rev/YALE_DX_time_res_maxit50_tutti};
        \addplot+ [mark=none,PineGreen,line width=1.5,dashed,mark options={ fill=PineGreen}]table[x index = 3, y index = 4] {data_rev/YALE_DX_time_res_maxit50_tutti};
        \addplot+ [mark=none,Blue,line width=1.5,dashdotted,mark options={ fill=Blue}]table[x index = 5, y index = 6] {data_rev/YALE_DX_time_res_maxit50_tutti};
		   \addplot+ [mark=none,Peach,line width=1.8,mark options={ fill=Peach}]table[x index = 7, y index = 8] {data_rev/YALE_DX_time_res_maxit50_tutti};
		\end{axis}
\end{tikzpicture}
\begin{tikzpicture}
%\begin{semilogyaxis} %per scala logaritmica
		\begin{axis}[width=.5* \textwidth, 
		   title = {\small MNIST}, 
		   xlabel = {\small  CPU time (secs)},  
y tick label style={/pgf/number format/sci},
		   height = .2 \textheight,
		   		   xmin = -.1,
           %xmax = 50
		 ]
		 \addplot+[mark=none,OrangeRed,line width=1.5,loosely dotted,mark options={ fill=OrangeRed}] table[x index = 1, y index = 2] {data_rev/MNIST_DX_time_res_maxit50_tutti};
        \addplot+ [mark=none,PineGreen,line width=1.5,dashed,mark options={ fill=PineGreen}]table[x index = 3, y index = 4] {data_rev/MNIST_DX_time_res_maxit50_tutti};
        \addplot+ [mark=none,Blue,line width=1.5,dashdotted,mark options={ fill=Blue}]table[x index = 5, y index = 6] {data_rev/MNIST_DX_time_res_maxit50_tutti};
		   \addplot+ [mark=none,Peach,line width=1.8,mark options={ fill=Peach}]table[x index = 7, y index = 8] {data_rev/MNIST_DX_time_res_maxit50_tutti};
		\end{axis}
\end{tikzpicture}%	
\begin{tikzpicture}
%\begin{semilogyaxis} %per scala logaritmica
		\begin{axis}[width=. 5* \textwidth, 
		   title = {\small Fashion MNIST}, 
		   xlabel = {\small  CPU time (secs)}, 
y tick label style={/pgf/number format/sci},
		   height = .2 \textheight,
		   		   xmin = -.1,
          % xmax = 50
		 ]
\addplot+[mark=none,OrangeRed,line width=1.5,loosely dotted,mark options={ fill=OrangeRed}] table[x index = 1, y index = 2] {data_rev/FASHION_DX_time_res_maxit50_tutti};
        \addplot+ [mark=none,PineGreen,line width=1.5,dashed,mark options={ fill=PineGreen}]table[x index = 3, y index = 4] {data_rev/FASHION_DX_time_res_maxit50_tutti};
        \addplot+ [mark=none,Blue,line width=1.5,dashdotted,mark options={ fill=Blue}]table[x index = 5, y index = 6] {data_rev/FASHION_DX_time_res_maxit50_tutti};
		   \addplot+ [mark=none,Peach,line width=1.8,mark options={ fill=Peach}]table[x index = 7, y index = 8] {data_rev/FASHION_DX_time_res_maxit50_tutti};

		\end{axis}
\end{tikzpicture}
\\
\ref{named9}
\caption{Residual norm history for PALM-DL, iPALM-DL, \bipalmdot-DL, \spalmdot-DL
(50 iterations). \label{fig:res_bipalm_maxit}}

\end{figure}
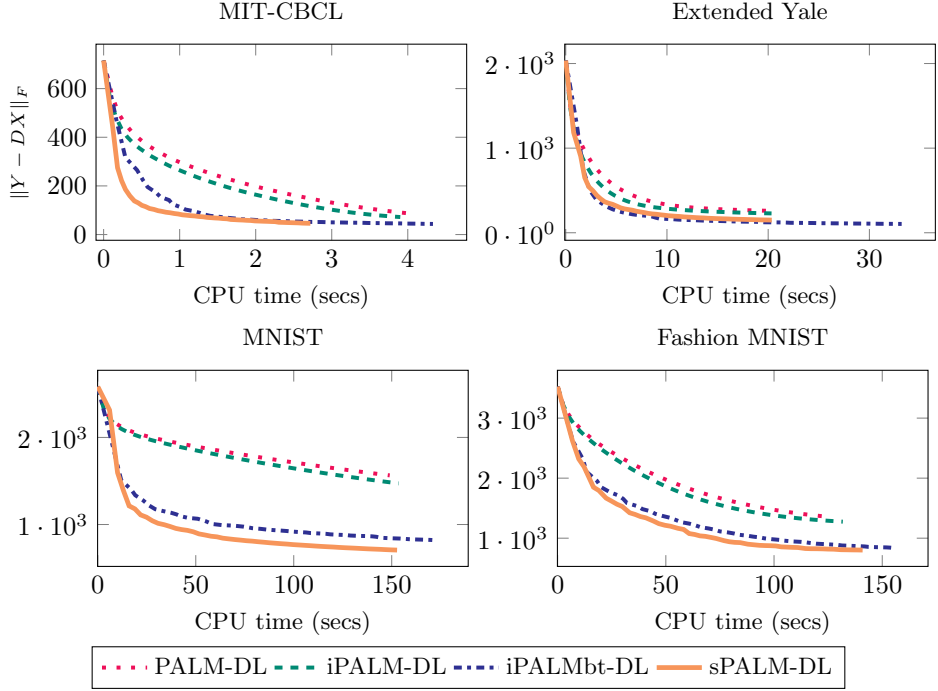

%%%%%%%%%%%%%%%%%%%%%%

%{\color{red} questo qui sotto dove lo mettiamo? qui o nella parte in appendice? Forse starebbe meglio in appendice perché non volgiamo tirare in ballo la classificazione}\\
{%\color{blue}
For the sake of completeness we also report in Table~\ref{tab:comparison_ksvd} a comparison between the
classical K-SVD method  {\cite{Aharon2006}}\footnote{We used the Matlab implementation KSVD-Box v13 of K-SVD available at  http://www.cs.technion.ac.il/~ronrubin/software.html}} and the sPALM-DL algorithm in terms of CPU time needed to obtain a comparable 
value of the objective function. 
For this experiment,  sPALM was run until the objective function value
was smaller than that of the objective function obtained with two iterations of K-SVD. Notice that for all the databases, except MNIST, sPALM-DL takes less CPU time than K-SVD, although K-SVD} {uses optimized MEX functions written in C.
Focusing on MNIST, we observe that sPALM takes 84.7 seconds to reach the value of the residual obtained by K-SVD within the 5\%.}
% MNIST: SPALM 99.5 secondi per RES+1% e 84.7 secondi per RES+5%

\begin{table}[hbt]
\centering{%\color{blue}
\begin{tabular}{lrrrr}%{|c|c|c|c|c|c|c|}
\toprule%\hline
          &  \multicolumn{2}{c}{sPALM-DL}  & \multicolumn{2}{c}{K-SVD}   \\ \midrule%\hline
&  \small{$\|Y-DX\|_F$} & \small{CPU time} &  \small{$\|Y-DX\|_F$} & \small{CPU time}   \\ % \hline
\midrule
 MIT-CBCL & $61.9$ & $1.7$ & $62.7$ & $8.6$ \\ %\hline
Ext'd Yale shrunk & $109.7$ & $77.7$ & $109.8$ & $148.3$  \\% \hline
MNIST & $761.1$ & $105.4$ & $763.1$ & $91.3$ \\ %\hline
Fashion MNIST & $827.7$ & $114.4$ & $829.5$ & $140.9$ \\ %\hline
\bottomrule
\end{tabular}
\caption{Value of the objective function and CPU time  memory requirements for  sPALM-DL and K-SVD.}
\label{tab:comparison_ksvd}}
\end{table}

 % VERSIONE SU SIMAX
% Overall, the advantages of using a spectral step in \spalmdot-DL is a faster decrease of the objective function 
%  than with PALM-DL and \bipalmdot-DL (see Figure~\ref{fig:res_bipalm}),  obtained in less
% CPU time, with comparable classification rates.

%%%%%%%%%%%%%%%%%%%%%%%%%%%%%%%%%%%
\subsection{Matrix vs tensor  DL classification problem}
Given the training set $\mathcal{Y}\in\mathbb{R}^{n_1\times n_2\times \tilde{n}_e\times n_p}$, we 
{now solve the DL classification problem  using either a matrix or a tensor formulation.
%s using the two strategies described in Section \ref{sec:classification}.
The matrix DL problem (\ref{eq:dl_ind}) is solved using 
PALM-DL and \spalmdot-DL. The tensor problem (\ref{eq:PALM_tt})
is solved by PALM-DL-TT and {\spalmttdot}. In all cases, the classification matrix $W$ is then computed by solving (\ref{eq:W}). 

Figure~\ref{fig:class_noW} displays the classification success rate of all algorithms as the iterations proceed.
The use of a spectral step results in higher 
classification performance for all examined data. For the MIT-CBCL, \spalm based algorithms achieve 
the maximum classification rate after 20 iterations while PALM-DL and PALM-DL-TT need more iterations
to reach the same rate. When processing MNIST the classification performance of PALM-DL and PALM-DL-TT decreases 
as iterations progress, suggesting overfitting, whereas a slight improvement
occurs with \spalmdot-DL and \spalmttdot.
On these datasets, the Tensor-Train formulation does not seem to be beneficial for classification purposes. 
Computer memory limitations however may favour the tensor approach, as we will discuss in
the next section.

%%%%%%%%%%%%%%%%%%%%%%%%%%%%%%%%%%%%
%\pgfplotsset{footnotesize,samples=10}
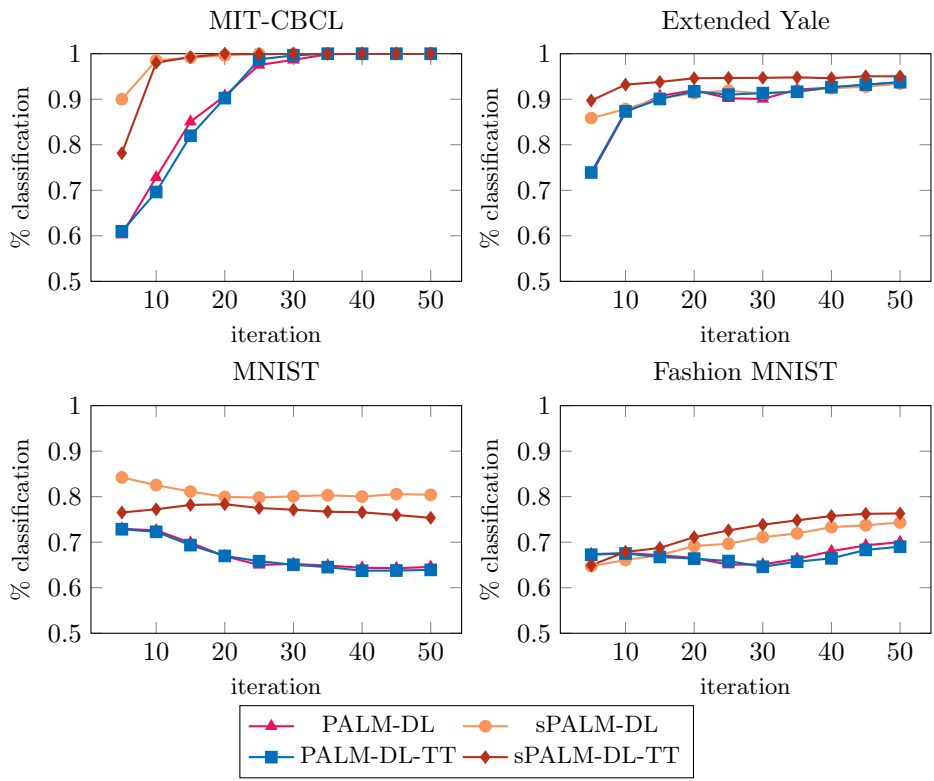
\begin{figure} [!ht]
\centering
\begin{tikzpicture}
		\begin{axis}[width=. 5* \textwidth, 
		   title = {MIT-CBCL}, 
		   legend columns=2,
           legend entries={ \small PALM-DL, \small \spalmdot-DL, \small PALM-DL-TT,  \small \spalmtt },
           legend to name=named2,
		   legend cell align=center,	
		   xlabel = {\small iteration}, 
		   ylabel = {\small $\%$ classification},
		   % ylabel + [Y LABEL],
		   height = .22 \textheight,
           ytick = {0.5,0.6,...,1},
           ymax = 1,
           ymin = 0.5
		 ]
		  \addplot+[mark=triangle*,OrangeRed,thick,mark options={ fill=OrangeRed}] table[x index = 0, y index = 1] {data_2306/data_classification_MIT_noW};
		   \addplot+ [mark=*,Peach,thick,mark options={ fill=Peach}]table[x index = 0, y index = 2] {data_2306/data_classification_MIT_noW};
		    \addplot+ [mark=square*,NavyBlue,thick,mark options={ fill=NavyBlue}]table[x index = 0, y index = 3] {data_2306/data_classification_MIT_noW};
		    \addplot+ [mark=diamond*, 	BrickRed,thick,mark options={ fill= 	BrickRed}]table[x index = 0, y index = 4] {data_2306/data_classification_MIT_noW};
		\end{axis}
\end{tikzpicture}%	\caption{BLA BLA.} \label{fig:cg}
~\begin{tikzpicture}
		\begin{axis}[width=. 5* \textwidth, 
		   title = {Extended Yale}, 
		   xlabel = {\small iteration}, 
		   ylabel = {\small $\%$ classification},
		   % ylabel + [Y LABEL],
		   height = .22 \textheight,
           ytick = {0.5,0.6,...,1},
           ymax = 1,
           ymin = 0.5
		 ]
			  \addplot+[mark=triangle*,OrangeRed,thick,mark options={ fill=OrangeRed}] table[x index = 0, y index = 1] {data_2306/data_classification_exyale_noW};
		   \addplot+ [mark=*,Peach,thick,mark options={ fill=Peach}]table[x index = 0, y index = 2] {data_2306/data_classification_exyale_noW};
		    \addplot+ [mark=square*,NavyBlue,thick,mark options={ fill=NavyBlue}]table[x index = 0, y index = 3] {data_2306/data_classification_exyale_noW};
		    \addplot+ [mark=diamond*,BrickRed,thick,mark options={ fill=BrickRed}]table[x index = 0, y index = 4] {data_2306/data_classification_exyale_noW};
		%}
		%}
		%\legend{ \small PALM, \small \spalm, \small PALM-TT,  \small \spalmtt  };
		\end{axis}
\end{tikzpicture}%	\caption{BLA BLA.} \label{fig:cg}
\\
\begin{tikzpicture}
 		\begin{axis}[width=. 5* \textwidth, 
 		   title = {MNIST}, 
           xlabel = {\small iteration}, 
		   ylabel = {\small $\%$ classification},
 		   % ylabel + [Y LABEL],
 		   height = .22 \textheight,
            ytick = {0.5,0.6,...,1},
            ymax = 1,
            ymin = 0.5
 		 ]
  		  \addplot+[mark=triangle*,OrangeRed,thick,mark options={ fill=OrangeRed}] table[x index = 0, y index = 1] {data_2306/data_classification_MNIST_noW};
 		   \addplot+ [mark=*,Peach,thick,mark options={ fill=Peach}]table[x index = 0, y index = 2] {data_2306/data_classification_MNIST_noW};
 		    \addplot+ [mark=square*,NavyBlue,thick,mark options={ fill=NavyBlue}]table[x index = 0, y index = 3] {data_2306/data_classification_MNIST_noW};
 		    \addplot+ [mark=diamond*,BrickRed,thick,mark options={ fill=BrickRed}]table[x index = 0, y index = 4] {data_2306/data_classification_MNIST_noW};
 		 		\end{axis}
 \end{tikzpicture}%	\caption{BLA BLA.} \label{fig:cg}
  ~\begin{tikzpicture}
 		\begin{axis}[width=. 5* \textwidth, 
 		   title = {Fashion MNIST}, 
           xlabel = {\small iteration}, 
		   ylabel = {\small $\%$ classification},
 		   % ylabel + [Y LABEL],
 		   height = .22 \textheight,
           ytick = {0.5,0.6,...,1},
           ymax = 1,
           ymin = 0.5
 		 ]
 		  \addplot+[mark=triangle*,OrangeRed,thick,mark options={ fill=OrangeRed}] table[x index = 0, y index = 1] {data_2306/data_classification_fashionMNIST_noW};
 		   \addplot+ [mark=*,Peach,thick,mark options={ fill=Peach}]table[x index = 0, y index = 2] {data_2306/data_classification_fashionMNIST_noW};
 		    \addplot+ [mark=square*,NavyBlue,thick,mark options={ fill=NavyBlue}]table[x index = 0, y index = 3] {data_2306/data_classification_fashionMNIST_noW};
 		    \addplot+ [mark=diamond*,BrickRed,thick,mark options={ fill=BrickRed}]table[x index = 0, y index = 4] {data_2306/data_classification_fashionMNIST_noW};
 		%}
 		%}
 		%\legend{ \small PALM, \small \spalm, \small PALM-TT,  \small \spalmtt };
 		\end{axis}
 \end{tikzpicture}%	\caption{BLA BLA.} \label{fig:cg}
\\
\ref{named2}
\caption{Classification performance of PALM-DL, \spalmdot-DL, PALM-DL-TT and \spalmtt with respect to the number of iterations for four different database, using the formulations of Section \ref{sec:dl_problem}.}
\label{fig:class_noW}
\end{figure}

%{\color{red} TOGLIERE!!!!!!!!
%The second strategy consists in applying the various PALM variants  to the penalized problems
% (\ref{eq:DWX_palm}) and (\ref{eq:PALM_classification_tt}).
%% Several choices of $\beta$ were explored, while here we only report results for
%%$\beta=1$ (Figure \ref{fig:class_W}), which shows one of the best classification performances ({\color{debianred} check}).
%Also in this setting,  the \spalm implementations outperform the PALM based ones as shown in Figure~\ref{fig:class_W}. 
%%The use of Tensor-Train does not have any significant impact on the percentage of correctly classified images.
%It is also interesting to observe that considering the formulation (\ref{eq:DWX_palm}) does 
%not result in better classification rates compared with (\ref{eq:dl_ind}).
%Once again, the tensor approach behaves similarly to the matrix formulation.
%%\pgfplotsset{footnotesize,samples=10}
%}

% 

\subsection{Memory saving truncated approach}
One of the challenges in dealing with huge databases is to reduce memory requirements. 
For a database $\mathcal{Y}\in\mathbb{R}^{n_1\times n_2 \times n_e \times n_p}$, PALM-DL and
sPALM-DL store 
$\mathcal{D}\in\mathbb{R}^{n_1\times n_2\times k}$ and 
(sparse) $\mathcal{X}\in\mathbb{R}^{k\times n_e \times n_p}$, requiring 
$m_P:=n_1n_2k+\tau n_e n_p$  memory allocations.
This quantity can be quite large in real image applications. 
We next investigate the possibility of truncating the tensor decomposition, possibly without interfering with
the classification performance.
%Thus we want to understand whether truncated tensor decompositions are able to reduce memory requirements without spoling the classification performance. 
In the Tensor-Train based algorithms PALM-DL-TT and \spalmtt\, storage for the arrays
$G_1\in\mathbb{R}^{n_1\times r_1}$, 
$\mathcal{G}_2\in\mathbb{R}^{r_1\times n_2\times r_2}$, 
$G_3\in\mathbb{R}^{r_2\times k}$ and 
$\mathcal{X}\in\mathbb{R}^{k\times n_e \times n_p}$ is required, yielding
$m_{TT}:=n_1r_1+r_1n_2r_2+r_2k+\tau n_en_p$ allocations. 
The value of $r_2$ determines whether the (truncated) TT approach is more memory efficient than
the full scheme by comparing  $m_{TT}$ and $m_P$. 
%% The complete storage is reached for $r_2=n_1n_2$.
In Figure \ref{fig:class_PALM_r2} we show the classification rates for all TT based methods on two
of the datasets after 50 iterations, as $r_2$ varies up to the maximum value obtainable for that dataset ($r_2\le 225$ and
$r\le 300$ for MIT-CBCL and Extended Yale, resp.).
%
%To this end we test all the Tensor-Train based algorithms on the MIT-CBCL and Extended Yale database with the same parameters described in the previous section but with different values for the TT-rank $r_2$. Notice that the maximum value of $r_2$ is equal to the total number of pixels of each image $n_1n_2$. Thus, for the MIT-CBCL database the maximum value of $r_2$ is 225, while for the Extended Yale is 300. Here, the maximum number of iterations is equal to 50.  The results are summarized in Figure \ref{fig:class_PALM_r2}. 
We note that {the TT variants} are able to achieve good classification performance also with small values of $r_2$. 
In particular, for the MIT-CBCL and \spalmtt choosing a value of $r_2$ greater than $40$ has no benefit on the 
classification performance, suggesting the use of $r_2=40$, thus reducing the overall memory costs
with respect to PALM ($m_P=108,945$ vs $m_{TT}=36,585$). 
Similarly, for Extended Yale the {value $r_2=150$ } can be chosen without 
dramatically spoiling the classification performance. 
In other words the Tensor-Train Decomposition enables us to store the information 
for classification purposes in a more compact manner. 
%{The spectral step enables us to further reduce memory requirements with respect to a Lipschitz based one in particular for the MIT-CBCL database. da togliere}

%\pgfplotsset{footnotesize,samples=10}
\begin{figure}[!ht]
\centering
\begin{tikzpicture}
		\begin{axis}[width=. 5* \textwidth, 
		   title = {MIT-CBCL}, 
		   legend columns=4,
           legend entries={ \small PALM-DL-TT, \small \spalmtt, \small PALM-CDL-TT,  \small sPALM-CDL-TT },
           legend to name=named4,
		   %legend style={at={(0.5,0.3)}}%,anchor=north},
		   legend cell align=center,
		   xlabel = {\small TT-rank $r_2$}, 
		   ylabel = {\small $\%$ classification},
		   % ylabel + [Y LABEL],
		   height = .22 \textheight,
		 ]
		%\foreach \j in {1,2,3,4} {
			% Mark = none per togliere i pallozzi
	%	  \addplot+ table[x index = 0, y index = \j] {./data_classification_fashionMNIST};
		  \addplot+[mark=triangle*,OrangeRed,thick,mark options={ fill=OrangeRed}] table[x index = 0, y index = 1] {data_2306/data_truncation_MIT};
		   \addplot+ [mark=*,Peach,thick,mark options={ fill=Peach}]table[x index = 0, y index = 2] {data_2306/data_truncation_MIT};
		    \addplot+ [mark=square*,NavyBlue,thick,mark options={ fill=NavyBlue}]table[x index = 0, y index = 3] {data_2306/data_truncation_MIT};
		    \addplot+ [mark=diamond*,BrickRed,thick,mark options={ fill=BrickRed}]table[x index = 0, y index = 4] {data_2306/data_truncation_MIT};
		%}
		%}
		%\legend{ \small PALM, \small \spalm, \small PALM-TT,  \small \spalmtt  };
		\end{axis}
\end{tikzpicture}%	\caption{BLA BLA.} \label{fig:cg}
\begin{tikzpicture}
		\begin{axis}[width=. 5* \textwidth, 
		   title = {Extended Yale}, 
		   %legend columns=4,
           %legend entries={ \small PALM-TT, \small \spalmtt, \small PALM-W TT,  \small sPALM-W TT },
           %legend to name=named,
		   %legend style={at={(0.5,0.3)}}%,anchor=north},
		   %legend cell align=center,
		   %title = {Vector $y = y_0 + 0.1 \cdot \texttt{randn}$}, 
		   xlabel = {\small TT-rank $r_2$}, 
		   ylabel = {\small $\%$ classification},
		   % ylabel + [Y LABEL],
		   height = .22 \textheight,
		 ]
		%\foreach \j in {1,2,3,4} {
			% Mark = none per togliere i pallozzi
	%	  \addplot+ table[x index = 0, y index = \j] {./data_classification_fashionMNIST};
		  \addplot+[mark=triangle*,OrangeRed,thick,mark options={ fill=OrangeRed}] table[x index = 0, y index = 1] {data_2306/data_truncation_EXYALE};
		   \addplot+ [mark=*,Peach,thick,mark options={ fill=Peach}]table[x index = 0, y index = 2] {data_2306/data_truncation_EXYALE};
		    \addplot+ [mark=square*,NavyBlue,thick,mark options={ fill=NavyBlue}]table[x index = 0, y index = 3] {data_2306/data_truncation_EXYALE};
		    \addplot+ [mark=diamond*,BrickRed,thick,mark options={ fill=BrickRed}]table[x index = 0, y index = 4] {data_2306/data_truncation_EXYALE};
		%}
		%}
		%\legend{ \small PALM, \small \spalm, \small PALM-TT,  \small \spalmtt  };
		\end{axis}
 \end{tikzpicture}%	\cap
\\
\ref{named4}
\caption{Classification performance of the Tensor-Train based algorithms for different values of the TT-rank $r_2$.}
\label{fig:class_PALM_r2}
\end{figure}
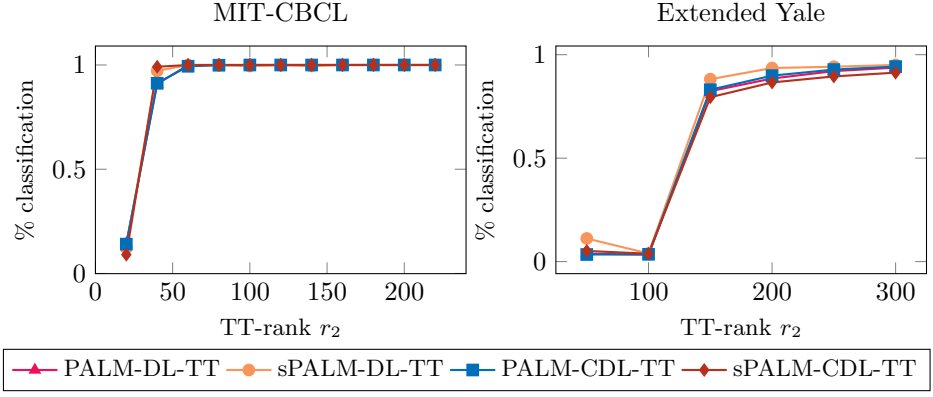

%%%%%%%%%%%%%%%%%%%%%%%%%%%%%%%%%%%
\subsection{A classification example in 4D setting}\label{sec:4D}
% In the previous sections we considered a $4$th order 
% tensor data $\mathcal{Y}$ and a third order tensor $\mathcal{D}$. 
The Tensor-Train decomposition allows us to
readily extend the 3D formulation, explored in the previous sections, to higher-order tensors. In the following 
%dictionary learning formulation can be easily extended to higher-order settings. 
we analyze the classification performance of \palmtt and \spalmtt for a $5$th order tensor 
$\mathcal{Y}\in\mathbb{R}^{n_1\times n_2 \times n_3\times \tilde{n}_e\times n_p}$ and we compare them with their matrix versions PALM-DL and \spalmdot-DL. First of all, we write the TT dictionary learning problem as in 
(\ref{eq:dl_tensor_N}) with $q=3$ and $s=5$. 
Furthermore, we notice that
 
{\footnotesize
 \begin{eqnarray} \label{eq:ttdl_4d}
\hskip -.3in\|\mathcal{Y}-\left(G_1\times_2^1 \mathcal{G}_2 \times_3^1\mathcal{G}_3 \times_3^1 G_4\right)
\times_{4}^1\mathcal{X}\|_F
=
\|\mathcal{Y}_{[3]}-\left(I_{n_2n_3}\otimes G_1\right)\left(I_{n_3}\otimes\left(\mathcal{G}_2\right)_{[2]}\right)
\left(\mathcal{G}_3\right)_{[2]}G_4\mathcal{X}_{[1]}\|_F
%&& \|\mathcal{Y}-\left(\mathcal{G}_1\times_2^1 \mathcal{G}_2 \times_3^1\mathcal{G}_3 \times_3^1 \mathcal{G}_{4}\right)
%\times_{4}^1\mathcal{X}\|_F^2\nonumber\\
%&&\hskip 0.4in = 
%\|\mathcal{Y}_{[3]}-\left(I_{n_2n_3}\otimes G_1\right)\left(I_{n_3}\otimes\left(\mathcal{G}_2\right)_{[2]}\right)
%\left(\mathcal{G}_3\right)_{[2]}G_4\mathcal{X}_{[1]}\|_F^2  ,
\end{eqnarray}
}
where $G_1$,$\left(\mathcal{G}_2\right)_{[2]}$, $\left(\mathcal{G}_3\right)_{[2]}$, are matrices with 
orthonormal columns, and more precisely,
$G_1\in\Theta_{n_1,r_1}$, $\left(\mathcal{G}_2\right)_{[2]}\in\Theta_{r_1n_2,r_2}$, 
$\left(\mathcal{G}_3\right)_{[2]}\in\Theta_{r_2n_3,r_3}$ 
while $G_4\in\Omega_{r_3,k}$ has unit norm columns. 
The following proposition provides an expression for the gradient of $H$ and 
corresponding Lipschitz constants using the orthogonality of the first three TT-cores.
 
\vskip 0.1in
 \begin{proposition} \label{prop:lipschitz_PALMTT_4D}
Let $G_2=\left(\mathcal{G}_2\right)_{[2]}$, 
$G_3=\left(\mathcal{G}_3\right)_{[2]}$,
$X=\mathcal{X}_{[1]}$, $Y=\mathcal{Y}_{[3]}$ and 
  \begin{equation*}
  H(G_1, G_2 ,G_3,G_4, X)=\left\|Y-\left(I_{n_2n_3}\otimes G_1\right)\left(I_{n_3}\otimes G_2\right)G_3 G_4X\right\|_F^2.
  \end{equation*}
 Then the partial gradients of $H$ satisfy Assumption~A3. 
Moreover, the following upper bounds for the Lipschitz constants hold: 
$L_X = 2 \|G_4\|_2^2$, $L_{G_3}= 2 \|G_4 X \|_2^2$, $L_{G_4}= 2 \|X \|_2^2$,
$L_{G_1} = 2  \|\sum_{i=1}^p A_i A_i^T\|_2$, $L_{G_2}= 2 \|\sum_{i=1}^p B_i B_i^T\|_2$,
where $A_i\in\mathbb{R}^{r_1\times n_2}$ is the matricization of the $i$th column of  $A=(I_{n_3}\otimes G_2)\, G_3 G_4 X$   and $B_i\in\mathbb{R}^{r_1\times n_2}$ is the matricization of the $i$th column of $B=G_3 G_4X$.
 \end{proposition}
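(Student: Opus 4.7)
The proof strategy is a direct extension of that of Proposition~\ref{prop:lipschitz_PALMTT}, with one extra TT-core. The key structural fact is that if $G_1$ and $G_2$ have orthonormal columns, then so do the Kronecker lifts $I_{n_2 n_3}\otimes G_1$ and $I_{n_3}\otimes G_2$, and $G_3$ is itself orthonormal in its columns. Hence, setting $M = (I_{n_2 n_3}\otimes G_1)(I_{n_3}\otimes G_2) G_3 G_4$, repeated cancellations of the type $(I_a\otimes V)^T(I_a\otimes V)=I$ collapse the Gram matrix to $M^T M = G_4^T G_4$.

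For the three ``outer'' blocks I would compute each partial gradient directly and apply this cancellation, exactly as in (\ref{eq:nabla_x})--(\ref{eq:nabla_g3}). Namely, $\nabla_X H = 2 M^T(MX - Y)$ gives $L_X = 2\|G_4^T G_4\|_2 = 2\|G_4\|_2^2$; differentiating with respect to $G_4$ and $G_3$, and using the orthonormality of the factors standing to the left, produces $L_{G_4} = 2\|X\|_2^2$ and $L_{G_3} = 2\|G_4 X\|_2^2$. These computations are mechanical and introduce no new ideas beyond the 3D case already treated.

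The delicate step is the derivation of $L_{G_1}$ and $L_{G_2}$, since these blocks are embedded inside the Kronecker factors. For $G_1$ I would follow exactly the column-by-column argument already used for $L_{G_1}$ in Proposition~\ref{prop:lipschitz_PALMTT}: the $i$th column of $(I_{n_3}\otimes G_2)\,G_3 G_4 X$ is identified with the matricization $A_i$, and acting by $I_{n_2 n_3}\otimes G_1$ corresponds to left-multiplication by $G_1$ on $A_i$. This gives $\nabla_{G_1}H = 2\sum_i (G_1 A_i - Y_i) A_i^T$, and a direct norm estimate yields $L_{G_1} = 2\,\|\sum_i A_i A_i^T\|_2$. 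For $G_2$ I would repeat the idea one level inward: after fixing $G_1$, the identity $(I_{n_2 n_3}\otimes G_1)^T (I_{n_2 n_3}\otimes G_1)=I$ allows me to rewrite $H$, up to constants independent of $G_2$, as $\|\widetilde Y - (I_{n_3}\otimes G_2) B\|_F^2$, where $\widetilde Y$ absorbs $G_1$. Applying the same unfolding trick to the matricizations $B_i$ of the columns of $B = G_3 G_4 X$ gives $\nabla_{G_2}H = 2\,G_2 \sum_i B_i B_i^T + (\text{terms independent of } G_2)$, whence $L_{G_2} = 2\,\|\sum_i B_i B_i^T\|_2$.

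The main technical obstacle is bookkeeping the correct reshape conventions when unfolding across two successive Kronecker factors for the $G_2$ block, and in particular verifying that one can indeed ``peel off'' the outer factor before differentiating in $G_2$. Once the reshapings are fixed, Assumption~A3 follows automatically: each partial gradient is an affine function of the corresponding block, with coefficients that are continuous in, and hence bounded on bounded subsets of, the remaining variables.
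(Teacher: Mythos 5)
Your proposal is correct and follows essentially the same route as the paper: you compute each partial gradient explicitly, cancel the Gram matrices of the orthonormal Kronecker lifts $I_{n_2n_3}\otimes G_1$ and $I_{n_3}\otimes G_2$ (and of $G_3$) to get $L_X$, $L_{G_3}$, $L_{G_4}$, and treat the embedded blocks $G_1$ and $G_2$ by the column-by-column matricization argument, peeling off the outer orthonormal factor for $G_2$. This is precisely what the paper does, which writes down the same partial gradients and then invokes the proof of Proposition~\ref{prop:lipschitz_PALMTT} to obtain the Lipschitz moduli.
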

 \begin{proof}
  By direct computation we obtain the following expressions for the partial gradients of $H$:
\begin{eqnarray*}
  %\nabla_X H& = &
   \nabla_X H = 
-2 \left(I_{n_2n_3}\otimes G_1\right)\left(I_{n_3}\otimes G_2\right)G_3 G_4)^T Y +2 (G_3 G_4)^T G_3 G_4X 
%\\
%
  %Denoting by $Y_i$ the matricization of the $i$th column of $Y$ 
%
%\nabla_{G_1} H&=&2 \sum_{i=1}^p (-Y_i + G_1A_i)A_i^T,
\end{eqnarray*}
and $\nabla_{G_1} H=2 \sum_{i=1}^p (-Y_i + G_1A_i)A_i^T$, 
where $Y_i$  denotes the matricization of the $i$th column of $Y$, 
$\nabla_{G_2} H= {2 \sum_{i=1}^p (-Y_i + G_1B_i)B_i^T}$, 
%
  %Let $B_i$ be the matricization of the $i$th column of $B$, then
%  
\begin{eqnarray*}
%\nabla_{G_2} H&=& {2 \sum_{i=1}^p (-Y_i + G_1B_i)B_i^T}, \\
  %\left(2y_i^T\left(b_i^T\otimes I_{r_1n_2}\right)+2 vec(G_2)^T\left(B_i\otimes I_{r_1n_2}\right)\left(B_i^T\otimes I_{r_1n_2}\right)\right)$\\
  %For $\nabla_{G_3}$ we obtain
    \nabla_{G_3}H = 2((I_{n_2n_3}\otimes G_1)(I_{n_3}\otimes G_2))^T
\left ( - Y + (I_{n_2n_3}\otimes G_1)(I_{n_3}\otimes G_2)) G_3 G_4 X\right )  (G_4 X)^T  .
  \end{eqnarray*}
%  \begin{eqnarray*}
%   && \nabla_{G_3}H= -2 \left(\left(I_{n_2n_3}\otimes G_1\right)\left(I_{n_3}\otimes G_2\right)\right)^T Y (G_4 X)^T \\
%   &&\hskip 0.4in + 2 \left(\left(I_{n_2n_3}\otimes G_1\right)\left(I_{n_3}\otimes G_2\right)\right)^T\left(\left(I_{n_2n_3}\otimes G_1\right)\left(I_{n_3}\otimes G_2\right)\right) G_3 G_4 X  (G_4 X)^T.
%  \end{eqnarray*}
  For $\nabla_{G_4}$, we have $\nabla_{G_4}H=2 C^T( -Y+C G_4X) X^T$,  where 
$C=(I_{n_2n_3}\otimes G_1 ) (I_{n_3}\otimes G_2) G_3$.
  Using these expressions we follow the proof of Proposition~\ref{prop:lipschitz_PALMTT} to obtain the 
required Lipschitz moduli.
 \end{proof}

 To test this formulation we consider the COIL-100 database \cite{CAVE_0189}, containing
RGB images of $n_p=100$ different objects in $n_e=72$ view angles. 
For this experiment the size of each image is reduced to $16\times 16\times 3$ to 
preserve the relation among the number of pixels $n=n_1n_2n_3$, the atoms of the dictionary 
$k$ and the total number of images $n_en_p$ (i.e., $n<k<n_en_p$). The 
training set $\mathcal{Y}$ is composed by all the objects in ${\tilde n}_e = 54$ different view angles, 
 corresponding to $75\%$ of the total number of images. %, as for the 3D numerical experiments. 
We set the number of atoms to $k=1400$ and the sparsity parameter to $\tau=4n_p$. 
The reported results correspond to the classification success along the iterations for a maximum of 50 iterations.
%As stopping criterion we just consider the maximum number of iterations to avoid early stopping of the algorithms. The classification matrix is computed using (\ref{eq:W}) with $\beta=0$ and $X=\mathcal{X}_{[1]}$.
%
From Figure~\ref{fig:class_4d} we can notice that the spectral step enhances the 
classification rate  significantly for the TT formulation. In particular at 
the 50$th$ iteration the classification rate of \spalmtt is equal to 
$77.11\%$ while the other methods do not reach $70\%$. Furthermore we observe that  using a number of 
iterations greater than $30$ has almost no impact on the classification rate. 

\begin{figure}
\centering
\begin{tikzpicture}
		\begin{axis}[width=. 8* \textwidth, 
		   title = {COIL-100}, 
		   legend columns=4,
           legend entries={ \small PALM-DL, \small \spalmdot-DL, \small PALM-DL-TT,  \small \spalmtt },
           legend to name=named7,
		   %legend style={at={(0.5,0.3)}}%,anchor=north},
		   legend cell align=center,
		   xlabel = {\small iteration}, 
		   ylabel = {\small $\%$ classification},
		   % ylabel + [Y LABEL],
		   height = .22 \textheight,
           ytick = {0.5,0.6,...,0.8},
           ymax = 0.8,
           ymin = 0.5
		 ]
		%\foreach \j in {1,2,3,4} {
			% Mark = none per togliere i pallozzi
	%	  \addplot+ table[x index = 0, y index = \j] {./data_classification_fashionMNIST};
		  \addplot+[mark=triangle*,OrangeRed,thick,mark options={ fill=OrangeRed}] table[x index = 0, y index = 1] {data_2306/data_classification_4d};
		   \addplot+ [mark=*,Peach,thick,mark options={ fill=Peach}]table[x index = 0, y index = 2] {data_2306/data_classification_4d};
		    \addplot+ [mark=square*,NavyBlue,thick,mark options={ fill=NavyBlue}]table[x index = 0, y index = 3] {data_2306/data_classification_4d};
		    \addplot+ [mark=diamond*, 	BrickRed,thick,mark options={ fill= 	BrickRed}]table[x index = 0, y index = 4] {data_2306/data_classification_4d};
		%}
		%}
		\legend{ \small PALM-{DL}, \small \spalmdot-{DL}, \small PALM-{DL}-TT,  \small \spalmttdot  };
		\end{axis}
\end{tikzpicture}%	\caption{BLA BLA.} \label{fig:cg}
\\
\ref{named7}
\caption{Classification performance of PALM-DL, \spalmdot-DL, PALM-DL-TT and 
\spalmtt with respect to the number of iterations for the COIL-100 database.\label{fig:class_4d}}
\end{figure}
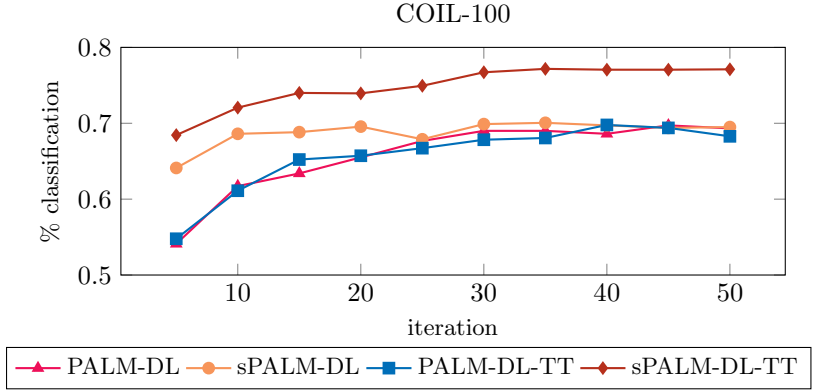

%%%%%%%%%%%%%%%%%%%%%%%%%%%%%%%%%%%%
\section{Conclusions} \label{sec:conclusions}
%https://www.math.uh.edu/~rohop/fall_06/Chapter5.pdf
Exploiting  data multidimensionality is crucial in dictionary learning and, in particular, when DL 
is applied to image classification.
To address this challenge, we have devised the new  method \spalmdot\ in the class of PALM-type convergent algorithms, and
proposed both a matrix and tensor-train formulation for its implementation.
The new approach implicitly includes second order information by using
spectral stepsizes {that exploit the alternating iteration history}.
The resulting algorithm is competitive with respect to different recent variants of
PALM, as illustrated by our numerical experiments. Moreover, 
since sPALM is described for general nonconvex non-smooth 
problems, it may serve as a basis for further algorithmic acceleration 
techniques \cite{Bonettini2,TITAN,iPALM}. 
Finally, we have experimentally shown that the TT formulation may bring advantages in terms of memory
requirements and rate of successful classification, especially when applied to 4D databases.

\section*{Acknowledgments}
We would like to thank Nicolas Gillis for insightful comments on a previous version of this work.

\section*{Data availability} The data that support the findings of this study are available from the corresponding author upon request.
%\printbibliography

\bibliography{./bibliografia_jabref}

%%%%%%%%%%%%%%%%%%%%%%%%%%%%%%%%%%%%%%%%%%%%
\appendix

%%%%%%%%%%%%%%%%%%%%%%%%%%%%%%%%%%%%%%%%%%%%%%%%%1
%\input{appendix_nonsmooth_2.tex}

%%%%%%%%%%%%%%%%%%%%%%%%%%%%%%%%%%%%%%%%%%%%%%%%%1
\section{Analysis of the convergence history of the PALM variants in solving the DL problem} \label{app:test}{%\color{magenta}
In this section we compare the matrix based methods in Table \ref{codes}
for the solution of the matrix DL problem (\ref{eq:dl_matrix}).
To this purpose, in addition to a safeguard
strategy on a maximum (loose) number of iterations,
we consider the following stopping criterion based on iterate variation, that 
takes into account possible different scalings in the block variables. The criterion is given by% that is
$
\|\bar D -D\|_F\leq {\tt tol_D},\ \|\bar X - X\|_F  \leq {\tt tol_X}
$. 
Here the upper bar denotes the approximate solution from the previous iteration
and  ${\tt tol_D}$ and ${\tt tol_X}$ are tolerances set equal to ${\tt tol_D}=10^{-3}\sqrt{n_1n_2k}$ and ${\tt tol_X}=10^{-3}\sqrt{kn_en_p}$.

The plots in Figure~\ref{fig:res_bipalm} show the values of %the function 
$H(D,X)=\|Y-DX\|_F$ as the CPU time proceeds, for the datasets in Table~\ref{tab:car_data}. 
As expected, the use of a backtracking rule (iPALMbt-DL) to estimate the Lipschitz constants  yields a much faster decrease in the residual value than using the constant stepsize (PALM-DL and iPALM-DL) but  each iteration of iPALMbt-DL is more expensive,
see the  steeper slope in Figure~\ref{fig:res_bipalm_iter_cpu}.
Moreover, for all the databases, \spalmdot-DL converges in far fewer iterations than 
\bipalmdot-DL, resulting in a significantly lower overall CPU time than for the other methods. These
results illustrate the advantage of using higher order information.  % gaining from the 17\% to the 50\% of CPU time.}

}

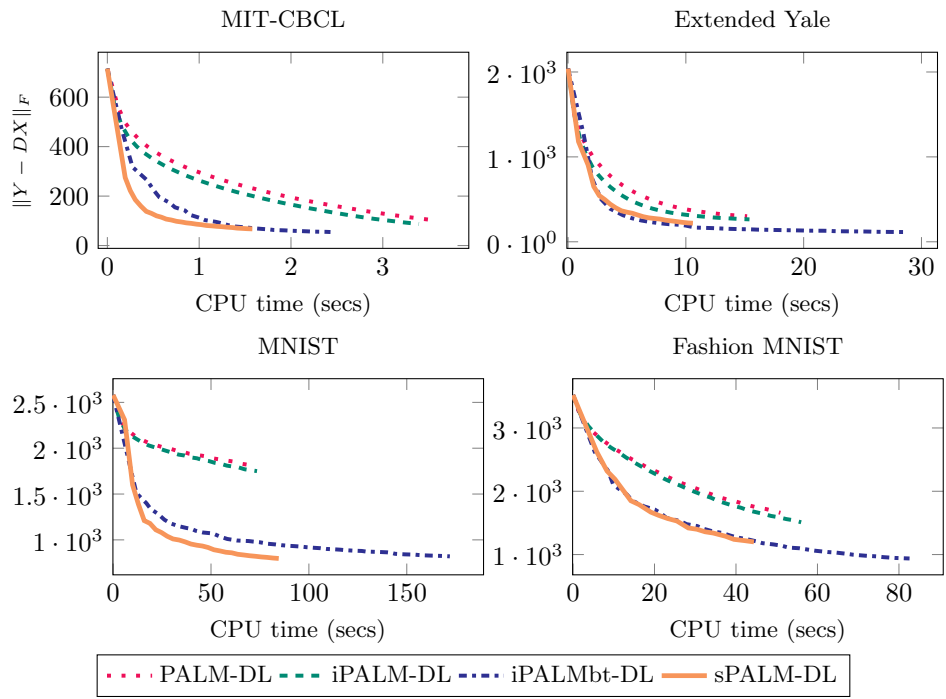
\begin{figure}[htb]
\centering
\begin{tikzpicture}
%\begin{semilogyaxis} %per scala logaritmica
		\begin{axis}[width=.5* \textwidth, 
		   title = {\small MIT-CBCL}, 
		   legend columns=4,
           legend entries={ \small PALM-DL,  \small iPALM-DL, \small \bipalmdot-DL, \small \spalmdot-DL },
           legend to name=named,
           %y tick label style={/pgf/number format/sci},
		   xlabel = {\small  CPU time (secs)}, 
		   ylabel = {\scriptsize $\|Y-DX\|_F$},
		   height = .2 \textheight,
		   xmin = -.1,
           %xmax = 4
		 ]
		  \addplot+[mark=none,OrangeRed,line width=1.5,loosely dotted,mark options={ fill=OrangeRed}] table[x index = 1, y index = 2] {data_rev/MIT_DX_time_iter_palm};
        \addplot+ [mark=none,PineGreen,line width=1.5,dashed,mark options={ fill=PineGreen}]table[x index = 1, y index = 2] {data_rev/MIT_DX_time_iter_ipalm};
        \addplot+ [mark=none,Blue,line width=1.5,dashdotted,mark options={ fill=Blue}]table[x index = 1, y index = 2] {data_rev/MIT_DX_time_iter_ipalmbt_spalm};
		   \addplot+ [mark=none,Peach,line width=1.8,mark options={ fill=Peach}]table[x index = 3, y index = 4] {data_rev/MIT_DX_time_iter_ipalmbt_spalm};	    
		\end{axis}
\end{tikzpicture}	
\begin{tikzpicture}
%\begin{semilogyaxis} %per scala logaritmica
		\begin{axis}[width=.5* \textwidth, 
		   title = {\small Extended Yale}, 
		    xlabel = {\small  CPU time (secs)},  
y tick label style={/pgf/number format/sci},
		   height = .2 \textheight,
		   		   xmin = -.1,
           %xmax = 50
		 ]
		 \addplot+[mark=none,OrangeRed,line width=1.5,loosely dotted,mark options={ fill=OrangeRed}] table[x index = 1, y index = 2] {data_rev/YALE_DX_time_iter_palm_ipalm};
        \addplot+ [mark=none,PineGreen,line width=1.5,dashed,mark options={ fill=PineGreen}]table[x index = 3, y index = 4] {data_rev/YALE_DX_time_iter_palm_ipalm};
        \addplot+ [mark=none,Blue,line width=1.5,dashdotted,mark options={ fill=Blue}]table[x index = 1, y index = 2] {data_rev/YALE_DX_time_iter_ipalmbt};
		   \addplot+ [mark=none,Peach,line width=1.8,mark options={ fill=Peach}]table[x index = 1, y index = 2] {data_rev/YALE_DX_time_iter_spalm};
		\end{axis}
\end{tikzpicture}
\begin{tikzpicture}
%\begin{semilogyaxis} %per scala logaritmica
		\begin{axis}[width=.5* \textwidth, 
		   title = {\small MNIST}, 
		   xlabel = {\small  CPU time (secs)},  
y tick label style={/pgf/number format/sci},
		   height = .2 \textheight,
		   		   xmin = -.1,
           %xmax = 50
		 ]
		 \addplot+[mark=none,OrangeRed,line width=1.5,loosely dotted,mark options={ fill=OrangeRed}] table[x index = 1, y index = 2] {data_rev/MNIST_DX_time_iter_palm_ipalm};
        \addplot+ [mark=none,PineGreen,line width=1.5,dashed,mark options={ fill=PineGreen}]table[x index = 3, y index = 4] {data_rev/MNIST_DX_time_iter_palm_ipalm};
        \addplot+ [mark=none,Blue,line width=1.5,dashdotted,mark options={ fill=Blue}]table[x index = 1, y index = 2] {data_rev/MNIST_DX_time_iter_ipalmbt};
		   \addplot+ [mark=none,Peach,line width=1.8,mark options={ fill=Peach}]table[x index = 1, y index = 2] {data_rev/MNIST_DX_time_iter_spalm};
		\end{axis}
\end{tikzpicture}%	
\begin{tikzpicture}
%\begin{semilogyaxis} %per scala logaritmica
		\begin{axis}[width=. 5* \textwidth, 
		   title = {\small Fashion MNIST}, 
		   xlabel = {\small  CPU time (secs)}, 
y tick label style={/pgf/number format/sci},
		   height = .2 \textheight,
		   		   xmin = -.1,
          % xmax = 50
		 ]
\addplot+[mark=none,OrangeRed,line width=1.5,loosely dotted,mark options={ fill=OrangeRed}] table[x index = 1, y index = 2] {data_rev/FASHION_DX_time_iter_palm_ipalm};
        \addplot+ [mark=none,PineGreen,line width=1.5,dashed,mark options={ fill=PineGreen}]table[x index = 3, y index = 4] {data_rev/FASHION_DX_time_iter_palm_ipalm};
        \addplot+ [mark=none,Blue,line width=1.5,dashdotted,mark options={ fill=Blue}]table[x index = 1, y index = 2] {data_rev/FASHION_DX_time_iter_ipalmbt};
		   \addplot+ [mark=none,Peach,line width=1.8,mark options={ fill=Peach}]table[x index = 1, y index = 2] {data_rev/FASHION_DX_time_iter_spalm};

		\end{axis}
\end{tikzpicture}
\\
\ref{named}
\caption{Residual norm history for PALM-DL, iPALM-DL, \bipalmdot-DL, \spalmdot-DL. \label{fig:res_bipalm}}
\end{figure}
 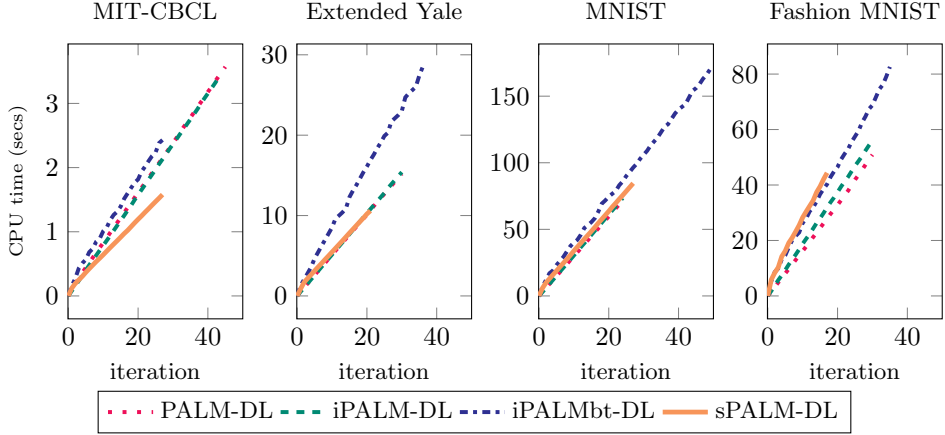
\begin{figure}[htb]
 \centering
 \begin{tikzpicture}
 		\begin{axis}[width=.3* \textwidth, 
 		   title = {\small MIT-CBCL}, 
 		   legend columns=4,
            legend entries={ \small PALM-DL, \small iPALM-DL, \small \bipalmdot-DL, \small \spalmdot-DL },
            legend to name=named5,
 		   xlabel = {\small  iteration}, 
 	   ylabel = { \scriptsize CPU time (secs)}, 
 		   height = .25 \textheight,
 		   		   xmin = -.1,
            xmax = 50
 		 ]
 		  \addplot+[mark=none,OrangeRed,line width=1.5,loosely dotted,mark options={ fill=OrangeRed}] table[x index = 0, y index = 1] {data_rev/MIT_DX_time_iter_palm};
        \addplot+ [mark=none,PineGreen,line width=1.5,dashed,mark options={ fill=PineGreen}]table[x index = 0, y index = 1] {data_rev/MIT_DX_time_iter_ipalm};
        \addplot+ [mark=none,Blue,line width=1.5,dashdotted,mark options={ fill=Blue}]table[x index = 0, y index =1] {data_rev/MIT_DX_time_iter_ipalmbt_spalm};
		   \addplot+ [mark=none,Peach,line width=1.8,mark options={ fill=Peach}]table[x index = 0, y index = 3] {data_rev/MIT_DX_time_iter_ipalmbt_spalm};	  
 		\end{axis}
 \end{tikzpicture}
 \begin{tikzpicture}
 		\begin{axis}[width=.3* \textwidth, 
 		   title = {\small Extended Yale}, 
 		   xlabel = {\small  iteration}, 
 		   height = .25 \textheight,
 		   		   xmin = -.1,
            xmax = 50
 		 ]
 \addplot+[mark=none,OrangeRed,line width=1.5,loosely dotted,mark options={ fill=OrangeRed}] table[x index = 0, y index = 1] {data_rev/YALE_DX_time_iter_palm_ipalm};
        \addplot+ [mark=none,PineGreen,line width=1.5,dashed,mark options={ fill=PineGreen}]table[x index = 0, y index = 3] {data_rev/YALE_DX_time_iter_palm_ipalm};
        \addplot+ [mark=none,Blue,line width=1.5,dashdotted,mark options={ fill=Blue}]table[x index = 0, y index = 1] {data_rev/YALE_DX_time_iter_ipalmbt};
		   \addplot+ [mark=none,Peach,line width=1.8,mark options={ fill=Peach}]table[x index = 0, y index = 1] {data_rev/YALE_DX_time_iter_spalm};
 		\end{axis}
 \end{tikzpicture}
 \begin{tikzpicture}
 		\begin{axis}[width=.3* \textwidth, 
 		   title = {\small MNIST}, 
 		   xlabel = {\small  iteration}, 
 		   height = .25 \textheight,
 		   		   xmin = -.1,
            xmax = 50
 		 ]
 		 \addplot+[mark=none,OrangeRed,line width=1.5,loosely dotted,mark options={ fill=OrangeRed}] table[x index = 0, y index = 1] {data_rev/MNIST_DX_time_iter_palm_ipalm};
        \addplot+ [mark=none,PineGreen,line width=1.5,dashed,mark options={ fill=PineGreen}]table[x index = 0, y index = 3] {data_rev/MNIST_DX_time_iter_palm_ipalm};
        \addplot+ [mark=none,Blue,line width=1.5,dashdotted,mark options={ fill=Blue}]table[x index = 0, y index = 1] {data_rev/MNIST_DX_time_iter_ipalmbt};
		   \addplot+ [mark=none,Peach,line width=1.8,mark options={ fill=Peach}]table[x index = 0, y index = 1] {data_rev/MNIST_DX_time_iter_spalm};
 		\end{axis}
 \end{tikzpicture}
 \begin{tikzpicture}
 		\begin{axis}[width=.3* \textwidth, 
 		   title = {\small Fashion MNIST}, 
 		   xlabel = {\small  iteration}, 
 		   height = .25 \textheight,
 		   		   xmin = -.1,
            xmax = 50
 		 ]
\addplot+[mark=none,OrangeRed,line width=1.5,loosely dotted,mark options={ fill=OrangeRed}] table[x index = 0, y index = 1] {data_rev/FASHION_DX_time_iter_palm_ipalm};
        \addplot+ [mark=none,PineGreen,line width=1.5,dashed,mark options={ fill=PineGreen}]table[x index = 0, y index = 3] {data_rev/FASHION_DX_time_iter_palm_ipalm};
        \addplot+ [mark=none,Blue,line width=1.5,dashdotted,mark options={ fill=Blue}]table[x index = 0, y index = 1] {data_rev/FASHION_DX_time_iter_ipalmbt};
		   \addplot+ [mark=none,Peach,line width=1.8,mark options={ fill=Peach}]table[x index = 0, y index = 1] {data_rev/FASHION_DX_time_iter_spalm};
 		\end{axis}
 \end{tikzpicture}
 \\
 \ref{named5}
 \caption{CPU time as iterations increase, for PALM-DL,  iPALM-DL, \bipalmdot-DL, \spalmdot-DL. \label{fig:res_bipalm_iter_cpu}}
 \end{figure}

 %{\color{red} TOGLIERE QUESTO QUI SOTTO PERCHe' NON COMMENTIAMO LA PERC DI CLASSIFICAZIONE in questa sezione
 %Table~\ref{tab:classbipalm} reports the number of iterations and the rate of classification success for all methods:
%\spalmdot-DL satisfies the stopping criterion in the smallest number of iterations, whereas
%\bipalmdot-DL reaches 50 iterations on all datatsets. }

% TABELLA CON LE PERCENTUALI
%  \begin{table}[hbt]
% \centering
% \begin{tabular}{ccccccccc}%{|c|c|c|c|c|c|c|}
% \toprule%\hline
%           &  \multicolumn{2}{c }{PALM-DL}  &  \multicolumn{2}{c }{iPALM-DL}  & \multicolumn{2}{c}{\bipalmdot-DL}  & \multicolumn{2}{c}{\spalmdot-DL}   \\ \midrule%\hline
%  Database &  $\%$ & \#it & $\%$ & \#it & $\%$ &  \#it &  $\%$ & \#it  \\ % \hline
% \midrule
% % Database &  PALM $\%$ & PALM it & \bipalm $\%$ & \bipalm it & \spalm $\%$ & \spalm it  \\ \hline
%  MIT-CBCL & $100$ & $45$ & $100$ & $43$ &$100$ & $27$ & $100$ & $27$\\ %\hline
% Ext'd Yale shrunk & $90.1$ & $30$ & $91.7$ &$41$  & $93.9$ & $36$ & $90.9$ & $21$ \\% \hline
% MNIST & $65.9$ & $24$ & $65.4$ &$24$ & $76.6$ & $50$ & $80.3$ & $27$ \\ %\hline
% Fashion MNIST &$65.12$ & $30$ & $todo$ & $30$ &$68.5$ & $35$ & $67.4$ & $17$ \\ %\hline
% \bottomrule
% \end{tabular}
% \caption{Successful classification rates and number of iteration of  PALM-DL, iPALM-DL,\bipalmdot-DL, \spalmdot-DL on four different databases.}
% \label{tab:classbipalm}
% \end{table}

%%%%%%%%%%%%%%%%%%%%%%%%%%%%%%%%%%%%%%%%%%%%%%%%
\section{Tools for Tensor-Train Decomposition}
\label{sec:tt_tools}
In this section we introduce some tensor notation and useful tools.
%We start by defining an $N$th order tensor as a multidimensional array. 
%{For example vectors (denoted by lowercase letters) are first order tensors and 
%matrices (denoted by capital letters) are second order tensors. 
%Higher order tensors are denoted by calligraphic letters. }
The \emph{fibers} of a tensor are obtained by fixing every index but one. 
In particular for third order tensors we can define column fibers (last two indices fixed), row fibers (first and third indices fixed) and tube fibers (first two indices fixed). The \emph{slices} of a tensor are defined by fixing only one index. For third order tensors, we can define the horizontal slices (first index fixed), the lateral slices (second index fixed) and the frontal slices (third index fixed).

\begin{definition} {\rm \cite[p.~458]{kolda}}
\label{def:scal_prod}
 Given two tensors $\mathcal{A}\in\mathbb{R}^{I_1\times\dots\times I_N}$ and $\mathcal{B}\in\mathbb{R}^{I_1\times\dots\times I_N}$, the scalar product between $\mathcal{A}$ and $\mathcal{B}$ is defined as
 \begin{equation}
  \langle \mathcal{A},\mathcal{B}\rangle=\sum_{i_N=1}^{I_N}\dots \sum_{i_1=1}^{I_1} a_{i_1,\dots, i_N}b_{i_1,\dots,i_N}.
 \end{equation}
\end{definition}
The Frobenius norm of a tensor is given 
by $\|\mathcal{A}\|^2_F=\langle\mathcal{A},\mathcal{A}\rangle$. 
If $\mathcal{A}$ and $\mathcal{B}$ are matrices, i.e. second order tensors, (\ref{def:scal_prod}) 
reduces to the standard definition of matrix scalar product.
We next define a tensor-matrix multiplication (i.e. \emph{n-mode product}) and a 
tensor-tensor multiplication (i.e. \emph{$m\choose n$ mode product}). 

 \begin{definition} {\rm \cite[p.~460]{kolda}}\label{def:nmodeprod}
%Given an $N$th order tensor 
Let $\mathcal{A}\in\mathbb{R}^{I_1\times I_2\times\dots\times I_N}$ and  %a matrix 
  $U\in\mathbb{R}^{J\times I_n}$. The n-mode product 
$\mathcal{A}\times_n U\in\mathbb{R}^{I_1\times\dots\times I_{n-1}\times J\times I_{n+1}\times
  \dots\times I_N}$
is given by
  \begin{equation*}
   (\mathcal{A}\times_n U)_{i_1,\dots, i_{n-1},j,i_{n+1},\dots,I_N}=\sum_{i_n=1}^{I_n} a_{i_1,i_2,\dots,i_n,\dots,i_N}u_{j,i_n}
  \end{equation*}
 \end{definition}
The $n$-mode product satisfies the commutative property when the multiplications are performed 
along different modes.
%
%In a similar way to the $n$-mode product, we introduce the ${m}\choose {n}$-product of a tensor.

\begin{definition}{\rm \cite{Cichocki2014}}\label{def:mchoosen}
The ${m}\choose {n}$-product of a tensor 
$\mathcal{A}\in\mathbb{R}^{I_1\times I_2\times\dots\times I_N}$ with 
a tensor $\mathcal{B}\in\mathbb{R}^{J_1\times J_2\times\dots\times J_M}$, such that $I_n=J_m$, is defined as
%\begin{equation*}
$\mathcal{C}=\mathcal{A}\times_n^m \mathcal{B}$,
%\end{equation*}
where $\mathcal{C}\in\mathbb{R}^{I_1\times\dots\times I_{n-1}\times I_{n+1}\times\dots\times I_N\times 
J_1\times\dots\times J_{m-1}\times J_{m+1}\times\dots\times J_M}$. Its entries are given by
%$ \mathcal{C}(i_1,\dots,i_{n-1},i_{n+1},\dots,i_N,j_1,\dots j_{m-1},j_{m+1},\dots,j_M)=$\\
%$\displaystyle\sum_{i=1}^{I_n} \mathcal{A}(i_1,\dots,i_{n-1},i,i_{n+1},\dots,i_N)\mathcal{B}(j_1,\dots,j_{m-1},i,j_{m+1},\dots,j_M)$.
\begin{eqnarray*}
\mathcal{C}(i_1,\dots,i_{n-1},&&i_{n+1},\dots,i_N,j_1,\dots j_{m-1},j_{m+1},\dots,j_M)= \\
&&\displaystyle\sum_{i=1}^{I_n} \mathcal{A}(i_1,\dots,i_{n-1},i,i_{n+1},\dots,i_N)\mathcal{B}(j_1,\dots,j_{m-1},i,j_{m+1},\dots,j_M).
\end{eqnarray*}
\end{definition}

Finally we define the \emph{unfolding} or matricization, i.e. the process of 
flattening a tensor into a matrix. There are several ways to define it, and
we consider the one used by Oseledets in \cite{oseledets}. Let
 $\mathcal{A}\in\mathbb{R}^{I_1\times I_2\times\dots\times I_N}$, 
then the $k$th unfolding of $\mathcal{A}$ is 
 the matrix $A_{[k]}\in\mathbb{R}^{(I_1I_2\dots I_k)\times(I_{k+1}I_{k+2}\dots I_N)}$, 
with elements %are taken as follows
{
\begin{equation}
 A_{[k]}(\overline{i_1\dots i_k},\overline{i_{k+1}\dots i_N})=\mathcal{A}(i_1,\dots,i_k,i_{k+1},\dots, i_N),
 \label{eq:tt_unfolding}
\end{equation}
where $\overline{i_1\dots i_N}=i_1+(i_2-1)I_1+\ldots+(i_N-1)I_1\cdots I_{N-1}$.}

We can thus define the 
{\it Tensor-Train} decomposition of 
an $N$-way array into a product of third order tensors, called TT-cores.
\begin{definition}
\label{def:tt}
 Given an $N$th order tensor tensor $\mathcal{A}$, its Tensor-Train Decomposition is 
given by the following
 \begin{equation}
\begin{split}
 \mathcal{A}(i_1,\dots,i_N)&=G_1(i_1,:)\mathcal{G}_2(:,i_2,:)\mathcal{G}_3(:,i_3,:)\dots G_N(i_N,:)\\
 &= G_1(i_1)\mathcal{G}_2(i_2)\mathcal{G}_3(i_3)\dots G_N(i_N)
 \label{eq_tt}
 \end{split}
\end{equation}
\end{definition}
Using Definition \ref{def:mchoosen} for the ${m}\choose{n}$-mode product, (\ref{eq_tt}) can be written as 
\begin{equation} \label{eq:tt_nmode}
 \mathcal{A}=G_1\times_2^1\mathcal{G}_2\times_3^1\mathcal{G}_3\times_3^1\dots\times_3^1 G_N.
\end{equation}
If $\mathcal{A}$ is a third-order tensor then the Tensor-Train Decomposition reduces 
to the Tucker decomposition,
%\begin{equation}
$\mathcal{A}=\mathcal{G}_2\times_1 G_1\times_3 G_3^T$.
%\end{equation}
Finally, for general $N$th order tensor, 
the Tensor-Train algorithm requires that the %In the $N$th order setting all the 
matrices $\left(\mathcal{G}_i\right)_{[2]}$,
$i=1,\dots,N-1$  have orthonormal columns.
%requires that particular unfolding of 
%the TT-cores have orthonormal columns}. For example, in the third-order setting, $G_1$ and $\left(\mathcal{G}_2\right)_{[2]}$ have orthonormal columns. 

%\bibliography{bibliografia_jabref}
\end{document}